\theoremstyle{plain}
\newtheorem{thm}{Theorem}[section]
\newtheorem{lem}[thm]{Lemma}
\newtheorem{prop}[thm]{Proposition}
\newtheorem{cor}[thm]{Corollary}
\theoremstyle{definition}
\newtheorem{defn}{Definition}[section]
\theoremstyle{remark}
\newtheorem*{rem*}{Remark}
\newtheorem{rem}[thm]{Remark}
\newcommand{\R}{\mathbb{R}}
\newcommand{\Rd}{{\R^d}}
\newcommand{\Rn}{{\R^n}}
\newcommand{\Sn}{\mathbb{S}^{n-1}}
\newcommand{\Lp}{L^p(\Rd)}
\newcommand{\D}{\mathcal{D}}
\newcommand{\DpL}{\D_p(L)}
\newcommand{\Ex}{{\mathbb{E}_x}}
\newcommand{\E}{\mathcal{E}}
\newcommand{\DEp}{\D(\E_p)}
\newcommand{\intRd}{\int_\Rd}
\newcommand{\dx}{{\rm d}x}
\newcommand{\dy}{{\rm d}y}
\newcommand{\dz}{{\rm d}z}
\newcommand{\dt}{{\rm d}t}
\newcommand{\mP}{P}
\newcommand{\indyk}{\mathbf{1}}
\DeclareMathOperator*\sgn{sgn}
\DeclareMathOperator*{\esssup}{ess\,sup}
\date{\today}
\author[K.~Bogdan]{Krzysztof Bogdan}
\address{Faculty of Pure and Applied Mathematics, Wroc\l{}aw University of Science and Technology, Wyb. Wyspia\'nskiego 27, 50-370 Wroc\l{}aw, Poland.}
\email{krzysztof.bogdan@pwr.edu.pl}
\author[M.~Gutowski]{Micha\l\ Gutowski}
\address{Faculty of Pure and Applied Mathematics, Wroc\l{}aw University of Science and Technology, Wyb. Wyspia\'nskiego 27, 50-370 Wroc\l{}aw, Poland.}\email{michal.gutowski@pwr.edu.pl}
\author[K.~Pietruska-Pa\l uba]{Katarzyna Pietruska-Pa\l uba}
\address{Institute of Mathematics, University of Warsaw, ul. Banacha 2, 02-097 Warsaw, Poland.}
\email{kpp@mimuw.edu.pl}
\thanks{The research was supported by the  NCN grant 2018/31/B/ST1/03818.}
\subjclass[2010]{Primary 46E35; Secondary 31C05}
\keywords{Bregman co-divergence, Markovian semigroup, calculus in $L^p$}
\begin{document}
	\title[Polarized Hardy\nobreakdash--Stein identity]{Polarized Hardy\nobreakdash--Stein identity}

	\begin{abstract} We prove the Hardy\nobreakdash--Stein identity for  vector functions in $L^p(\Rd;\R^n)$ with $1<p<\infty$ and for the canonical paring of two real functions in $L^p(\Rd)$ with $2\le p<\infty$. To this end we propose a notion of Bregman co-divergence and study the corresponding integral forms.
	\end{abstract}\
	\maketitle
	
	\section{Introduction}\label{s.i}
	Hardy\nobreakdash--Stein identity is a disintegration of \textit{energy} 
	of a function
	in an elliptic or parabolic setting.
	The energy 
	is defined as an integral of $|f|^p$ and the
	disintegration is expressed 
	in terms of the corresponding \textit{Bregman divergence}.
	This is exemplified by the following result from Ba\~{n}uelos, Bogdan, and Luks \cite[Theorem~3.2]{MR3556449}:
	If $p>1$ and $f\in\Lp$ then
	\begin{align}
		\label{eq:HS-BBL}
		\intRd |f(x)|^p \,\dx = \int_0^\infty \int_\Rd \int_\Rd F_p(P_tf(x),P_tf(x+y))\, \nu(\dy)\dx\dt.
	\end{align}
	Here $F_p(a,b):=|b|^p-|a|^p-pa|a|^{p-2}(b-a)\ge 0$, $a,b\in \R$, is the Bregman divergence and $(P_t)_{t\geq 0}$ is the convolution semigroup on $\Rd$ generated by symmetric L\'evy measure $\nu$ satisfying the Hartman\nobreakdash--Wintner condition. We can rewrite
 \eqref{eq:HS-BBL} as 
	\begin{align}
		\label{eq:HS}
		\intRd |f(x)|^p \,\dx = p \int_0^\infty \E_p[P_t f] \,\dt,
	\end{align}
	where, for $u\!:\Rd\to \R$, the \textit{Sobolev-Bregman form} is given by 
		\begin{align}\label{eq:ep-1a}
		\E_p [u] := \frac{1}{p} \intRd\intRd F_p(u(x),u(x+y)) \,\nu(\dy)\dx.
		\end{align}	
	Similarly, if $(P_t)_{t\geq 0}$ is the Gaussian semigroup, then, as we verify in  Appendix~\ref{sec:brown},
	\begin{align}
		\label{eq:HSgauss2}
		\intRd |f(x)|^p \,\dx = p(p-1) \int_0^\infty \intRd |P_t f(x)|^{p-2} |\nabla P_t f(x)|^2 \,\dx\dt, 
	\end{align}

	In Theorem~\ref{thm:HS15} below we prove a variant of \eqref{eq:HS-BBL}
	for vector-valued functions\linebreak
 $F\!:\mathbb R^d\to\mathbb R^n$ with  energy $\int_\Rd|F|^p$ and multidimensional Bregman divergence $\mathcal F_p$. We use a novel approach based on differential calculus in $L^p$, but to reduce technicalities, we restrict our attention to L\'evy measures $\nu(\dx)=\nu(x)\dx$, whose density $\nu(x)$ and semigroup $P_t$ satisfy certain relative bounds \eqref{eq:mp} and \eqref{eq:zp}. 
	
	Then, in Theorem~\ref{thm:polarized}, we provide a \textit{polarized Hardy\nobreakdash--Stein identity} for
	$2 \leq p < \infty$ and the \textit{mutual energy} of two scalar functions $f, g \in \Lp$ as follows:
	\begin{equation}\label{eq.HScd}
		\intRd f(x) g(x)|g(x)|^{p - 2}\,\dx=
		p\int_0^\infty \E_p(P_t f,P_t g)\,\dt.
	\end{equation}
	Here, for suitable functions $u, v\!:\Rd\to \R$, we define 
	\begin{equation}\label{eq:polarized}
		\E_p(u,v) := \frac{1}{p} \int_{\mathbb R^d}\int_{\mathbb R^d} \mathcal J_p\big(u(x),v(x);u(y),v(y)\big)\nu(x,y)\,\dx\dy,
	\end{equation}
	which we call the \textit{polarized Sobolev-Bregman form};
	the function $\mathcal J_p$  in \eqref{eq:polarized} will be called the \textit{Bregman co-divergence}, see Section~\ref{sec:polarized}.
	Compared to \eqref{eq:HS-BBL}, the polarized Hardy--Stein identity is delicate because its integrands
	are signed and absolute integrability of the right-hand side of \eqref{eq.HScd} is in question, especially when $2 < p < 3$.

	We see in \eqref{eq:HS} that $p\mathcal E_p[P_tf]$ indeed \textit{disintegrates} the  energy of $f$. 
The formula is closely related to the fact that for nice functions $u$ and Markovian semigroups $(P_t)$ with generator $L$ we have $\tfrac{d}{dt}\int |P_t f|^p=\int p P_tf |P_tf|^{p-2}LP_tf=-p\E_p[P_tf]$, see, e.g., the discussions in Bogdan, Jakubowski, Lenczewska, and Pietruska-Pa\l{}uba \cite{MR4372148} and Bogdan, Grzywny, Pietruska-Pa\l{}uba, and Rutkowski \cite{MR4088505}, or Proposition~\ref{thm:EpDpL} below. 
The connection is known at least since Varopoulos \cite[(1.1)]{MR0803094}; see also Davies \cite[Chapter 2 and 3]{MR1103113}, Langer and Maz'ya \cite{MR1694522}, and Liskevich and Semenov \cite{MR1409835}. Accordingly, as recently highlighted in \cite{MR4372148},
Sobolev--Bregman forms are an efficient tool for estimating Markovian semigroups on $L^p$ spaces.
	For related analysis, we refer to Farkas, Jacob and Schilling \cite[(2.4)]{MR1808433} and to Jacob \cite[(4.294)]{MR1873235}.  
In the same vein, the Bregman divergence defines the energy in Gagliardo\nobreakdash--Nirenberg\nobreakdash--Sobolev inequalities, see Carrillo, J\"{u}ngel, Markowich, Toscani, and Unterreiter \cite[p. 71]{MR1853037}, Bonforte, Dolbeault, Nazaret, and Simonov \cite{2020arXiv200703674B},  and Borowski and Chlebicka \cite{MR4518648}. 
	For references to further applications of Bregman divergence in analysis, statistical learning, and optimization, we refer to Bogdan, Grzywny, Pietruska-Pa{\l}uba, and Rutkowski \cite{MR4589708}; see also Bogdan and Wi\c{e}cek \cite{MR4521651} for a martingale connection.

	Applications of Hardy\nobreakdash--Stein identities include characterizations of Hardy spaces in Bogdan, Dyda, and Luks \cite{MR3251822},
	the Littlewood\nobreakdash--Paley theory and estimates of Fourier multipliers 
	\cite{MR3556449},
	Douglas identities, extension and trace theorems, Dirichlet-to-Neumann maps, and quasiminimizers; see  \cite{MR4088505} and Bogdan, Fafu\l{}a, and Rutkowski \cite{MR4600287}. 	
	On the other hand, to the best of our knowledge, the Bregman co-divergence $\mathcal J_p$, the polarized Sobolev-Bregman form $\E_p(\cdot,\cdot)$, and the polarized Hardy\nobreakdash--Stein identity \eqref{eq.HScd} are new. Apart from being interesting for their own sake, they should be useful for studying the variational problems for $\E_p[\cdot]$.  Let us also note that the convexity of the function $|a|^p$ plays an important role in \eqref{eq:HS-BBL}, but \eqref{eq.HScd} is achieved by writing $a b|b|^{p-2}$ as a difference of two convex functions.

	The structure of the paper is as follows.  The Hardy\nobreakdash--Stein identity \eqref{eq:HSgauss2} for the Gaussian semigroup is verified in Appendix~\ref{sec:brown}.  The result is apparently new, but \cite[Section 4]{MR3251822} and \cite{MR4600287} discuss elliptic analogues. Theorem~\ref{thm:HS15} is proved in Section~\ref{sec:2func}.
Theorem~\ref{thm:polarized}, including \eqref{eq.HScd}, is proved in Section \ref{sec:polarized}.  In Section \ref{sec:form-polarized} we discuss the polarized Sobolev-Bregman form. 
	In Appendix~\ref{sec:App-A} we give auxiliary estimates for Bregman divergence and related quantities.
	Appendix~\ref{sec:App-B} presents elements of calculus in $L^p$, in particular
	Lemma~\ref{lem:otco} gives the derivative of $\int_\Rd P_tf P_tg |P_tg|^{p - 2}\,\dx$. In Appendix~\ref{sec:apx} we discuss convexity properties related to the Bregman co-divergence $\mathcal J_p$. In Appendix~\ref{s.ap} we give a simpler proof of Theorem~\ref{thm:polarized}, but only for $p\ge 3$.
	In Appendix~\ref{sec:app}, complementing Appendix~\ref{sec:brown}, we discuss the $L^p$ generator of the Gaussian semigroup. 
	For technical reasons, our main results are restricted to a class of 
	convolution Markovian semigroups on $\Rd$, but some arguments are presented in a more general setting and further extensions are forthcoming.
	For instance, Gutowski \cite{MR4622410} and Gutowski and Kwaśnicki \cite{gutowski2023beurlingdeny} extend \eqref{eq:HS-BBL}
	to general symmetric Markovian semigroups with nonlocal Dirichlet forms; see also Bogdan, Kutek, and Pietruska-Pałuba \cite{2024+KB-DK-KPP} for a recent probabilistic approach, based on stochastic integrals.
	
 Let us also note that in the case of non-symmetric $\nu$, the identity \eqref{eq:HS-BBL}  was obtained by  Ba\~{n}uelos and Kim \cite{MR3994925}. While our proof
 of \eqref{eq:polarized} heavily depends on the symmetry of $\nu$, we hope that methods similar to those from \cite{MR3994925} or the semimartingale connection from \cite{2024+KB-DK-KPP} may allow for extensions to the non-symmetric setting.
	\medskip
	
	{\bf Acknowledgments.} We thank Włodzimierz B\c{a}k, Bartłomiej Dyda, Mateusz Kwa\'{s}nicki,  Agnieszka Ka\l{}amajska, and Bart\l{}omiej Wr\'{o}bel for helpful discussions.

	\section{Preliminaries}
	\label{sec:int}
	
	\subsection{Standing assumptions}\label{ss.a}	
	\smallskip
	
	All the considered sets, functions, and measures are assumed Borel. For nonnegative functions $f$ and $g$, we write $f(x) \asymp g(x)$ to indicate that there exist \textit{constants}, i.e., numbers $0 < c \leq C < \infty$ such that $cf(x) \le g(x) \le Cf(x)$ for all the considered arguments $x$. Without warning, the symbols $c$ or $C$ may denote different constants
	even within a single line of text. The symbol $:=$ means definition, e.g., $a\vee b:=\max\{a,b\}$, $a\wedge b:=\min\{a,b\}$,
	$a_+:=a\vee0$, and
	$a_-:=(-a)\vee0$.
	We denote the Euclidean norm of a vector $z\in\R^n$ as $|z|$ and the standard scalar product of vectors $w$ and $z$ in $\R^n$ as $(w,z)$ or $w\cdot z$.
	The unit sphere in $\Rn$ centered at the origin is denoted by $\Sn$. As usual, $\|f\|_{L^q(\Rd)}$ denotes the $L^q(\Rd)$ norm of the (extended real-valued) function $f$, $1\le q\le \infty$. More specifically, 	$\|f\|_{L^q(\Rd)} :=  \big(\int_{\Rd} |f(x)|^q \,\dx)^{1/q}$ for $1\le q< \infty$, where ${\rm d}x$ refers to the Lebesgue measure on $\Rd$ and $\|f\|_{L^\infty(\Rd)} := \esssup |f|$.
	
	Let $d=1, 2, \ldots$.
	Consider a symmetric, absolutely continuous L\'evy measure $\nu$  on the Euclidean space $\Rd$. Thus, $\nu(\dz)=\nu(z)\,\dz$, where $\nu\!:\R^d\setminus\{0\}\to(0,\infty)$,   $\nu(-z)=\nu(z)$, $z\in \mathbb R^d\setminus\{0\}$, and
	\[ \int_{\Rd}\left(|z|^2\wedge 1\right)\nu(z)\,\dz <\infty.\]
	The corresponding L\'evy\nobreakdash--Khinchine exponent is
	\begin{equation}
		\label{eq:Lexponent}
		\psi(\xi):= \intRd \left(1-\cos(\xi\cdot x)\right)\nu(x)\,\dx,\quad \xi\in \Rd.
	\end{equation}
	We further assume the following Hartman\nobreakdash--Wintner condition on $\psi$ (and $\nu$):
	\begin{equation}
		\label{eq:HaWi}
		\lim_{|\xi|\to \infty}
		\frac{\psi(\xi)}{\log|\xi|}=\infty.
	\end{equation}
	In particular, $\int_{\Rd}\nu(z)\,{\rm d}z=\infty$.  
	This gives rise to a convolution semigroup of probability densities $p_t$ given by the L\'evy\nobreakdash--Khintchine formula, or Fourier inversion, as follows:
	\begin{equation}
		\label{eq:Fi}
		p_t(x):=(2\pi)^{-d}\intRd e^{-i\xi\cdot x}e^{-t\psi(\xi)}\,{\rm d}\xi,\quad t>0, \ x\in \Rd.
	\end{equation}
	The function $p_t(x)$ is continuous and attains its maximum at $x=0$, which is
	\begin{align*}
		p_t(0) & = (2\pi)^{-d}\intRd e^{-t \psi(\xi)} \,{\rm d}\xi,\quad t>0.
	\end{align*}
	By \eqref{eq:HaWi}, $p_t(0)$ is finite for every $t>0$ and, by the Dominated Convergence Theorem, $p_t(0)$ converges to zero as $t \to \infty$, so $\|p_t\|_{L^\infty(\Rd)}\to 0$.

	We shall also write $p_t(x,y):=p_t(y-x)$ and $\nu(x,y):=\nu(y-x)$. Note that $p_t(x,y)$ is a transition density of a pure-jump L\'evy stochastic process $\{ X_t, t\geq0\}$ in $\Rd$ with the L\'evy\nobreakdash--Khintchine exponent $\psi$ (see Sato \cite{MR1739520}) and $\nu(x,y)$ is the kernel of the corresponding Dirichlet form; see, e.g., Fukushima, Oshima, and Takeda \cite{MR2778606} and \eqref{eq:E2}. (In the following discussion, the process does not play a significant role.)
	Encouraged by \cite{MR4372148}, we also assume the following technical conditions:
	\begin{equation}
		\phantom{aaa}\tag{\bf{P1}}\label{eq:mp}
		p_t(x,y)/t \leq c \nu(x,y), \quad t>0,\ x,y\in\Rd,
	\end{equation}
	with some constant $c$, and
	\begin{equation}
		\phantom{aaa}\tag{\bf{P2}}\label{eq:zp}
		p_t(x,y)/t \rightarrow \nu(x,y) \mbox{ as } t \rightarrow 0^+,\ x,y\in\Rd.
	\end{equation}
For instance, the transition density corresponding to the fractional Laplacian satisfies \eqref{eq:mp} and \eqref{eq:zp}; see  examples provided by Bogdan, Grzywny, and Ryznar \cite[Corollary 23]{MR3165234} and Cygan, Grzywny, and Trojan \cite[Proof of Theorem 6]{MR3646773}. The conditions are convenient in limiting procedures based on the Dominated Convergence Theorem. They can probably be relaxed, as in the (nonpolarized) Hardy--Stein identities given by \cite{2024+KB-DK-KPP}, \cite{MR4622410},  \cite{gutowski2023beurlingdeny}. See also Appendix~\ref{sec:brown} below, where we give the (nonpolarized) Hardy--Stein identity for the Brownian motion. In this case, $p_t(x,y)/t\to 0$ when $t\to 0^+$ and $x\neq y$.
Based on \cite{gutowski2023beurlingdeny}, however, we anticipate that relaxing \eqref{eq:mp} and \eqref{eq:zp} will significantly complicate the arguments.

	\subsection{Elementary functions and inequalities}
	
	Throughout we use the notation
	\begin{align*}
		a^{\langle \kappa \rangle} := \left|a \right|^\kappa \sgn 	 a= a|a|^{\kappa-1},\quad a, \kappa \in \R,
	\end{align*}
	where $0^{\langle \kappa \rangle}:=0$ and, as usual, $\sgn 0=0$,
	$0^0:=1$, $0^{\kappa}:=\infty$ for $\kappa<0$, and $0\cdot\infty:=0$.
	Note that
	\begin{align*}
		\left(
		|x|^\kappa
		\right)' = \kappa x^{\langle \kappa-1 \rangle}
		\quad
		\mbox{if }
		x\in\R\mbox{ and }\kappa>1
		\mbox{ or }
		x\in\R\!\setminus\!\{0\}\mbox{ and }\kappa\in\R.
	\end{align*}
	Furthermore,
	\begin{align*}
		\left(
		x^{\langle \kappa \rangle}
		\right)' = \kappa |x|^{\kappa-1 }
		\quad
		\mbox{if }
		x\in\R\mbox{ and }\kappa>1
		\mbox{ or }
		x\in\R\!\setminus\!\{0\}\mbox{ and }\kappa\in\R.
	\end{align*}
	This has a vector counterpart: for $\kappa>0$, we let
	\begin{equation}\label{eq:z-kappa-vect}
		z^{\langle\kappa\rangle}:= |z|^{\kappa-1}z,\quad z\in\Rn,
	\end{equation}
	again with the convention $0^{\langle \kappa \rangle}:=0$.
	Note that
	\begin{align}
		\label{eq:Nabla|z|^p}
		\nabla|z|^\kappa = \kappa z^{\langle\kappa-1\rangle}
		\quad
		\mbox{if }
		z\in\Rn \mbox{ and } \kappa>1
		\mbox{ or }
		z\in\Rn\!\setminus\!\{0\} \mbox{ and }  \kappa\in\R.
	\end{align}
	Furthermore, the Jacobi matrix $J_{\langle\kappa\rangle}$ for the mapping 
	$z\mapsto z^{\langle \kappa \rangle}$
	equals
	\begin{equation}\label{eq:diff-french-n}
		J_{\langle\kappa\rangle}(z) = |z|^{\kappa-1}\left((\kappa-1)\left( \frac{z}{|z|}\otimes \frac{z}{|z|}\right) + \mbox{Id}\right) \in \Rn\times \Rn\quad\mbox{ if }z\in\Rn\!\setminus\!\{0\}
	\end{equation}
	and we let $J_{\langle\kappa\rangle}(0):=0$.
	
	In the following, unless otherwise specified, we consider exponents $p\in (1,\infty)$.
	
	\begin{defn}\label{def:bregman}
		The \emph{Bregman divergence}
		$\mathcal F_p\!:\R^n\times\R^n\to(0,\infty)$ is given  by
		\begin{align}
			\label{eq:Fp}
			\mathcal F_p(w,z)
			:=
			|z|^p-|w|^p - pw^{\langle p - 1 \rangle}\cdot(z-w)
		\end{align}
		and the \emph{symmetrized Bregman divergence} is
		\begin{align}
			\label{eq:Hp}
			\mathcal H_p(w,z)
			:=
			\frac{1}{2}
			\left(
			\mathcal F_p(w,z)+\mathcal F_p(z,w)
			\right)
			=
			\frac{p}{2}
			(z-w)\cdot
			\left(
			z^{\langle p - 1 \rangle} - w^{\langle p - 1 \rangle}
			\right).
		\end{align}
	\end{defn}
	For instance,
	\begin{align}
		\label{eq:F2}
		\mathcal F_2(w,z)=\mathcal H_2(w,z)=|z-w|^2,\quad w,z\in \R^n.
	\end{align}
	Note that $\mathcal F_p(w,z)=|z|^p$ if $w=0$, but $\mathcal F_p(w,z)=(p-1)|w|^p$ if $z=0$.
	Of course, the mapping $\R^n\ni z\mapsto |z|^p$ is convex, since $p>1$. Its second-order Taylor remainder is $\mathcal F_p$, so $\mathcal F_p\ge 0$ and $\mathcal H_p\ge 0$.
	Also, if $Q$ is an $n\times n$ orthogonal matrix, then
	\begin{align}
		\label{eq:Fpsym}
		\mathcal{F}_p(Qw,Qz) = \mathcal{F}_p(w,z).
	\end{align}
	For notational convenience in what follows, we also let
	\begin{equation}\label{eq:gp-vect}
		\mathcal G_p(w,z) := |z-w|^2 (|w|\vee |z|)^{p-2}, \quad z,w\in\Rn.
	\end{equation}
	We further introduce the second-order Taylor remainders of the vector functions \linebreak $\!\R^n\!\ni\! z\mapsto z^{\langle \kappa \rangle}\in\R^n$ for  $\kappa >1$.  \!\!
	More precisely, we define $\mathcal F_{\langle\kappa\rangle}\!:\Rn\times\Rn\to \R^n$ by
	\[\mathcal F_{\langle \kappa \rangle}(w,z):=z^{\langle \kappa\rangle}- w^{\langle \kappa \rangle} - J_{\langle\kappa\rangle}(w)(z-w),\;\quad w,z\in\mathbb R^n.\]
	Of course, the mapping $\R\ni x\mapsto x^{\langle\kappa\rangle} \in\R$ is in general not convex and $\mathcal F_{\langle \kappa \rangle}$ changes sign. In fact,
	$\mathcal F_{\langle\kappa\rangle}(-a,-b) = -{\mathcal F}_{\langle\kappa\rangle}(a,b)$.
	
	The scalar versions of the above functions (for $n=1$)  are denoted $F_\kappa$ (see Introduction), $H_\kappa$, $G_\kappa$, and $F_{\langle\kappa\rangle}$, 
respectively.
	In particular, 
	\[F_{\langle\kappa\rangle}(a,b)= b^{\langle \kappa\rangle}- a^{\langle \kappa\rangle}-
	\kappa |a|^{\kappa-1}(b-a),\quad a,b\in\R.\]
	The following estimates are quite important for our analysis.
	\begin{lem}
		\label{lem:Fp2Gp2}
		Let $p\in(1,\infty)$. We have
		\begin{align}
			\label{eq:Fp2Gp2}
			\mathcal{F}_p(w,z) \asymp \mathcal{G}_p(w,z),\quad w,z\in\R^n,
		\end{align}
		and
		\begin{align}
			\label{eq:Hp2Gp2}
			\mathcal{H}_p(w,z) \asymp \mathcal{G}_p(w,z),\quad w,z\in\R^n.
		\end{align}
	\end{lem}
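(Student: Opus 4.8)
\medskip
\noindent\textbf{Proof plan.} The plan is to use positive homogeneity to reduce both comparisons to a compact set, and there to treat the region away from the diagonal $\{w=z\}$ by continuity and compactness, and the region near the diagonal by a second-order Taylor expansion. Both $\mathcal F_p$ and $\mathcal G_p$ are nonnegative, vanish on the diagonal, and for $\lambda>0$ satisfy $\mathcal F_p(\lambda w,\lambda z)=\lambda^p\mathcal F_p(w,z)$ and $\mathcal G_p(\lambda w,\lambda z)=\lambda^p\mathcal G_p(w,z)$; hence \eqref{eq:Fp2Gp2} is trivial for $w=z$, and for $w\ne z$ we may divide both vectors by $M:=|w|\vee|z|>0$. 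So it suffices to show that $\Phi(w,z):=\mathcal F_p(w,z)/|z-w|^2$ is bounded between two positive constants (depending only on $p$ and $n$) on $K\setminus\{w=z\}$, where $K:=\{(w,z)\in\Rn\times\Rn:\ |w|\vee|z|=1\}$ is compact and $\mathcal G_p(w,z)=|z-w|^2$ on $K$.

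Fix $\varepsilon\in(0,1/8)$. On the compact set $K_\varepsilon:=\{(w,z)\in K:\ |z-w|\ge\varepsilon\}$ the function $\mathcal F_p$ is continuous ($w^{\langle p-1\rangle}\to0$ as $w\to0$, since $p>1$) and strictly positive: $\mathcal F_p(w,z)=|z|^p-|w|^p-pw^{\langle p-1\rangle}\cdot(z-w)$ is the gap between $\Rn\ni z\mapsto|z|^p$ and its tangent hyperplane at $w$, and this is strictly positive for $z\ne w$ because $|\cdot|^p$ is strictly convex for $p>1$. Hence $\Phi\asymp1$ on $K_\varepsilon$.

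For the remaining points of $K$, i.e. $(w,z)\in K$ with $0<|z-w|<\varepsilon$, the constraint $|w|\vee|z|=1$ forces the entire segment $u=w+s(z-w)$, $s\in[0,1]$, into the annulus $A:=\{3/4\le|u|\le5/4\}$, on which $z\mapsto|z|^p$ is smooth with Hessian $H(u)=p|u|^{p-2}(\mbox{Id}+(p-2)\tfrac{u}{|u|}\otimes\tfrac{u}{|u|})$, positive definite with eigenvalues $p|u|^{p-2}$ and $p(p-1)|u|^{p-2}$; hence $(z-w)\cdot H(u)(z-w)\asymp|z-w|^2$ uniformly for $u\in A$. By Taylor's theorem with integral remainder,
\[
\mathcal F_p(w,z)=\int_0^1(1-s)\,(z-w)\cdot H\big(w+s(z-w)\big)(z-w)\,{\rm d}s\asymp|z-w|^2 ,
\]
so $\Phi\asymp1$ here too, which proves \eqref{eq:Fp2Gp2}. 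Then \eqref{eq:Hp2Gp2} is immediate: by \eqref{eq:Hp}, $\mathcal H_p(w,z)=\tfrac12(\mathcal F_p(w,z)+\mathcal F_p(z,w))$, while $\mathcal G_p$ is symmetric, so applying \eqref{eq:Fp2Gp2} to both terms gives $\mathcal H_p\asymp\mathcal G_p$.

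The step I expect to be the main obstacle is the passage near the diagonal: $K\setminus\{w=z\}$ is not compact and $\Phi$ does not extend continuously to the diagonal — its limiting values depend on the direction of $z-w$ through the Hessian quadratic form — so compactness alone does not suffice and the Taylor estimate is genuinely needed. A minor point to watch is that for $p\ne2$ the Hessian $H(u)$ blows up or degenerates as $u\to0$, but this is harmless here because near the diagonal of $K$ one has $|w|=1$, so all base points stay in the fixed annulus $A$, away from the origin. (Alternatively, one can bypass compactness and derive \eqref{eq:Fp2Gp2} from the displayed integral-remainder formula together with the elementary bound $\int_0^1(1-s)|w+s(z-w)|^{p-2}\,{\rm d}s\asymp(|w|\vee|z|)^{p-2}$.)
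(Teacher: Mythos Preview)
Your proof is correct. The homogeneity reduction to the compact set $K=\{|w|\vee|z|=1\}$, followed by the split into the far region (handled by compactness and strict convexity of $|\cdot|^p$) and the near-diagonal region (handled by the integral Taylor remainder with the explicit Hessian, uniformly positive definite on the fixed annulus), goes through cleanly, and your derivation of \eqref{eq:Hp2Gp2} from \eqref{eq:Fp2Gp2} via the symmetry of $\mathcal G_p$ is exactly what the paper does.

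Note, however, that the paper does not actually give its own proof of \eqref{eq:Fp2Gp2}: it attributes the result to Pinchover--Tertikas--Tintarev \cite[(2.19)]{MR2400106}, with partial earlier versions in Shafrir \cite{MR1759788} and Barbatis--Filippas--Tertikas \cite{MR2048514}, and only remarks that \eqref{eq:Hp2Gp2} follows immediately. So your argument is a genuine self-contained proof rather than a reproduction of the paper's. Compared with the literature proofs, which typically proceed by explicit one-dimensional calculus along the segment and case distinctions in $p$, your compactness-plus-Taylor route is more conceptual and avoids case analysis; the cost is that the implied constants are not explicit, whereas some of the cited works track optimal constants in certain ranges of $p$. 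Your parenthetical alternative---using the integral remainder globally together with $\int_0^1(1-s)|w+s(z-w)|^{p-2}\,{\rm d}s\asymp(|w|\vee|z|)^{p-2}$---is closer in spirit to those direct arguments.
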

	Of, course \eqref{eq:Hp2Gp2} follows from \eqref{eq:Fp2Gp2}. 
	It seems that \eqref{eq:Fp2Gp2}	was first proved in Pinchover, Tertikas, and Tintarev \cite[(2.19)]{MR2400106}, but one of the  one-sided bounds was given earlier in Shafrir \cite[Lemma 7.4]{MR1759788} for $p\ge 2$ and the other in Barbatis, Filippas, and Tertikas \cite[Lemma~3.1]{MR2048514}. The one-dimensional case, $F_p(a,b) \asymp (b-a)^2(|a|\vee |b|)^{p-2}$, $a,b\in \R$, is crucial in \cite[Lemma 6]{MR3251822}, with \cite[(10), (12)]{MR3251822} therein being a part of the comparison for $n=2$ (the setting of \eqref{eq:Fp2Gp2} is essentially two-dimensional). Optimal constants are known in some cases: for the lower bound of $F_p$ with $p\in (1,2)$ and for the upper bound with $p\in (2,\infty)$; see \cite{MR4372148} and \cite[Lemma 7.4]{MR1759788}.
	The quadratic factor in \eqref{eq:gp-vect} is the reason why Bregman divergence $\mathcal{F}_p$ is integrable against L\'evy measures in \eqref{eq:ep-1a}, which is crucial in analysis of nonlocal equations of parabolic and elliptic type; see the applications of \cite[(2.14)]{MR4589708} therein. See also \cite{MR4372148} for a martingale setting.  In passing we note another important estimate:
	\begin{equation}\label{e.cHk}
		\mathcal F_p(w,z)\asymp \left|z^{\langle p/2\rangle}-w^{\langle p/2\rangle}\right|^2, \quad w,z\in \R^n.
	\end{equation}
	We refer to \cite[Subsection 1.3]{MR4372148} for a discussion of the estimate when $n=1$; the case of arbitrary $n= 1,2,\ldots$ can be found in Huang \cite{MR1923834}.
	Further estimates concerning functions $\mathcal F_p$, $\mathcal F_{\langle \kappa\rangle}$, and their cousins 
are collected and proved in  Appendix~\ref{sec:App-A}.

	\subsection{Semigroups, generators, and forms}
	The semigroup is defined by 
	\begin{align*}
		P_t f(x):=\intRd f(y) p_t(x,y) \,\dy,\quad t>0,
	\end{align*}
	and by $P_0 f(x) := f(x)$, where $x\in\Rd$ and $f\!:\Rd\to \R$ is nonnegative or integrable. We briefly mention a well known probability connection:
	$P_t f(x)=\Ex f(X_t)$, where $(X_t,\mathbb P_x)_{t\geq 0,x\in \mathbb R^d}$ is our L\'{e}vy process, considered as a Markov process  with transition density on $\Rd$ given by $p_t(\cdot,\cdot)$, and $\mathbb E_x$ is the expectation with respect to the distribution $\mathbb P_x$ of the process starting from $x$.
	
	Since $\int_{\mathbb R^d} p_t(x,y)\,\dy =1$ for $t>0$,  $x\in \mathbb R^d$ (conservativeness of the semigroup $P_t$),  
and $p_t$ is symmetric  in $x,y,$ Fubini-Tonelli theorem yields
	\begin{align}\label{eq:intPtf}
		\intRd P_t f(x)\,\dx = \intRd f(x)\,\dx.
	\end{align}

	Recall that $1<p<\infty$. It is well known that $(P_t)_{t\geq 0}$ is a strongly continuous Markovian semigroup of symmetric operators on $\Lp$; see for example \cite[E 34.10]{MR1739520}. For all $x\in\Rd$ and $f\in\Lp$, by \eqref{eq:Fi} and H\"{o}lder's inequality with exponents $p$ and $q=p/(p-1)$, we get
	\begin{align}
		\label{eq:Ptf}
		\left| P_t f (x) \right|
		&=
		\left|\intRd f(y)p_t(x,y)\,\dy\right|
		\leq
		\|f\|_{\Lp}\left(\intRd p_t(x,y)^q \,\dy\right)^{1/q}
		\\ \nonumber
		&\leq
		\|f\|_{\Lp}\left( \sup_{x,y\in\Rd} p_t(x,y)^{q-1} \right)^{1/q}
		=
		\|f\|_{\Lp} \,\|p_t\|_{L^\infty(\Rd)}^{1/p}
		\xrightarrow[t\to\infty]{}
		0.
	\end{align}

	We also need the following maximal inequality of Stein for symmetric Markovian semigroups; see Stein \cite[p. 73]{MR0252961} and recall that $1<p<\infty$.
	\begin{lem}
		[Stein inequality]
		\label{lem:stein}
		If $f\in \Lp$, 
		$f^*(x):=\sup_{t\ge0}|P_tf(x)|$, $x\in \Rd$,
		then,
		\begin{equation}
			\label{eq:stein}
			\|f^{*}\|_{\Lp}\leq \frac{p}{p-1}\|f\|_{\Lp}.
		\end{equation}
	\end{lem}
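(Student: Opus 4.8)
This is the classical Stein maximal theorem for symmetric Markovian semigroups; the plan is to derive it---with the sharp constant $q:=p/(p-1)$---via a martingale dilation and Doob's inequality. First I would dispose of measurability and of the continuous-time supremum. By \eqref{eq:Fi} and \eqref{eq:HaWi} the function $e^{-t\psi}$ is integrable, so $p_t\in C_0(\Rd)$; hence $P_{t_0/2}f$ is bounded and uniformly continuous for $f\in\Lp$ and $t_0>0$, and $t\mapsto P_tf(x)=P_{t-t_0/2}(P_{t_0/2}f)(x)$ is continuous on $(0,\infty)$ for every $x$, while $P_0f=f$. Thus $f^*(x)=|f(x)|\vee\sup_{t\in\mathbb Q_{>0}}|P_tf(x)|$ is measurable, and it suffices to bound $\sup_{t\in D}|P_tf|$ in $\Lp$ uniformly over finite $D\subset[0,\infty)$, since letting $D\uparrow[0,\infty)$ recovers $f^*$ by monotone convergence. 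Finally, by the semigroup law, $\{2t:t\ge0\}=[0,\infty)$ gives $f^*=\sup_{t\ge0}|P_{2t}f|$, so it is enough to work with the operators $P_{2t}$.

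The core step is Rota's dilation theorem in its semigroup form. Since $(P_t)_{t\ge0}$ is a strongly continuous semigroup of symmetric, positivity-preserving, conservative contractions on $L^2(\Rd)$ (and on $L^1$ and $L^\infty$), there exist a $\sigma$-finite measure space $(\Omega,\mathcal M,m)$, an isometric lattice embedding $\pi\!:L^r(\Rd)\hookrightarrow L^r(\Omega,m)$ for every $r\in[1,\infty]$, a sub-$\sigma$-algebra $\mathcal B\subseteq\mathcal M$ with $\pi(\Lp)\subseteq L^p(\mathcal B)$, and a decreasing family $(\mathcal M_t)_{t\ge0}$ of sub-$\sigma$-algebras with $\mathcal M_0=\mathcal M$, such that
\[
\pi(P_{2t}f)=\mathbb E\big[\,\mathbb E[\pi f\mid\mathcal M_t]\ \big|\ \mathcal B\,\big],\qquad t\ge0,\ f\in\Lp.
\]
Because $(\mathcal M_t)$ decreases, $t\mapsto Y_t:=\mathbb E[\pi f\mid\mathcal M_t]$ is a reverse martingale with terminal value $Y_0=\pi f$, so Doob's $L^p$ maximal inequality (applied on each segment $[0,N]$ after reversing time, then letting $N\to\infty$) yields $\big\|\sup_{t\ge0}|Y_t|\big\|_{L^p(m)}\le q\,\|\pi f\|_{L^p(m)}=q\,\|f\|_{\Lp}$. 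Since $|\pi(P_{2t}f)|=|\mathbb E[Y_t\mid\mathcal B]|\le\mathbb E\big[\sup_{s\ge0}|Y_s|\mid\mathcal B\big]$ for every $t$, and $\mathbb E[\cdot\mid\mathcal B]$ is an $L^p(m)$-contraction,
\[
\big\|\sup_{t\ge0}|\pi(P_{2t}f)|\big\|_{L^p(m)}\le\big\|\sup_{s\ge0}|Y_s|\big\|_{L^p(m)}\le q\,\|f\|_{\Lp}.
\]
As $\pi$ is an isometric lattice embedding, the left-hand side equals $\big\|\sup_{t\ge0}|P_{2t}f|\big\|_{\Lp}=\|f^*\|_{\Lp}$, which is the desired bound.

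I expect the main obstacle to be precisely the dilation step: one must produce a \emph{single} decreasing filtration $(\mathcal M_t)$ that simultaneously represents \emph{all} of the operators $P_{2t}$, and one must verify the conditional-expectation identity together with the $L^p(\Omega,m)$-contractivity of the conditional expectations involved in the $\sigma$-finite (non-probability) setting; this is exactly the content of the semigroup version of Rota's theorem for symmetric Markovian semigroups, and the reversibility of the underlying L\'evy process with respect to Lebesgue measure is what makes such a backward martingale structure available. It is also this step that forces the martingale machinery: if one is content with a non-sharp constant $C_p$ in place of $p/(p-1)$, it can be bypassed, since the Hopf--Dunford--Schwartz ergodic maximal theorem bounds $\sup_{t>0}\tfrac1t\big|\int_0^tP_sf\,\dt\big|$ on $\Lp$, and the identity $P_tf=\tfrac1t\int_0^tP_sf\,ds-\tfrac1t\int_0^t sP_s(Af)\,ds$ (with $A$ the $L^p$ generator of $(P_t)$) controls the remainder by the Littlewood--Paley $g$-function, itself $\Lp$-bounded for symmetric Markovian semigroups---at the cost of the optimal constant.
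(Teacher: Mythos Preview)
The paper does not prove this lemma; it simply cites Stein's monograph \cite[p.~73]{MR0252961} and uses the inequality as a black box. Your outline is essentially the proof given there: Rota's dilation theorem produces a decreasing filtration on a larger measure space so that $P_{2t}f$ is represented as a conditional expectation of the reverse martingale $Y_t=\mathbb E[\pi f\mid\mathcal M_t]$, and Doob's $L^p$ maximal inequality for the reverse martingale delivers the sharp constant $p/(p-1)$. So your approach is correct and coincides with the source the paper invokes.

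Two minor comments. First, your measurability reduction is a bit more elaborate than needed: once you restrict to finite $D\subset[0,\infty)$ the lattice-embedding property of $\pi$ already pushes the supremum through, and monotone convergence finishes the job; the continuity of $t\mapsto P_tf(x)$ is not really required. Second, your closing paragraph about bypassing Rota via Hopf--Dunford--Schwartz plus a $g$-function remainder is a nice aside, but note that the identity you wrote requires $f\in\DpL$, so that route would need an additional density argument to reach all of $\Lp$; since the paper only uses \eqref{eq:stein} as a tool and never revisits its proof, this alternative is not needed here.
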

	By \eqref{eq:stein} and 
	\eqref{eq:Ptf},
	the semigroup is \textit{strongly stable} in $L^p(\Rd)$: If $f\in L^p(\Rd)$, then
	\begin{equation}
		\label{Pt-strongly-stable}
		\|P_t f\|_{\Lp}\to 0 \mbox{ as } t\to\infty.
	\end{equation}
	Indeed, since for every $x\in\Rd$ we have $|P_tf(x)|\to 0$ and $|P_tf(x)|\leq f^*(x)$   with $f^*\in L^p(\Rd)$,   we get
	$\|P_tf\|_{\Lp} 
	\to 0$
	by the Dominated Convergence Theorem.

	Let $L$ be the generator of the semigroup $(P_t)_{t\geq 0}$, when acting on $\Lp$. Its natural domain, denoted $\DpL$, consists of those $f\in \Lp$ for which there is $g\in \Lp$ such that $(P_hf-f)/h\to g$ in $\Lp$ as $h\to 0^+$; we then write $Lf=g$.

	We next discuss issues related to the $L^p$-differentiability of semigroups. (To make the exposition self-contained, we include a primer on the $L^p$ calculus in Appendix~\ref{sec:App-B}.)
	Thus, for $f\in\Lp$ and $t\ge 0$, we write $u(t) := P_tf$. Of course, $u(t)\in\Lp$.
	Furthermore, if $f\in\DpL$ then $u'(t) = L P_t f = P_t L f=Lu(t)$, $t\geq0$.
	By Lemma~\ref{thm:difpow} with $n=1$,  we obtain the following result.
	\begin{cor}
		\label{lem:pdif}
		Let $f\in\DpL$.
		If $1<\kappa\le p$ then:
		\begin{enumerate}
			\item[\bf (i)]
			$|u(t)|^\kappa$ is continuously differentiable in $L^{p/\kappa}(\Rd)$ and
			\begin{equation}
				\label{eq:ppp}
				(|u(t)|^\kappa)' =\kappa u(t)^{\langle \kappa - 1 \rangle} u'(t)= \kappa u(t)^{\langle \kappa - 1 \rangle} P_t Lf, \quad t \geq 0,
			\end{equation}
			\item[\bf (ii)]  $u^{\langle \kappa \rangle}$ is continuously differentiable in $L^{p/\kappa}(\Rd)$ and
			\begin{align}
				(u(t)^{\langle \kappa \rangle})' = \kappa|u(t)|^{\kappa-1} u'(t)
				=
				\kappa|u(t)|^{\kappa-1} P_t L f,     \quad t \ge 0.
			\end{align}
		\end{enumerate}
	\end{cor}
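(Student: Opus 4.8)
The plan is to read off both items from the general differentiation rule for powers along an $\Lp$-differentiable curve, namely Lemma~\ref{thm:difpow} specialized to $n=1$, applied to the orbit $t\mapsto u(t)=P_tf$. The only point to check before invoking it is that this orbit is continuously differentiable in $\Lp$: since $f\in\DpL$, standard $C_0$-semigroup theory gives $P_hf-f=\int_0^h P_sLf\,{\rm d}s$, whence, using that $P_t$ commutes with $L$ on $\DpL$ and that $s\mapsto P_sLf$ is $\Lp$-continuous (strong continuity applied to the vector $Lf$), we get that $u$ is $C^1$ in $\Lp$ with $u'(t)=P_tLf=Lu(t)$, exactly as recorded just above the corollary. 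The pointwise identities $(|x|^\kappa)'=\kappa x^{\langle\kappa-1\rangle}$ and $(x^{\langle\kappa\rangle})'=\kappa|x|^{\kappa-1}$ for $\kappa>1$ then single out the only possible $L^{p/\kappa}(\Rd)$-derivatives, $\kappa u(t)^{\langle\kappa-1\rangle}u'(t)$ and $\kappa|u(t)|^{\kappa-1}u'(t)$; these belong to $L^{p/\kappa}(\Rd)$ because $u(t)^{\langle\kappa-1\rangle},|u(t)|^{\kappa-1}\in L^{p/(\kappa-1)}(\Rd)$ and $u'(t)\in\Lp$, with $\kappa/p=(\kappa-1)/p+1/p$. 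The hypothesis $1<\kappa\le p$ is used precisely to keep $p/\kappa$ and $p/(\kappa-1)$ in $[1,\infty)$, so that all the spaces in play are genuine Banach spaces.

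For completeness I indicate where the actual work in Lemma~\ref{thm:difpow} lies, treating {\bf(i)}; {\bf(ii)} is identical with $F_\kappa$ replaced by $F_{\langle\kappa\rangle}$, which obeys an estimate of the same shape (Appendix~\ref{sec:App-A}). Fixing $t$ and abbreviating $v=u(t)$, $w=u(t+h)$, one writes
\[
\frac{|w|^\kappa-|v|^\kappa}{h}-\kappa v^{\langle\kappa-1\rangle}u'(t)
=\kappa v^{\langle\kappa-1\rangle}\!\left(\frac{w-v}{h}-u'(t)\right)+\frac{F_\kappa(v,w)}{h}.
\]
The first term tends to $0$ in $L^{p/\kappa}(\Rd)$ by Hölder, since $v^{\langle\kappa-1\rangle}\in L^{p/(\kappa-1)}(\Rd)$ is fixed and $(w-v)/h\to u'(t)$ in $\Lp$. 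For the remainder one uses $|F_\kappa(a,b)|\le C\,(b-a)^2(|a|\vee|b|)^{\kappa-2}$ (the scalar case of \eqref{eq:Fp2Gp2}): when $\kappa\ge 2$, generalized Hölder with exponents $p/2$ and $p/(\kappa-2)$ gives $\|F_\kappa(v,w)\|_{L^{p/\kappa}}\le C\|w-v\|_{\Lp}^2\,\| |v|\vee|w| \|_{\Lp}^{\kappa-2}=O(h^2)$, while for $1<\kappa\le 2$ the elementary inequality $(b-a)^2(|a|\vee|b|)^{\kappa-2}\le C|b-a|^\kappa$ yields $\|F_\kappa(v,w)\|_{L^{p/\kappa}}\le C\|w-v\|_{\Lp}^\kappa=O(h^\kappa)$; in both cases this is $o(h)$ because $\kappa>1$. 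Finally, $t\mapsto\kappa u(t)^{\langle\kappa-1\rangle}u'(t)$ is continuous into $L^{p/\kappa}(\Rd)$: $t\mapsto u(t)$ is $\Lp$-continuous, $t\mapsto u'(t)=P_tLf$ is $\Lp$-continuous by strong continuity, and the Nemytskii map $\Lp\ni h\mapsto h^{\langle\kappa-1\rangle}\in L^{p/(\kappa-1)}(\Rd)$ is continuous, via the pointwise bound $|b^{\langle\kappa-1\rangle}-a^{\langle\kappa-1\rangle}|\le C\,(|b-a|^{\kappa-1}\wedge|b-a|(|a|\vee|b|)^{\kappa-2})$ together with Hölder (and, if one wants full rigor, a subsequence-and-dominated-convergence argument).

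The main obstacle is the remainder estimate in the range $1<\kappa<2$: there $x\mapsto|x|^\kappa$ is only $C^1$, not $C^2$, so the naive ``remainder $\lesssim$ second derivative $\times\,h^2$'' bound is unavailable and one must instead combine the two competing bounds for $F_\kappa$ and exploit $\kappa>1$ to still obtain $o(h)$; the same point recurs in proving continuity of the relevant Nemytskii map into $L^{p/(\kappa-1)}(\Rd)$. Everything else — commuting $L$ with $P_t$, the Hölder bookkeeping of exponents, and strong continuity of $(P_t)$ — is routine. Given Lemma~\ref{thm:difpow}, the corollary is then immediate by the specialization $n=1$ and the substitution $v\mapsto u(t)$, $v'\mapsto P_tLf$, which gives \eqref{eq:ppp} and the formula in {\bf(ii)}.
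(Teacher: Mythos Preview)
Your proposal is correct and takes essentially the same approach as the paper: the corollary follows immediately from Lemma~\ref{thm:difpow} specialized to $n=1$, once one knows (as recorded just before the statement) that $t\mapsto u(t)=P_tf$ is continuously differentiable in $\Lp$ with $u'(t)=P_tLf$. Your supplementary sketch of the proof of Lemma~\ref{thm:difpow} is not needed for the corollary itself, and differs from the paper's argument only cosmetically---you split into the cases $\kappa\ge2$ and $1<\kappa\le2$, whereas the paper handles both at once via the interpolated estimate \eqref{eq:restpowers} with an exponent $\lambda\in(1,2]$ chosen so that $\kappa-\lambda>0$.
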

	
	Moreover, since $(P_t)_{t\geq 0}$ is symmetric, it is an analytic semigroup on $\Lp$ for $p\in(1,\infty)$; see Liskevich and Perel'muter \cite[Corollary 3.2]{MR1224619}. Therefore, for all $t>0$ and $f\in\Lp$, 
	$\frac{\rm d}{\dt} P_t f=u'(t)$
	exists in $\Lp$, so $P_t f \in \DpL$ and
	$u'(t) = L P_t f = L u(t)$.
	
	\smallskip
	
	As a special case of \eqref{eq:ep-1a}, in what follows we consider the integral form
	\begin{align}\label{eq:ep-1}
		\E_p [u] = \frac{1}{p} \intRd\intRd F_p(u(x),u(y)) \nu(x,y)\,\dx\dy.
	\end{align} 
	Of course, the form is well-defined (possibly infinite) for every $u\!:\Rd\to\R$ because $F_p\geq 0$. By the symmetry of $\nu$,
	\begin{eqnarray}
		\label{eq:Epsym}
		\E_p [u]
		&=&
		\frac{1}{p} \intRd\intRd H_p(u(x),u(y)) \nu(x,y)\,\dx\dy \nonumber
		\\
		&=&
		\frac{1}{2} \intRd\intRd
		(u(y)-u(x))
		\left(
		u(y)^{\langle p - 1 \rangle} - u(x)^{\langle p - 1 \rangle}
		\right)
		\nu(x,y)\,\dx\dy.
	\end{eqnarray}
	The natural domain of $\E_p$ is
	\begin{align}
		\DEp := \{u\in\Lp:\ \E_p[u]<\infty\}.
	\end{align}
	When $p=2$,  we get the usual Dirichlet form of the semigroup,
	\begin{align}
		\label{eq:E2}
		\mathcal E_2[u]= \frac{1}{2} \intRd\intRd
		(u(y)-u(x))^2
		\nu(x,y)\,\dx\dy,
	\end{align}
	with domain $\mathcal D(\mathcal E_2)$. We write $\E:=\E_2$.
	
	For $t>0$, $u\in\Lp$, and $v\in L^q(\Rd)$, we define, as usual,
	\begin{align}\label{eq:Etdef}
		\E^{(t)}(u,v)
		:=
		\frac{1}{t}
		\langle u-P_tu,v \rangle =\frac{1}{t}\int_{\Rd} (u(x)-P_tu(x))v(x)\,\dx.
	\end{align}
	Here and below we use the following notation for the canonical paring:
	\begin{align*}
		\langle u,v \rangle := \intRd u(x)v(x)\,\dx.
	\end{align*}
	The next result was established in \cite[Lemma 7]{MR4372148} for the fractional Laplacian, but since its proof requires only symmetry and the conditions \eqref{eq:mp} and \eqref{eq:zp}, it applies verbatim in the present setting.
	
	\begin{prop}
		\label{thm:EpDpL}
		Let $p>1$. For every $u\in \Lp$, we have
		\begin{equation}\label{eq:wpf}
			\E_p[u]= \lim_{t\to 0} \E^{(t)}(u,u^{\langle p-1 \rangle}).
		\end{equation}
		Furthermore,
		\begin{eqnarray}\label{eq:dom_def-1}
			\mathcal D(\E_p)
			& = &
			\{u\in \Lp: \sup_{t>0}\mathcal E^{(t)}(u,u^{\langle p-1 \rangle})<\infty\}\\
			\label{eq:dom_def-2}
			&=&
			\{u\in \Lp: \text{ finite } \lim_{t\to 0}\mathcal E^{(t)}(u,u^{\langle p-1 \rangle}) \text{ exists}\}.
		\end{eqnarray}
		For arbitrary $u\!: \Rd \to \R$, we have
		\begin{equation}\label{eq:cHkf}
			\frac{4(p-1)}{p^2}\E[u^{\langle p/2\rangle}]\le \E_p[u]\le 2 \E[u^{\langle p/2\rangle}]
		\end{equation}
		and $\mathcal D(\E_p)=\mathcal D(\E)^{\langle 2/p\rangle}:=\{v^{\langle 2/p\rangle}: v\in \mathcal D(\E)\}$.
		Finally, $\mathcal D_p(L)\subset  \mathcal D(\E_p)$ and
		\begin{equation}\label{eq:pfag}
			\E_p[u]=-\langle Lu, u^{\langle p-1\rangle}\rangle, \quad u\in \mathcal D_p(L).
		\end{equation}
	\end{prop}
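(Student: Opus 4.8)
The plan is to establish the four assertions of Proposition~\ref{thm:EpDpL} in the following order: first the limit formula \eqref{eq:wpf}, then the two descriptions \eqref{eq:dom_def-1}--\eqref{eq:dom_def-2} of the domain, then the two-sided comparison \eqref{eq:cHkf} together with the identification $\mathcal D(\E_p)=\mathcal D(\E)^{\langle 2/p\rangle}$, and finally the generator formula \eqref{eq:pfag}.

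For \eqref{eq:wpf}, I would start from the defining expression \eqref{eq:Etdef}, writing $\E^{(t)}(u,u^{\langle p-1\rangle})=\tfrac1t\int_{\Rd}(u(x)-P_tu(x))u(x)^{\langle p-1\rangle}\,\dx$, and then use symmetry of $p_t$ together with $\int p_t(x,y)\,\dy=1$ to "symmetrize" this into a double integral over $\Rd\times\Rd$ against $p_t(x,y)/t$. Concretely, $\tfrac1t\langle u-P_tu, u^{\langle p-1\rangle}\rangle = \tfrac1t\iint (u(x)-u(y))u(x)^{\langle p-1\rangle}p_t(x,y)\,\dy\,\dx$, and averaging this with the same expression having $x,y$ swapped produces $\tfrac1{2t}\iint (u(x)-u(y))(u(x)^{\langle p-1\rangle}-u(y)^{\langle p-1\rangle})p_t(x,y)\,\dy\,\dx$, whose integrand is pointwise nonnegative (monotonicity of $a\mapsto a^{\langle p-1\rangle}$). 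Then I invoke \eqref{eq:zp}, $p_t(x,y)/t\to\nu(x,y)$, pass to the limit—the lower bound by Fatou, the upper bound by \eqref{eq:mp}, $p_t(x,y)/t\le c\nu(x,y)$, and dominated convergence (the dominating function $(u(x)-u(y))(u(x)^{\langle p-1\rangle}-u(y)^{\langle p-1\rangle})\nu(x,y)$ is integrable once $\E_p[u]<\infty$, and when $\E_p[u]=\infty$ Fatou already forces the limit to be $+\infty$)—to recover \eqref{eq:Epsym}, i.e. $\E_p[u]$. The equalities \eqref{eq:dom_def-1}--\eqref{eq:dom_def-2} then follow: monotonicity in $t$ of $\E^{(t)}$ up to the relative-bound constants, or more cleanly the fact that the symmetrized integrand increases to its limit (a standard consequence of \eqref{eq:mp}–\eqref{eq:zp} exploited already in \cite{MR4372148}), shows that finiteness of the supremum, existence of a finite limit, and $\E_p[u]<\infty$ are all equivalent.

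The comparison \eqref{eq:cHkf} is purely pointwise: by \eqref{e.cHk} in the case $n=1$ we have $F_p(a,b)\asymp |a^{\langle p/2\rangle}-b^{\langle p/2\rangle}|^2$ with the stated constants $\tfrac{4(p-1)}{p^2}$ and $2$ (these are the sharp one-dimensional constants recorded in \cite{MR4372148}); integrating against $\nu(x,y)\,\dx\,\dy$ and recalling \eqref{eq:E2} gives $\tfrac{4(p-1)}{p^2}\E[u^{\langle p/2\rangle}]\le \E_p[u]\le 2\E[u^{\langle p/2\rangle}]$ directly, with no analytic input. The identification $\mathcal D(\E_p)=\mathcal D(\E)^{\langle 2/p\rangle}$ is then immediate from this sandwich once one checks $u\in\Lp\iff u^{\langle p/2\rangle}\in L^2(\Rd)$ (true since $|u^{\langle p/2\rangle}|^2=|u|^p$) and notes $(u^{\langle p/2\rangle})^{\langle 2/p\rangle}=u$.

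Finally, for \eqref{eq:pfag}, let $u\in\DpL$. Then $(u-P_tu)/t\to -Lu$ in $\Lp$ as $t\to0^+$, while $u^{\langle p-1\rangle}\in L^q(\Rd)$ with $q=p/(p-1)$ since $|u^{\langle p-1\rangle}|^q=|u|^p$; hence $\E^{(t)}(u,u^{\langle p-1\rangle})=\tfrac1t\langle u-P_tu, u^{\langle p-1\rangle}\rangle\to -\langle Lu, u^{\langle p-1\rangle}\rangle$ by continuity of the $\Lp$–$L^q$ pairing. Comparing with \eqref{eq:wpf} gives $\E_p[u]=-\langle Lu,u^{\langle p-1\rangle}\rangle<\infty$, which in particular shows $\DpL\subset\mathcal D(\E_p)$. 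I expect the main obstacle to be the careful justification of the passage to the limit in \eqref{eq:wpf}—specifically, handling the signed pre-symmetrized integrand and ensuring the symmetrization is legitimate (Fubini–Tonelli applies to the symmetrized nonnegative integrand, but one must argue that the non-symmetrized integral is itself well-defined, which can be done either via the a priori bound from Stein's inequality applied to $P_tu$ or by first restricting to $u\in\Lp\cap L^\infty$ and a truncation/density argument)—together with verifying the monotone dependence on $t$ needed for the domain characterizations; the remaining pieces are routine.
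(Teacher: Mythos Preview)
Your proposal is essentially correct and reconstructs the argument the paper has in mind: the paper itself gives no proof here, merely citing \cite[Lemma~7]{MR4372148} and observing that under symmetry and the relative bounds \eqref{eq:mp}--\eqref{eq:zp} that proof carries over verbatim. Your sketch---symmetrize $\E^{(t)}(u,u^{\langle p-1\rangle})$ via conservativeness and symmetry of $p_t$ into the nonnegative double integral with kernel $p_t(x,y)/t$, pass to the limit using \eqref{eq:zp} and \eqref{eq:mp}, deduce the domain characterizations, use the elementary pointwise bound behind \eqref{e.cHk} for \eqref{eq:cHkf}, and finish \eqref{eq:pfag} by the $L^p$--$L^q$ pairing---is exactly that argument.

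Two small corrections. First, the assertion that ``the symmetrized integrand increases to its limit'' is not what \eqref{eq:mp}--\eqref{eq:zp} give you: they give $p_t(x,y)/t\le c\,\nu(x,y)$ and $p_t(x,y)/t\to\nu(x,y)$, with no monotonicity. The clean way to pass to the limit (and what the paper and \cite{MR4372148} actually use) is Lemma~\ref{lem:Fatou}, which is precisely tailored to this situation; it simultaneously yields \eqref{eq:wpf} and, via Fatou plus the domination, the equivalences \eqref{eq:dom_def-1}--\eqref{eq:dom_def-2} without any monotonicity. Second, for \eqref{eq:cHkf} the specific constants $4(p-1)/p^2$ and $2$ attach to the comparison of $(b-a)(b^{\langle p-1\rangle}-a^{\langle p-1\rangle})$ with $(b^{\langle p/2\rangle}-a^{\langle p/2\rangle})^2$ (i.e.\ the symmetrized kernel $H_p$), not to $F_p$ itself; the integrated inequality then follows after matching the $1/2$ in \eqref{eq:E2} with the $1/2$ in \eqref{eq:Epsym}. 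These are bookkeeping points; the overall route is the intended one.
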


	The discussion extends to functions with values in $\R^n$, $n=1,2,\ldots$.  
	Namely, let $f_1,\ldots,f_n\in L^p(\Rd)$, so $F:=(f_1,\ldots,f_n)\in L^p(\Rd;\R^n)$. We denote
	\begin{equation}\label{eq:defpt-n}
		P_tF:=(P_tf_1,\ldots,P_tf_n), \quad t\ge 0,
	\end{equation}
	thus $P_tF\in L^p(\Rd;\R^n)$, $t\ge 0$.
	If, furthermore, $f_1,\ldots,f_n\in \mathcal D_p(L)$, then we define
	\begin{equation}\label{eq:Lndef}
		LF:=(Lf_1,\ldots,Lf_n).
	\end{equation}
	Then, letting $U(t):=P_t F(t)$, $t\ge 0$, we get
	$U'(t)= LU(t)$ and the following multidimensional extension of Corollary~\ref{lem:pdif}, an easy consequence of Lemma~\ref{thm:difpow}.
	\begin{cor}\label{lem:pdiff-n}
		Let $n=1,2,\ldots$, $f_1,\ldots,f_n\in\DpL$, $F:=(f_1,\ldots,f_n)$,
		and $U(t)=P_t F$, $t\ge 0$.
		If $1<\kappa\leq p$, then for $t\ge 0$,
		\begin{enumerate}
			\item[\bf(i)] $|U(t)|^\kappa$ is continuously differentiable in $L^{p/\kappa}(\Rd)$ with
			\[\left(|U(t)|^\kappa\right)'=\kappa U(t)^{\langle\kappa -1\rangle}\cdot LU(t),
			\]
			\item[\bf(ii)] $U(t)^{\langle\kappa\rangle}$ is continuously differentiable in $L^{p/\kappa}(\Rd;\R^n)$ with
			\[\left( U(t)^{\langle \kappa\rangle}\right)'= \left(J_{\langle\kappa\rangle}\circ U(t)\right) LU(t).\]
		\end{enumerate}
	\end{cor}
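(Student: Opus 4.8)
The plan is to deduce Corollary~\ref{lem:pdiff-n} from the $L^p$ chain rule of Lemma~\ref{thm:difpow}, in the same way that Corollary~\ref{lem:pdif} is its special case $n=1$. The first step is to record that $t\mapsto U(t)=P_tF$ is a continuously differentiable curve in $L^p(\Rd;\R^n)$ with $U'(t)=LU(t)$ for all $t\ge0$. This is purely coordinatewise: since each $f_i\in\DpL$ we have $P_tf_i\in\DpL$ and $\tfrac{d}{dt}P_tf_i=LP_tf_i=P_tLf_i$ in $\Lp$; moreover $Lf_i\in\Lp$, so by strong continuity of $(P_t)_{t\ge0}$ on $\Lp$ the curve $t\mapsto P_tLf_i$ is continuous on $[0,\infty)$, tending to $Lf_i$ as $t\to0^+$. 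Stacking the $n$ coordinates gives the assertion for $U$ and $LU$.

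Next I apply Lemma~\ref{thm:difpow} to the curve $v=U$. That lemma treats precisely the compositions $v\mapsto|v|^\kappa$ and $v\mapsto v^{\langle\kappa\rangle}$ for differentiable $L^p(\Rd;\R^n)$-valued curves $v$ and exponents $1<\kappa\le p$; the relevant pointwise input is that $z\mapsto|z|^\kappa$ and $z\mapsto z^{\langle\kappa\rangle}$ are $C^1$ on all of $\Rn$, with $\nabla|z|^\kappa=\kappa z^{\langle\kappa-1\rangle}$ by \eqref{eq:Nabla|z|^p} and Jacobian $J_{\langle\kappa\rangle}(z)$ by \eqref{eq:diff-french-n}, both vanishing at the origin and obeying the growth bounds $|\kappa z^{\langle\kappa-1\rangle}|\le\kappa|z|^{\kappa-1}$ and $\|J_{\langle\kappa\rangle}(z)\|\le C|z|^{\kappa-1}$. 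These bounds are what make the composed derivatives land in $L^{p/\kappa}$, since $|U(t)|^{\kappa-1}\in L^{p/(\kappa-1)}(\Rd)$, $LU(t)\in L^p(\Rd;\R^n)$, and $\tfrac{\kappa-1}{p}+\tfrac1p=\tfrac\kappa p$. The chain rule then yields, for $t\ge0$,
\begin{align*}
	\big(|U(t)|^\kappa\big)' &= \kappa\,U(t)^{\langle\kappa-1\rangle}\cdot U'(t)=\kappa\,U(t)^{\langle\kappa-1\rangle}\cdot LU(t),\\
	\big(U(t)^{\langle\kappa\rangle}\big)' &= \big(J_{\langle\kappa\rangle}\circ U(t)\big)\,U'(t)=\big(J_{\langle\kappa\rangle}\circ U(t)\big)LU(t),
\end{align*}
the first derivative taken in $L^{p/\kappa}(\Rd)$ and the second in $L^{p/\kappa}(\Rd;\R^n)$.

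It remains to see that these derivatives depend continuously on $t$, which is likewise part of the conclusion of Lemma~\ref{thm:difpow}; the mechanism is that $t\mapsto U(t)$ and $t\mapsto LU(t)=P_tLF$ are continuous into $L^p(\Rd;\R^n)$, the superposition maps $v\mapsto v^{\langle\kappa-1\rangle}$ and $v\mapsto J_{\langle\kappa\rangle}\circ v$ are continuous from $L^p(\Rd;\R^n)$ into $L^{p/(\kappa-1)}$ of vector- resp.\ matrix-valued functions (by the growth bounds above and the continuity of $z\mapsto z^{\langle\kappa-1\rangle}$ and $z\mapsto J_{\langle\kappa\rangle}(z)$ on $\Rn$), and Hölder's inequality with exponents $p/(\kappa-1)$ and $p$ then makes the products continuous into $L^{p/\kappa}$. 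The only point genuinely requiring care, already absorbed into the hypotheses of Lemma~\ref{thm:difpow}, is the endpoint $t=0$: the one-sided derivative there uses precisely that $Lf_i\in\Lp$ and $P_tLf_i\to Lf_i$ in $\Lp$, i.e.\ the hypothesis $f_i\in\DpL$ rather than merely $f_i\in\Lp$. Beyond that, the proof is a coordinatewise bookkeeping on top of Lemma~\ref{thm:difpow}, with no real obstacle.
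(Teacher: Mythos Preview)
Your proposal is correct and follows essentially the same approach as the paper: the paper states immediately before the corollary that $U'(t)=LU(t)$ and then calls the result ``an easy consequence of Lemma~\ref{thm:difpow},'' which is precisely what you spell out---verify coordinatewise that $t\mapsto P_tF$ is continuously differentiable in $L^p(\Rd;\R^n)$ with derivative $LU(t)$, then invoke Lemma~\ref{thm:difpow}. Your additional remarks on growth bounds, H\"older exponents, and the endpoint $t=0$ are accurate elaborations but go beyond what the paper records.
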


	\smallskip
	
	The following result will be useful in limiting procedures later on.
	\begin{lem}{\cite[Lemma 6]{MR4372148}}
		\label{lem:Fatou}
		If nonnegative functions $f,f_k\!:\Rd\to\R$, $k=1,2,\ldots$,  satisfy
		$f_k\le c f$ and $f=\lim_{k\to\infty} f_k$, then
		$\lim_{k\to\infty} \int f_k \,{\rm d}\mu= \int f \,{\rm d}\mu$ for each measure $\mu$.
	\end{lem}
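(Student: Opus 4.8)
The plan is to split the argument according to whether the limiting integral $\int f\,{\rm d}\mu$ is finite or infinite, and to invoke a classical convergence theorem in each case. Since all the functions are Borel and nonnegative and $f=\lim_{k\to\infty}f_k$ pointwise, Fatou's lemma applied to the sequence $(f_k)$ gives, for \emph{every} measure $\mu$, the lower bound
\[\int f\,{\rm d}\mu\le\liminf_{k\to\infty}\int f_k\,{\rm d}\mu.\]
In the case $\int f\,{\rm d}\mu=\infty$ this already forces $\int f_k\,{\rm d}\mu\to\infty=\int f\,{\rm d}\mu$, so nothing more is needed there.

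In the remaining case $\int f\,{\rm d}\mu<\infty$, the hypothesis $0\le f_k\le cf$ exhibits $cf$ as a $\mu$-integrable dominating function for the sequence, and I would conclude $\int f_k\,{\rm d}\mu\to\int f\,{\rm d}\mu$ from the Dominated Convergence Theorem. Equivalently, applying Fatou's lemma to the nonnegative functions $cf-f_k$ yields $\liminf_k\int(cf-f_k)\,{\rm d}\mu\ge\int(cf-f)\,{\rm d}\mu$, and since $\int cf\,{\rm d}\mu<\infty$ this rearranges to $\limsup_k\int f_k\,{\rm d}\mu\le\int f\,{\rm d}\mu$; combined with the Fatou lower bound this gives the claim without explicitly invoking dominated convergence.

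The argument is elementary, so there is no genuine obstacle; the only point deserving attention — and the reason one cannot dispose of the lemma in a single line by quoting the Dominated Convergence Theorem — is that when $\int f\,{\rm d}\mu=\infty$ the natural dominating function $cf$ is itself not $\mu$-integrable, which is what forces the case split and the separate appeal to Fatou's lemma. Note also that no $\sigma$-finiteness or finiteness of $\mu$ is assumed, but this causes no difficulty, since both Fatou's lemma and the Dominated Convergence Theorem hold for an arbitrary measure $\mu$.
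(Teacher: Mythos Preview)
Your proof is correct and is the standard elementary argument. The paper does not supply its own proof of this lemma but merely quotes it from \cite[Lemma~6]{MR4372148}, so there is nothing further to compare.
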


	\section{Hardy--Stein identity}
	\label{sec:2func}
	Below we work under the assumptions on $\nu$ formulated in Subsection~\ref{ss.a}.
	We will extend \eqref{eq:HS-BBL} to arbitrary dimension $n=1,2,\ldots$. We recall that the proof given in \linebreak
 \cite[Theorem~3.2]{MR3556449} for  $n=1$ relies on approximations and pointwise calculus in $\Rd$. Here, instead, we use a more synthetic differential calculus in $L^p$. 
	\begin{thm}
		\label{thm:HS15}
		Let $p>1$, $n=1,2,\ldots$,  and $F=(f_1,\ldots,f_n)\in L^p(\Rd;\R^n)$.   Then,
		\begin{align}
			\label{eq:HS15}
			\intRd |F(x)|^p\,\dx
			=
			\int_0^\infty \intRd \intRd
			\mathcal{F}_p (P_tF(x),P_tF(y)) \nu(x,y)
			\,\dx\dy\dt.
		\end{align}
	\end{thm}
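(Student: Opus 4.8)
The plan is to prove \eqref{eq:HS15} by a synthetic $L^p$-calculus argument, first treating the case where the components $f_1,\dots,f_n$ belong to $\DpL$ and reducing the general case to it at the end. So suppose $f_1,\dots,f_n\in\DpL$, put $U(t):=P_tF$ (which then again has components in $\DpL$, with $U'(t)=LU(t)$ for $t\ge0$), and set $\phi(t):=\intRd|P_tF(x)|^p\,\dx$. By Corollary~\ref{lem:pdiff-n}(i) with $\kappa=p$, the map $t\mapsto|U(t)|^p$ is continuously differentiable in $L^1(\Rd)=L^{p/p}(\Rd)$ with derivative $p\,U(t)^{\langle p-1\rangle}\cdot LU(t)$; applying the bounded linear functional $g\mapsto\intRd g$ shows that $\phi\in C^1([0,\infty))$ with $\phi'(t)=p\intRd U(t)^{\langle p-1\rangle}(x)\cdot LU(t)(x)\,\dx$ for $t\ge0$.

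The next step, and the one I expect to be the crux, is the vector analogue of \eqref{eq:pfag}: for $V=(v_1,\dots,v_n)$ with $v_1,\dots,v_n\in\DpL$,
\begin{equation*}
	\intRd V^{\langle p-1\rangle}(x)\cdot LV(x)\,\dx=-\frac1p\intRd\intRd\mathcal{F}_p(V(x),V(y))\,\nu(x,y)\,\dx\dy.
\end{equation*}
I would prove this exactly as Proposition~\ref{thm:EpDpL} is proved in the scalar case: using $\intRd p_h(x,y)\,\dy=1$ write $V(x)-P_hV(x)=\intRd(V(x)-V(y))p_h(x,y)\,\dy$, pair with $V^{\langle p-1\rangle}(x)$, integrate in $x$, and symmetrize the resulting double integral by $p_h(x,y)=p_h(y,x)$ and \eqref{eq:Hp} to get
\begin{equation*}
	\frac1h\intRd(V(x)-P_hV(x))\cdot V^{\langle p-1\rangle}(x)\,\dx=\frac1p\intRd\intRd\mathcal{H}_p(V(x),V(y))\,\frac{p_h(x,y)}{h}\,\dx\dy,
\end{equation*}
and let $h\to0^+$. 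On the left, $(P_hV-V)/h\to LV$ in $L^p(\Rd;\R^n)$ while $V^{\langle p-1\rangle}\in L^{p/(p-1)}(\Rd;\R^n)$, so the left side tends to $-\intRd LV(x)\cdot V^{\langle p-1\rangle}(x)\,\dx$. On the right, $\mathcal{H}_p\ge0$, $p_h(x,y)/h\le c\,\nu(x,y)$ by \eqref{eq:mp} and $p_h(x,y)/h\to\nu(x,y)$ by \eqref{eq:zp}, so Lemma~\ref{lem:Fatou} lets me pass to the limit and the right side tends to $\frac1p\intRd\intRd\mathcal{H}_p(V(x),V(y))\nu(x,y)\,\dx\dy$, which equals $\frac1p\intRd\intRd\mathcal{F}_p(V(x),V(y))\nu(x,y)\,\dx\dy$ by the symmetry of $\nu$ and \eqref{eq:Hp}; in particular this right-hand side is finite, so the components being in $\DpL$ automatically forces $V$ into $\DEp$ (vector sense).

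Combining the two steps, $\phi'(t)=-\intRd\intRd\mathcal{F}_p(P_tF(x),P_tF(y))\,\nu(x,y)\,\dx\dy$ for $t\ge0$. Integrating over $(0,T)$ and letting $T\to\infty$, I would use $\phi(0)=\intRd|F|^p\,\dx$, $\phi(T)=\|P_TF\|_{L^p(\Rd;\R^n)}^p\to0$ by \eqref{Pt-strongly-stable} applied to each component, and monotone convergence on the right (legitimate since $\mathcal{F}_p\ge0$, Tonelli rewriting the iterated integral as a triple one), to obtain \eqref{eq:HS15} for $F$ with components in $\DpL$. For general $F\in L^p(\Rd;\R^n)$, fix $\epsilon>0$; then $P_\epsilon F$ has components in $\DpL$ by analyticity of $(P_t)$ on $L^p$, so the case just treated, applied to $P_\epsilon F$ and followed by the substitution $t\mapsto t+\epsilon$, gives $\intRd|P_\epsilon F(x)|^p\,\dx=\int_\epsilon^\infty\intRd\intRd\mathcal{F}_p(P_tF(x),P_tF(y))\nu(x,y)\,\dx\dy\,\dt$. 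Letting $\epsilon\to0^+$, the left side tends to $\intRd|F|^p\,\dx$ by strong continuity of $(P_t)$ on $L^p(\Rd;\R^n)$, and the right side increases to $\int_0^\infty\intRd\intRd\mathcal{F}_p(P_tF(x),P_tF(y))\nu(x,y)\,\dx\dy\,\dt$ by monotone convergence, which is \eqref{eq:HS15}.

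The main obstacle is thus the generator identity of the second paragraph: since the pointwise action of $L$ on $\DpL$ is not directly at hand, one must run the difference-quotient computation, and it is precisely the symmetrization together with the dominated passage to the limit — powered by the relative bounds \eqref{eq:mp}--\eqref{eq:zp} and by Lemma~\ref{lem:Fatou} — that makes it go through. The other delicate ingredient, the $L^1$-differentiability of $t\mapsto|P_tF|^p$ and the identification of its derivative, is already packaged in Corollary~\ref{lem:pdiff-n}, and the remaining passages to the limit are routine.
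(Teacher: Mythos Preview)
Your proof is correct and follows essentially the same approach as the paper's: differentiate $t\mapsto\int|P_tF|^p$ via Corollary~\ref{lem:pdiff-n}, identify the derivative through the difference quotient and pass to the limit using \eqref{eq:mp}, \eqref{eq:zp}, and Lemma~\ref{lem:Fatou}, integrate in $t$, and handle general $F$ by analyticity. The only cosmetic difference is that you symmetrize the double integral to reach $\mathcal{H}_p$ and then invoke $\int\!\!\int\mathcal{H}_p\,\nu=\int\!\!\int\mathcal{F}_p\,\nu$, whereas the paper adds and subtracts $\tfrac1h\int|P_tF|^p=\tfrac1h\int P_h(|P_tF|^p)$ (conservativeness) to land directly on $\mathcal{F}_p$.
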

	\begin{proof}
		Let first $F=(f_1,\ldots,f_n)\in(\DpL)^n$ and $0\leq t\le T<\infty$. Then $U(t):=P_tF\in (\DpL)^n$
		and $LP_tF= LU(t)= (LP_tf_1,\ldots,LP_tf_n)$.
		From Corollary~\ref{lem:pdiff-n}, $|U(t)|^p$ is continuously differentiable in $L^1(\Rd)$ and 
		$\left( |U(t)|^p \right)'
		=
		pU(t)^{\langle p-1 \rangle}\cdot  L F(t)$.
		As $f\mapsto\intRd f\,\dx$ is a continuous linear functional on $L^1(\Rd)$,
		\begin{eqnarray}
			\label{eq:ddtintmod}
			\frac{\rm d}{\dt}  \intRd |U(t)|^p\,\dx
			&=&
			\intRd \frac{\rm d}{\dt}|U(t)|^p\,\dx
			=
			\intRd p U(t)^{\langle p-1 \rangle}\cdot  L U(t) \,\dx
			\nonumber
\\
			&=&
			\langle  L U(t), p U(t)^{\langle p-1 \rangle} \rangle.
		\end{eqnarray}
		Since  $ LU(t) = \lim_{h\to 0^+} (P_h U(t)-U(t))/h$  strongly in $L^p(\Rd;\R^n)$,     $U(t)^{\langle p-1\rangle}$ belongs to the (dual) space $L^{\frac{p}{p-1}}(\Rd;\R^n)$, and the semigroup $(P_t)_{t\geq 0}$ is conservative, we get
		\begin{eqnarray}
			\nonumber
			&&\langle  L U(t), p U(t)^{\langle p-1 \rangle} \rangle 
			\\
			\nonumber
			&=&
			\lim_{h\to0^+}
			\intRd \intRd
			pU(t)(x)^{\langle p-1 \rangle}\cdot(U(t)(y) - U(t)(x))
			\frac{p_h(x,y)}{h}
			\,\dx\dy
			\\
			\nonumber
			&=&
			\lim_{h\to0^+}\left[
			\intRd \intRd
			pP_tF(x)^{\langle p-1 \rangle}\cdot(P_tF(y) - P_tF(x))
			\frac{p_h(x,y)}{h}
			\,\dx\dy \right.\\
			&\quad &+
			\left.
			\frac{1}{h} \intRd |P_tF(x)|^p \,\dx
			-
			\frac{1}{h} \intRd P_h(|P_tF|^p)(x) \,\dx
			\right]
			\nonumber \\
			&=&
			-\lim_{h\to0^+} \intRd \intRd
			\mathcal{F}_p (P_tF(x),P_tF(y))
			\frac{p_h(x,y)}{h}
			\,\dx\dy
			\nonumber \\
			&=&
			- \intRd \intRd
			\mathcal{F}_p (P_tF(x), P_tF(y))
			\nu(x,y)
			\,\dx\dy.
			\label{eq:diff15}
		\end{eqnarray}
		The last equality  \eqref{eq:diff15}  is justified by
		Lemma~\ref{lem:Fatou}, the nonnegativity of $\mathcal{F}_p$, and
		assumptions \eqref{eq:mp}, \eqref{eq:zp}.
		Summarizing, 
		\[\frac{\rm d}{\dt}  \intRd |U(t)|^p \dx
		=
		- \intRd \intRd
		\mathcal{F}_p (P_tF(x),P_tF(y))
		\nu(x,y)
		\,\dx\dy.\]
		Since $|U(t)|^p$ is continuously differentiable in $L^1(\Rd)$ on ${[0,\infty)}$
		and the integration is a continuous linear functional on $L^1(\Rd)$, 
		$\intRd |U(t)|^p\,\dx$ is continuously differentiable.
		Integrating from $0$ to $T$, we obtain
		\begin{eqnarray*}
			\intRd |F|^p\,\dx - \intRd |U(T)|^p\,\dx
			&=&
			-\int_0^T \left(\frac{\rm d}{\dt} \intRd |U(t)|^p \,\dx \right)\,\dt
			\nonumber \\
			&=&
			\int_0^T \intRd \intRd
			\mathcal{F}_p (P_tF(x),P_tF(y))
			\nu(x,y)
			\,\dx\dy\dt.
		\end{eqnarray*}
		We let $T\to\infty$ and obtain $\intRd |U(T)|^p\,\dx \to 0$ from the strong stability \eqref{Pt-strongly-stable}.
		
		We now  relax the assumption $f_j\in\DpL$. Let $F=(f_1,\ldots,f_n)\in L^p(\Rd;\R^n)$ be arbitrary and  let $s>0$. Since $(P_t)_{t \geq 0}$ is an analytic semigroup on $\Lp$, $P_s f_j\in\DpL$ for all $j=1,\ldots,n$, so $U(s)\in(\DpL)^n$. By \eqref{eq:HS15} and a change of variables,
		\begin{align}
			\label{eq:HS15s}
			\intRd |U(s)|^p\,\dx =
			\int_s^\infty \intRd \intRd
			\mathcal{F}_p (P_tF(x),P_tF(y))
			\nu(x,y)
			\,\dx\dy\dt.
		\end{align}
		Let $s$ decrease to $0$.
		Since $\mathcal{F}_p\ge0$,  the right-hand side of \eqref{eq:HS15s} increases to
		$\int_0^\infty \intRd \intRd \mathcal{F}_p (P_tF(x),P_tF(y)) \nu(x,y)\,\dx\dy\dt$.
		By the strong continuity of $(P_t)_{t \geq 0}$ in $\Lp$, $P_sf_j\to f_j$,  $j=1,\ldots,n$,  in $\Lp$, so $U(s)=P_sF\to F$ in $L^p(\Rd;\R^n)$, in particular $\|U(s)\|^p_{L^p(\Rd;\R^n)}\to \|F\|^p_{L^p(\Rd;\R^n)}$.
		The proof is complete.
	\end{proof}
	
	\begin{rem}
		Since $\nu$ is symmetric,  by \eqref{eq:Hp} we get a symmetrized version of the Hardy\nobreakdash--Stein identity for every $F\in L^p(\Rd;\R^n)$:
		\begin{align*}
			\int\limits_\Rd |F|^p \,\dx
			&=
			\frac{p}{2} \int\limits_0^\infty\!\int\limits_\Rd\!\int\limits_\Rd
			(P_tF(y)-P_tF(x))\!\cdot\!
			\left(
			P_tF(y)^{\langle p - 1 \rangle} - P_tF(x)^{\langle p - 1 \rangle}
			\right)\!\nu(x,y)
			\,\dx\dy\dt.
		\end{align*}
	\end{rem}
	
	\section{Polarized Hardy--Stein identity}\label{sec:polarized}
	Having proved the Hardy\nobreakdash--Stein identity for a vector of  $\Lp$ functions, we can establish a disintegration of $\intRd f(x)g(x)^{\langle p-1 \rangle}\,\dx$
	for $f,g\in \Lp$ with $p\in[2,\infty)$.
	
	To this end we introduce the function $\mathcal J_p\!:\mathbb R^2\times
	\mathbb R^2\to\mathbb R$, defined as follows:
	\begin{align}
		\label{eq:Jp}
		\mathcal J_p(w,z)=&\mathcal J_p(w_1,w_2;z_1,z_2)
		:= z_1z_2^{\langle p-1\rangle} - w_1w_2^{\langle p-1\rangle}\\\nonumber
		&-w_2^{\langle p-1\rangle}(z_1-w_1)-(p-1)w_1|w_2|^{p-2}(z_2-w_2),
	\end{align}
	where $w=(w_1,w_2)$, $z=(z_1,z_2)$, and $w_1,w_2,z_1,z_2\in\R$. For instance,
	\begin{align}
		\label{eq:Jp2}
		\mathcal J_2(w,z)
		= z_1z_2 - w_1w_2-w_2(z_1-w_1)-w_1(z_2-w_2)
		=
		(z_1-w_1)(z_2-w_2).
	\end{align}
	As complicated as it looks, $\mathcal J_p$ is just the second-order Taylor remainder of the mapping $\mathbb R^2\ni (z_1,z_2)\mapsto z_1z_2^{\langle p-1\rangle}$, when the argument changes from $w$ to $z$. Below we mostly apply $\mathcal J_p$ to $w_1=P_tf(x)$, $w_2=P_tg(x)$, $z_1=P_tf(y)$, and $z_2=P_tg(y)$, so $w$ corresponds to the argument $x$ of the vector function $\Phi=(f,g)$, $z$ corresponds to $y$, the subscript $1$ indicates the first function, $f$, and $2$ indicates the second function, $g$.

	\smallskip
	
	Here is the main result of the paper, which we prove
below in this section.	
	\begin{thm}[Polarized Hardy\nobreakdash--Stein identity]\label{thm:polarized}
		Let $p\geq2$. For $f,g\in\Lp$,  denote
		\begin{equation}\label{eq:denote}
			\Phi(x) := (f(x),g(x))\quad{\rm and}\quad P_t\Phi(x) := (P_tf(x),P_tg(x)),\quad t\ge 0,\; x\in \Rd.
		\end{equation}
		Then,
		\begin{equation}\label{eq:int-abs}
			\int_0^\infty\int_{\mathbb R^d}\int_{\mathbb R^d}\left|\mathcal J_p(\mP_t\Phi(x),\mP_t\Phi(y))\right|\nu(x,y)\,\dx\dy\dt
			< \infty
		\end{equation}
		and
		\begin{align}
			\label{eq:HS2v2}
			\intRd fg^{\langle p - 1 \rangle}\,\dx
			=
			\int_0^\infty\int_{\mathbb R^d}\int_{\mathbb R^d}\mathcal J_p(\mP_t\Phi(x),\mP_t\Phi(y))\nu(x,y)\,\dx\dy\dt.
		\end{align}
	\end{thm}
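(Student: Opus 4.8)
The strategy is to decompose the bilinear integrand $\mathcal J_p$ into (scalar) Bregman divergences $F_p$ to which the already-proved Hardy--Stein identity (Theorem~\ref{thm:HS15} with $n=1$) applies, and then to control the cross terms by the elementary estimates collected in Appendix~\ref{sec:App-A}. Concretely, I would first establish a pointwise \emph{polarization identity} expressing $\mathcal J_p(w,z)$ (with $w=(w_1,w_2)$, $z=(z_1,z_2)$) as a linear combination of $F_p$-type quantities in the scalar variables $w_1\pm\lambda w_2$, $z_1\pm\lambda z_2$ for a suitable parameter, together with lower-order remainder terms; the cleanest route is to write $ab^{\langle p-1\rangle}$ as a difference of two convex functions of a single real variable (as the introduction hints: ``$ab|b|^{p-2}$ as a difference of two convex functions''), say $ab^{\langle p-1\rangle} = \Psi_+(a,b)-\Psi_-(a,b)$ with $\Psi_\pm$ convex on $\R^2$, and then $\mathcal J_p = \mathcal F^{+}_p - \mathcal F^{-}_p$ where $\mathcal F^\pm_p$ are the (nonnegative) second-order Taylor remainders of $\Psi_\pm$. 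Since each $\Psi_\pm$ is convex and two-homogeneous in a sense comparable to $|\cdot|^p$, one gets $\mathcal F^\pm_p(w,z)\asymp \mathcal G_p$-type bounds via Lemma~\ref{lem:Fp2Gp2}, hence absolute integrability \eqref{eq:int-abs} will follow from the finiteness of the right-hand side of the \emph{unpolarized} identity \eqref{eq:HS15} applied to the vector $(f,g)$ (equivalently to each scalar combination), which is finite because $F\in L^p$.

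For the identity \eqref{eq:HS2v2} itself, I would mimic the proof of Theorem~\ref{thm:HS15}: first take $f,g\in\DpL$, set $u(t)=P_tf$, $v(t)=P_tg$, and differentiate $t\mapsto \langle u(t),v(t)^{\langle p-1\rangle}\rangle = \intRd P_tf\,(P_tg)^{\langle p-1\rangle}\,\dx$. By Lemma~\ref{lem:otco} from Appendix~\ref{sec:App-B} (explicitly advertised in the excerpt as giving exactly this derivative), together with Corollary~\ref{lem:pdif}(ii), the product rule in $L^p$ gives
\[
\frac{\rm d}{\dt}\intRd u(t)\,v(t)^{\langle p-1\rangle}\,\dx = \langle Lu(t), v(t)^{\langle p-1\rangle}\rangle + (p-1)\langle u(t), |v(t)|^{p-2}Lv(t)\rangle,
\]
and then I would rewrite the right-hand side, using conservativeness of $(P_t)$ and symmetry of $p_h$ exactly as in the display chain \eqref{eq:diff15}, as $-\intRd\intRd \mathcal J_p(P_t\Phi(x),P_t\Phi(y))\nu(x,y)\,\dx\dy$; the two terms in $L$ produce precisely the four terms in the definition \eqref{eq:Jp} of $\mathcal J_p$ after writing each $L$-integral as a limit of $p_h/h$ double integrals and subtracting the conservativeness identities $\tfrac1h\intRd |v(t)|^p = \tfrac1h\intRd P_h(|v(t)|^p)$ etc. Integrating from $0$ to $T$ and letting $T\to\infty$ (using strong stability \eqref{Pt-strongly-stable} to kill $\langle u(T),v(T)^{\langle p-1\rangle}\rangle$, via H\"older and $\|u(T)\|_p,\|v(T)\|_p\to0$) yields \eqref{eq:HS2v2} for $f,g\in\DpL$. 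Finally one removes the domain restriction by analyticity of $(P_t)$: replace $f,g$ by $P_sf,P_sg\in\DpL$, obtain the identity on $[s,\infty)$, and let $s\to0^+$ — here, \emph{unlike} in Theorem~\ref{thm:HS15}, the integrand is signed, so one cannot just invoke monotone convergence on the right; instead one invokes the already-established absolute integrability \eqref{eq:int-abs} plus a dominated-convergence argument, with domination coming from the $\asymp\mathcal G_p$ bounds on $\mathcal F^\pm_p$ and a Stein-maximal-function majorant $f^*,g^*\in L^p$ (Lemma~\ref{lem:stein}), while the left-hand side converges because $P_sf\to f$, $P_sg^{\langle p-1\rangle}\to g^{\langle p-1\rangle}$ in the appropriate $L^p$/$L^q$ spaces.

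\textbf{Main obstacle.} The crux — and the reason the theorem is ``delicate'' for $2<p<3$, as the authors warn — is precisely the passage to the limit $s\to0^+$ and the justification of \eqref{eq:int-abs} with a domination good enough to run dominated convergence: the convex decomposition $ab^{\langle p-1\rangle}=\Psi_+-\Psi_-$ must be chosen so that both $\mathcal F^\pm_p(w,z)$ are genuinely $\lesssim |z-w|^2(|w|\vee|z|)^{p-2}$ uniformly, and so that the resulting majorant, integrated against $\nu(x,y)\,\dx\dy\dt$ with $w,z$ replaced by $P_t\Phi$ evaluated at $x,y$, is finite — which in turn reduces to the finiteness of $\int_0^\infty \E_p[\cdot]$-type quantities for scalar combinations of $f$ and $g$, available from Theorem~\ref{thm:HS15}. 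Getting the constants in the comparison $\mathcal F^\pm_p\asymp\mathcal G_p$ right (the convexity properties ``related to the Bregman co-divergence'' deferred to Appendix~\ref{sec:apx}) is the technical heart; everything else is a careful but routine transcription of the proof of Theorem~\ref{thm:HS15}.
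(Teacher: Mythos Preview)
Your high-level strategy --- write $\mathcal J_p$ as a difference of nonnegative second-order Taylor remainders coming from a convex splitting of $(a,b)\mapsto ab^{\langle p-1\rangle}$, then run the machinery of Theorem~\ref{thm:HS15} on each piece --- is exactly what the paper does. But the mechanism you propose for obtaining the absolute integrability \eqref{eq:int-abs} has a genuine gap in the range $2<p<3$, and this is precisely the ``delicate'' point the authors flag.

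You want to choose $\Psi_\pm$ convex on $\R^2$ with $ab^{\langle p-1\rangle}=\Psi_+-\Psi_-$ and then assert $\mathcal F_p^\pm(w,z)\lesssim |z-w|^2(|w|\vee|z|)^{p-2}=\mathcal G_p(w,z)$, so that absolute integrability follows from the vector Hardy--Stein identity for $\Phi=(f,g)$. But this pointwise bound cannot hold for \emph{any} nonnegative decomposition $\mathcal J_p=\mathcal F_p^+-\mathcal F_p^-$ when $2<p<3$: it would force $|\mathcal J_p|\le \mathcal F_p^++\mathcal F_p^-\lesssim \mathcal G_p$, and the paper's Remark following Lemma~\ref{lem:Jpup} exhibits explicit sequences $w^{(k)}=(1,1/k)$, $z^{(k)}=(1,2/k)$ for which $|\mathcal J_p(w^{(k)},z^{(k)})|/\mathcal G_p(w^{(k)},z^{(k)})\to\infty$. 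In fact Appendix~\ref{s.ap} carries out exactly your argument, but only for $p\ge 3$, where $|\mathcal J_p|\lesssim\mathcal G_p$ \emph{does} hold. So your dominated-convergence plan for the $s\to0^+$ step, and your route to \eqref{eq:int-abs}, both break down for $2<p<3$.

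The paper circumvents this not by pointwise comparison but by proving a \emph{separate Hardy--Stein identity for each convex piece}. Concretely, it takes $Y^{(\pm)}(z)=z_1((z_2)_\pm)^{p-1}+|z|^p$, which are convex only on the half-plane $[0,\infty)\times\R$ (Lemma~\ref{lem:Ypm}), not on all of $\R^2$; this forces an initial restriction to $f\ge0$. Proposition~\ref{lem:HSpm} then shows directly, by the same $L^p$-differentiation and Lemma~\ref{lem:Fatou} argument as in Theorem~\ref{thm:HS15}, that
\[
\int_0^\infty\!\!\intRd\!\intRd(\mathcal J_p^{(\pm)}+\mathcal F_p)(P_t\Phi(x),P_t\Phi(y))\,\nu(x,y)\,\dx\dy\dt
=\intRd\!\big(f(g_\pm)^{p-1}+|\Phi|^p\big)\,\dx<\infty,
\]
so finiteness of each nonnegative piece comes from the \emph{identity itself}, not from a $\mathcal G_p$ majorant. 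The $s\to0^+$ passage is then monotone convergence on each piece, not dominated convergence. Finally, general $f$ is handled by splitting $f=f_+-f_-$ and using that $\mathcal J_p$ is linear in the first coordinates $(w_1,z_1)$. Your outline would be repaired by replacing the $\mathcal F_p^\pm\lesssim\mathcal G_p$ claim with this direct computation of the integrals of the convex pieces.
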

Note that if $w_1=w_2=:a$ and $z_1=z_2=:b$, then
	$\mathcal J_p(w,z)=F_p(a,b)$, so \eqref{eq:HS2v2} with $f=g$ agrees with \eqref{eq:HS-BBL}, at least for $p\ge 2$.
	\begin{rem}\label{r.p2}
		If $p=2$  then \eqref{eq:HS2v2} reads 
		\begin{align}
			\label{eq:HS2v2bp}
			\intRd fg\,\dx
			=
			\int_0^\infty\int_{\mathbb R^d}\int_{\mathbb R^d}[P_tf(x)-P_tf(y)][P_tg(x)-P_t g(y)]\nu(x,y)\,\dx\dy\dt.
		\end{align}		
		In this case \eqref{eq:int-abs} and \eqref{eq:HS2v2} are obtained by polarization from the one-dimensional 
		Hardy\nobreakdash--Stein identity \eqref{eq:HS-BBL} and Cauchy-Schwarz inequality, 
		by considering $f+g$ and $f-g$. Therefore below we let $p>2$.
	\end{rem}

	Had $\mathcal J_p$ been nonnegative, the proof of \eqref{eq:HS2v2} would follow as that of \eqref{eq:HS15}. Unfortunately, this is not the case, so the proof is more complicated.
	Indeed, the function
$(z_1,z_2)\mapsto z_1z_2^{\langle p-1\rangle}$  
is not convex, even when restricted to $z_2>0$. To see this, we compute its gradient and Hessian matrix for $z_2>0$:
	\begin{align*}
		\nabla \left(z_1z_2^{p-1}\right)
		=
		\begin{bmatrix}
			z_2^{p-1} \\
			(p-1)z_1 z_2^{p-2}
		\end{bmatrix}
		,
	\end{align*}
	\begin{align}
		\nabla^2\left(z_1z_2^{p-1}\right)
		=
		\begin{bmatrix}
			0                &  (p-1)z_2^{p-2}          \\
			(p-1)z_2^{p-2}   &  (p-1)(p-2)z_1z_2^{p-3}
		\end{bmatrix}
		.
		\label{rem:z1z2pm1Hessian}
	\end{align}
	Thus,
	$\det \nabla^2\left(z_1z_2^{p-1}\right) = - (p-1)^2 z_2^{2p-4} < 0$,
	so
	the Hessian matrix $\nabla^2\left(z_1z_2^{p-1}\right)$ is not positive semi-definite
	and $z_1 z_2^{p-1}$ is not convex.
	We will rectify this situation by decomposing the mapping $$[0,\infty)\times\R\ni z=(z_1,z_2)\mapsto z_1z_2^{\langle p-1\rangle}$$
	into a difference of two convex mappings. Then (the  Taylor remainder) $\mathcal J_p$ will be a difference of two nonnegative  functions. To this end, we recall that $a_+:=a\vee 0,$ $a_-:=(-a)\vee 0$ and introduce the functions:
	\begin{eqnarray*}
		Y^{(+)}(z) & := & z_1 \left((z_2)_+\right)^{p-1} + |z|^p, \\
		Y^{(-)}(z) & := & z_1 \left((z_2)_-\right)^{p-1} + |z|^p,\quad z=(z_1,z_2)\in\R^2.
	\end{eqnarray*}
	Lemma~\ref{lem:Ypm} in Appendix~\ref{sec:apx} verifies that
	these functions are  convex on $[0,\infty)\times\R$ indeed.  Since $p>2$, they are differentiable everywhere and their Taylor remainders are nonnegative on $[0,\infty)\times \R$.
	
	Let
	$\mathcal{J}_p^{(+)}$ and $\mathcal{J}_p^{(-)}$ be the second-order Taylor remainders of the differentiable mappings
	$\R^2\ni z\mapsto z_1\left((z_2)_+\right)^{p-1}$ and  $\R^2\ni z\mapsto z_1\left((z_2)_-\right)^{p-1}$. Thus, for $z_1,z_2,w_1,w_2\in\R$,
	\begin{eqnarray*}
		\mathcal{J}_p^{(+)}(w,z)
		&=&
		z_1\left((z_2)_+\right)^{p-1} - w_1\left((w_2)_+\right)^{p-1}-\left((w_2)_+\right)^{p-1}(z_1-w_1)
		\\
		&&-(p-1)w_1\left((w_2)_+\right)^{p-2}(z_2-w_2)
	\end{eqnarray*}
	and
	\begin{eqnarray*}
		\mathcal{J}_p^{(-)}(w,z)
		&=&
		z_1\left((z_2)_-\right)^{p-1} - w_1\left((w_2)_-\right)^{p-1}-\left((w_2)_-\right)^{p-1}(z_1-w_1)
		\\
		&&+(p-1)w_1\left((w_2)_-\right)^{p-2}(z_2-w_2).
	\end{eqnarray*}
	Since $z_1((z_2)_+)^{p-1}-z_1((z_2)_-)^{p-1}=z_1z_2^{\langle p-1\rangle}$,  it follows that
	\begin{align}
		\label{eq:Jppm}
		\mathcal{J}_p = \mathcal{J}_p^{(+)} - \mathcal{J}_p^{(-)} = \left(\mathcal{J}_p^{(+)}+\mathcal F_p\right) - \left(\mathcal{J}_p^{(-)}+\mathcal F_p\right),
	\end{align}
	where we consider $\mathcal F_p$ given by \eqref{eq:Fp} with $n=2$ and we have  $\mathcal{J}_p^{(+)}+\mathcal{F}_p\ge0$ and $\mathcal{J}_p^{(-)}+\mathcal{F}_p\ge0$ on $\left([0,\infty)\times\R\right)^2$.
	Note also that, if we denote $\bar{z}:=(z_1,-z_2)$, then
	\begin{align}
		\label{eq:Jpsym}
		\mathcal{J}_p^{(+)}(\bar{w},\bar{z})
		=
		\mathcal{J}_p^{(-)}(w,z).
	\end{align}
	
	Here is a preliminary version of Theorem~\ref{thm:polarized}.
	
	\begin{prop}
		\label{prop:HSpm}
		For $p>2$, $f,g\in\Lp$, $f\geq0$, and $\Phi(x)$, $\mP_t\Phi(x)$ as in \eqref{eq:denote}, 
		\begin{align}
			\label{eq:HSp}
			\intRd &\left( f(g_+)^{p - 1} + |\Phi|^p \right) \,\dx
			\nonumber \\
			&=
			\int_0^\infty \intRd \intRd
			\left(
			\mathcal{J}_p^{(+)}
			+ \mathcal{F}_p
			\right)
			(\mP_t\Phi(x),\mP_t\Phi(y))
			\nu(x,y)
			\,\dx\dy\dt
		\end{align}
		and
		\begin{align}
			\label{eq:HSm}
			\intRd &\left( f(g_-)^{p - 1} + |\Phi|^p \right) \,\dx
			\nonumber \\
			&=
			\int_0^\infty \intRd \intRd
			\left(
			\mathcal{J}_p^{(-)}
			+ \mathcal{F}_p
			\right)
			(\mP_t\Phi(x),\mP_t\Phi(y))
			\nu(x,y)
			\,\dx\dy\dt.
		\end{align}
	\end{prop}

	\begin{proof}
		We only  prove \eqref{eq:HSp} since  \eqref{eq:HSm}
		follows by substituting $-g$ in place of $g$,
		see \eqref{eq:Fpsym} and \eqref{eq:Jpsym}.
		The proof of \eqref{eq:HSp} is much alike that of Theorem~\ref{thm:HS15}. We use the convexity of $Y^{(+)}$, resulting in the nonnegativity of its Taylor remainder, the function $\mathcal J_p^{(+)}+\mathcal F_p$.
		As before, we first consider  $f,g\in\DpL$. Fix some $0\leq t\le T<\infty$. Let $u(t):=P_tf$, $v(t):=P_tg$, and $U(t):= \mP_t\Phi = (u(t),v(t))\in L^p(\Rd;\R^2)$. Actually, $U(t)\in(\DpL)^2$.
		As seen in the proof of Theorem~\ref{thm:HS15}, the function $t\mapsto|U(t)|^p$ is continuously differentiable in $L^1(\Rd)$ and $\left( |U(t)|^p \right)'=pU(t)^{\langle p-1 \rangle}\cdot  L U(t)$.
		Since $(v(t)_+)^{p-1}=(|v(t)|^{p-1}+v(t)^{\langle p-1 \rangle})/2$, from Corollary~\ref{lem:pdif} with $\kappa=p-1>1$, we obtain that $(v(t)_+)^{p-1}$ is continuously differentiable in $L^\frac{p}{p-1}(\Rd)$ and
		\begin{eqnarray*}
			\left( (v(t)_+)^{p-1} \right)'
			&=&
			\left( \frac{|v(t)|^{p-1}+v(t)^{\langle p-1 \rangle}}{2} \right)'
			=
			\frac{p-1}{2} \left(v(t)^{\langle p-2 \rangle} Lv(t) + |v(t)|^{p-2} Lv(t)\right)
			\\
			&=&
			(p-1) (v(t)_+)^{p-2} Lv(t).
		\end{eqnarray*}
		By Lemma~\ref{lem:product}, $u(t)(v(t)_+)^{p-1}$ is continuously differentiable in $L^1(\Rd)$ and
		\begin{align}\label{eq:diff-l1}
			\left( u(t)(v(t)_+)^{p-1} \right)'
			=
			(v(t)_+)^{p-1}  Lu(t)  + (p-1) u(t)(v(t)_+)^{p-2} Lv(t).
		\end{align}
		In particular, $\left(u(t)(v(t)_+)^{p-1}\right)'$ is
		 well-defined and continuous in $L^1(\R^d)$. As in
		\eqref{eq:ddtintmod},
		\begin{align}
			W(t)
			&:=
			\frac{\rm d}{\dt} \int\limits_\Rd \left(u(t)(v(t)_+)^{p-1} + |U(t)|^p\right)\,\dx
			=
			\int\limits_\Rd  \frac{\rm d}{\dt}\left[u(t)(v(t)_+)^{p-1} + |U(t)|^p \right]\,\dx
			\nonumber \\ \label{eq:LpRangle}
			&=
			\langle Lu(t), (v(t)_+)^{p-1} \rangle  + \langle Lv(t), (p-1) u(t)(v(t)_+)^{p-2} \rangle  + \langle L U(t), pU(t)^{\langle p-1 \rangle}\rangle.
		\end{align}
		Since the limits defining $Lu$,  $Lv$ (respectively, $LU$) exist strongly in $L^p(\Rd)$ (respectively, in $L^p(\Rd;\R^2)$) and $(v(t)_+)^{p-1}$,  $u(t)(v(t)_+)^{p-2}$ (respectively, $U(t)^{\langle p-1\rangle}$)
		belong to $L^q(\Rd)$ (respectively, to $L^q(\Rd;\R^2)$), we get
		\begin{align*}
			W(t)=\lim_{h\to 0^+}  \intRd\intRd & \Big((u(t)(y)-u(t)(x))(v(t){(x)}_+)^{p-1}\\
			& + (p-1) (v(t)(y)-v(t)(x))u(t)(x)(v(t)(x)_+)^{p-1}\\
			& +p(U(t)(y)-U(t)(x))\cdot U(t)(x)^{\langle p-1\rangle}\Big)
			\frac{p_h(x,y)}{h}\,\dx\dy.
		\end{align*}
		As $(P_t)_{t\geq 0}$ is conservative, for every $h>0$, we have
		\begin{align*}
			\intRd |U(t)|^p\,\dx = \intRd P_h \left(|U(t)|^p\right)\,\dx
		\end{align*}
		and
		\begin{align*}
			\intRd u(t)(v(t)_+)^{p-1} \,\dx = \intRd P_h\left(u(t)(v(t)_+)^{p-1}\right) \,\dx.
		\end{align*}
		Taking this into account and rearranging, we get 
		\[W(t)=\lim_{h\to 0^+} \intRd\intRd \left(\mathcal J_p^{(+)} +\mathcal F_p\right)(U(t)(x),U(t)(y))\,\frac{p_h(x,y)}{h}\,\dx\dy.\]
		Because of the assumption $f\ge   0$, we have $U(t)\in [0,\infty)\times\R$ for all $x\in\Rd$ and $t\geq 0$,  so that
		$\left(\mathcal J_p^{(+)} +\mathcal F_p\right)(U(t)(x),U(t)(y))$ is nonnegative
		(see the discussion preceding the proposition). Therefore
		from \eqref{eq:mp}, \eqref{eq:zp}, and  Lemma~\ref{lem:Fatou}, we conclude that
		\begin{equation}\label{eq:Wt}
			W(t)=\intRd\intRd \left(\mathcal J_p^{(+)} +\mathcal F_p\right)(U(t)(x),U(t)(y))\nu(x,y) \,\dx\dy.
		\end{equation}
		Since $u(t)(v(t)_+)^{p-1}+|U(t)|^p$ is continuously differentiable  in $L^1(\Rd)$ for  $t\in {[0,\infty)}$,  $W(t)$ is a continuous (real) function on $(0,\infty)$. Thus,
		\begin{eqnarray*}
			&&\intRd \Big( u(0)(v(0)_+)^{p - 1} + |U(0)|^p \Big)\,\dx
			-
			\intRd \left(u(T)((v(T))_+)^{p-1} + |U(T)|^p\right)\,\dx	\\
			&=&
		-\int_0^T W(t)\,\dt
			=
			\int_0^T \intRd \intRd
			\left(
			\mathcal{J}_p^{(+)}
			+ \mathcal{F}_p
			\right)
			(U(t)(x),U(t)(y))
			\nu(x,y)
			\,\dx\dy\dt\\
			&=&
			\int_0^T \intRd \intRd
			\left(
			\mathcal{J}_p^{(+)}
			+ \mathcal{F}_p
			\right)
			(P_t\Phi(x),P_t\Phi(y))
			\nu(x,y)
			\,\dx\dy\dt.
		\end{eqnarray*}	
		We now let $T\to\infty$. As the integrand in the right-hand side is nonnegative, $u(0)=f$,  $v(0)=g$, and $U(0)=\Phi$,   to prove \eqref{eq:HSp} it is enough to show that
		$$
		\intRd \left(u(T)((v(T))_+)^{p-1} + |U(T)|^p\right) \,\dx=\intRd \left(P_Tf((P_Tg)_+)^{p-1} + |\mP_T\Phi|^p\right) \,\dx \to 0.
		$$
		While proving  Theorem~\ref{thm:HS15},  we have already shown that $\intRd |U(T)|^p \,\dx \to 0$.  Further,
		since $|P_Tf(x)|\le f^*(x)$ and $|P_Tg(x)|\le g^*(x)$ for every $x\in \Rd$ and $T>0$ and $f^*,g^*\in\Lp$ by \eqref{eq:stein}, we get  $\intRd P_Tf((P_Tg)_+)^{p-1} \,\dx \to 0$ by the Dominated Convergence Theorem.
		This yields \eqref{eq:HSp} for $f,g\in\DpL$.
		
		It remains to  get rid of the assumption $f,g\in\DpL$. We proceed as in the proof of Theorem~\ref{thm:HS15}. Take  $f,g\in\Lp$  arbitrary and let $s>0$. Since $(P_t)_{t \geq 0}$ is an analytic semigroup on $\Lp$, $P_s f,P_s g\in\DpL$ as well. Consequently, by \eqref{eq:HSp},
		\begin{align*}
			\intRd & \left(P_sf ((P_sg)_+)^{p - 1} + |\mP_s\Phi|^p \right) \,\dx
			\\
			&=
			\int_s^\infty \intRd \intRd
			\left(
			\mathcal{J}_p^{(+)}
			+ \mathcal{F}_p
			\right)
			(P_t\Phi(x),P_t\Phi(y))
			\nu(x,y)
			\,\dx\dy\dt.
		\end{align*}
		Let $s\to0^+$.
		As the integrand of the right-hand side is nonnegative, the integrals tend to
		the right-hand side  of \eqref{eq:HSp}.

		To get the convergence of the left-hand side we use the strong continuity of $(P_t)_{ t \geq 0}$ in $\Lp$. The convergence $|\mP_s\Phi|^p\to|\Phi|^p$ in $L^1(\Rd)$ was shown in proof of Theorem~\ref{thm:HS15}.
		Since $P_sf\to f$  and
		$(P_sg)_+\to g_+$ in $\Lp$,
		by Lemma~\ref{lem:new-cont},
		$((P_sg)_+)^{p - 1}\to (g_+)^{p - 1}$ in $L^{\frac{p}{p-1}}(\Rd)$.
		Moreover, by  Lemma~\ref{thm:continuity}, $P_sf ((P_sg)_+)^{p - 1}\to f(g_+)^{p - 1}$ in $L^1(\Rd)$.
		Thus,
		$\intRd \left(P_sf ((P_sg)_+)^{p - 1} + |\mP_s\Phi|^p \right) \,\dx \to \intRd \left( f(g_+)^{p - 1} + |\Phi|^p \right) \,\dx$.
		The proof of \eqref{eq:HSp} is complete.
	\end{proof}

	\begin{proof}[Proof of Theorem~\ref{thm:polarized}]
		Thanks to Remark~\ref{r.p2}, we only need to consider  $p>2$. Let first $f\geq0$. By Proposition~\ref{prop:HSpm} and \eqref{eq:Jppm},
		\begin{align*}
			\intRd & fg^{\langle p - 1 \rangle} \,\dx
			=
			\intRd \left( f(g_+)^{p - 1} + |\Phi|^p \right) \,\dx
			-
			\intRd \left( f(g_-)^{p - 1} + |\Phi|^p \right) \,\dx
			\\
			&=
			\int_0^\infty \intRd \intRd
			\left(
			\mathcal{J}_p^{(+)}
			+ \mathcal{F}_p
			\right) (P_t\Phi(x),P_t\Phi(y))
			\nu(x,y)
			\,\dx\dy\dt
			\\
			&-
			\int_0^\infty \intRd \intRd
			\left(
			\mathcal{J}_p^{(-)}
			+ \mathcal{F}_p
			\right) (P_t\Phi(x),P_t\Phi(y))
			\nu(x,y)
			\,\dx\dy\dt
			\\
			&=
			\int_0^\infty \intRd \intRd
			\mathcal{J}_p (P_t\Phi(x),P_t\Phi(y))
			\nu(x,y)
			\,\dx\dy \dt,
		\end{align*}
		where all the integrals are absolutely convergent.
		Therefore \eqref{eq:int-abs} holds
		in this case.
		
		To get rid of  the assumption $f\geq0$, we consider an arbitrary $f\in\Lp$ and write $f=f_+-f_-$.
		The result holds for pairs $\Phi^{(+)}:=(f_+,g)$  and $\Phi^{(-)}:=(f_-,g)$.  Of course,
		$\Phi = \Phi^{(+)}-\Phi^{(-)}$.
		The operators $P_t$ are linear and the function $\mathcal J_p(w,z)$ is linear in $w_1$ and $z_1$, so
		\begin{eqnarray*}
			\intRd fg^{\langle p - 1 \rangle} \,\dx
			&=&
			\intRd \left(f_+\right)g^{\langle p - 1 \rangle} \,\dx
			-
			\intRd \left(f_-\right)g^{\langle p - 1 \rangle} \,\dx
			\\
			&=&
			\int_0^\infty \intRd \intRd
			\mathcal{J}_p (P_t\Phi^{(+)}(x),P_t\Phi^{(+)}(y))
			\nu(x,y)
			\,\dx\dy\dt
			\\
			&\quad & -
			\int_0^\infty \intRd \intRd
			\mathcal{J}_p (P_t\Phi^{(-)}(x),P_t\Phi^{(-)}(y))
			\nu(x,y)
			\,\dx\dy\dt
			\\
			&=&
			\int_0^\infty \intRd \intRd
			\mathcal{J}_p (P_t\Phi(x),P_t\Phi(y))
			\nu(x,y)
			\,\dx\dy\dt.
		\end{eqnarray*}
		The absolute convergence of the integrals is clear from our previous arguments.
	\end{proof}

We next present a quantitative version of \eqref{eq:int-abs}.
	
	\begin{prop}\label{coro:h-l} Under the assumptions of Theorem~\ref{thm:polarized},
		\begin{align}\label{eq:h-l}
			\int\limits_0^\infty\int\limits_\Rd\int\limits_\Rd \left|\mathcal J_p(P_t\Phi(x),P_t\Phi(y))\right| \nu(x,y) \,\dx\dy\dt
			\leq
			(1+2^{p/2})\|f\|_{\Lp}\|g\|_{\Lp}^{p-1}.
		\end{align}
	\end{prop}
	
	\begin{proof}
As in the proof of Theorem~\ref{thm:polarized}, we let $\Phi^{(+)}=(f_+,g)$ and $\Phi^{(-)}=(f_-,g)$.
		Then,
		\[
		\mathcal J_p(P_t\Phi(x),P_t\Phi(y)) =
		\mathcal J_p(P_t\Phi^{(+)}(x),P_t\Phi^{(+)}(y))-
		\mathcal J_p(P_t\Phi^{(-)}(x),P_t\Phi^{(-)}(y)),
		\]
		so
		\[
		\left|\mathcal J_p(P_t\Phi(x),P_t\Phi(y)) \right|\leq
		\left|\mathcal J_p(P_t\Phi^{(+)}(x),P_t\Phi^{(+)}(y))\right|+
		\left|\mathcal J_p(P_t\Phi^{(-)}(x),P_t\Phi^{(-)}(y))\right|.\]	
Because of \eqref{eq:Jppm},
		\[
		\left|\mathcal{J}_p\right| \leq \left(\mathcal{J}_p^{(+)}+\mathcal F_p\right) + \left(\mathcal{J}_p^{(-)}+\mathcal F_p\right),
		\]
		both terms being nonnegative on $\left([0,\infty)\times\R\right)^2$.
		As $P_t\Phi^{(+)}, P_t\Phi^{(-)}\in \left([0,\infty)\times\R\right)^2$,
		\begin{eqnarray*}
			\left|\mathcal{J}_p(P_t\Phi^{(+)}(x),P_t\Phi^{(+)}(y))\right|& \leq& \left(\mathcal{J}_p^{(+)}+\mathcal F_p\right)(P_t\Phi^{(+)}(x),P_t\Phi^{(+)}(y))\\
			&&+ \left(\mathcal{J}_p^{(-)}+\mathcal F_p\right)(P_t\Phi^{(+)}(x),P_t\Phi^{(+)}(y)),
		\end{eqnarray*}
		and a similar inequality holds for $P_t\Phi^{(-)}$.
		From Proposition~\ref{prop:HSpm},
		\[\int\limits_0^\infty\int\limits_\Rd\int\limits_\Rd (\mathcal J_p^{(+)}+\mathcal F_p)(P_t\Phi^{(+)}(x),P_t\Phi^{(+)}(y)) \nu(x,y) \,\dx\dy\dt= \int\limits_\Rd \left(f_+(g_+)^{p-1}+|\Phi^{(+)
		}|^p
		\right) \,\dx.\]
		A similar identity holds for $\mathcal J_p^{(-)}$.
Summing up, we get
		\[
		\int_0^\infty\intRd\intRd \left|\mathcal J_p(P_t\Phi^{(+)}(x),P_t\Phi^{(+)}(y))\right| \nu(x,y) \,\dx\dy\dt\leq \intRd \left(f_+|g|^{p-1}+2|\Phi^{(+)
		}|^p
		\right) \,\dx\]
		and 
		\[
		\int_0^\infty\intRd\intRd \left|\mathcal J_p(P_t\Phi(x),P_t\Phi(y))\right| \nu(x,y) \,\dx\dy\dt\leq \intRd \left(|f| |g|^{p-1}+2|\Phi
		|^p
		\right) \,\dx.\]		
By	H\"{o}lder inequality,
		\[\intRd f|g|^{p-1} \,\dx \leq \|f\|_{\Lp} \|g\|_{\Lp}^{(p-1)}.\]
		On the other hand,
		\[ |\Phi|^p= (f^2+g^2)^{p/2}\leq 2^{p/2-1}(|f|^p+|g|^p).\]
		Therefore, if $\|f\|_{\Lp}=\|g\|_{\Lp}=1$,  then \eqref{eq:h-l} is true if we replace its right-hand side by $1+2^{p/2}$.
		If $\|f\|_{\Lp}=0$ or $\|g\|_{\Lp}=0$,  then \eqref{eq:h-l} is obvious. Otherwise, we observe that $\mathcal J_p$ is homogeneous in the first coordinates, and $(p-1)$-homogeneous in the second, to wit,
		\[\mathcal J_p((\lambda w_1,\mu w_2),(\lambda z_1, \mu z_2))=
		\lambda \mu^{\langle p-1\rangle} \mathcal J_p((w_1,w_2), (z_1,z_2)), \qquad \lambda,\mu>0.\]
		Then, by considering $f/\|f\|_{\Lp}$ and $g/\|g\|_{\Lp}$, we get the result.
		
	\end{proof}

\section{Polarized Sobolev--Bregman form $\mathcal E_p(u,v)$}
\label{sec:form-polarized}

	The integral expression appearing in \eqref{eq:polarized} and Theorem~\ref{thm:polarized}, namely
	\begin{equation*}
		\E_p(u,v) := \frac{1}{p} \intRd\intRd \mathcal J_p(\Phi(x),\Phi(y))\nu(x,y)\,\dx\dy,  
	\end{equation*}
	where $\Phi(x)=(u(x),v(x))$, $u,v\!:\mathbb R^d\to\mathbb R$, $p\in [2,\infty)$, and $\mathcal J_p$
	is given by \eqref{eq:Jp},  deserves further attention. If $u=v$ then $\mathcal E_p(u,v)=\mathcal E_p(u,u)=\mathcal E_p[u]$. For $p=2$,  we get $\mathcal E_2(u,v)$, the usual (bilinear) Dirichlet form \cite{MR2778606}, in particular,  it is symmetric. For $p>2$,  in general $\E_p(v,u)\neq \E_p(u,v)$ and we are even puzzled by
	the question whether the integral in \eqref{eq:polarized} is well-defined for general enough functions $u,v$, for instance 
for $u,v\in \mathcal D(\mathcal E_p)$.	
	The next theorem  asserts that for $p\geq2$ and $u,v\in\DpL$, \eqref{eq:polarized} is well-defined; we also get an extension of the single-function formula \eqref{eq:pfag} from Proposition~\ref{thm:EpDpL}.
	
	\begin{thm}
		\label{thm:EpDpL2}
		Let $p\geq2$. If $u,v\in\DpL$, then $\mathcal E_p(u,v)$ is well-defined and
		\begin{align}
			\label{eq:pfag2}
			\mathcal E_p(u,v)
			=
			-\frac{1}{p} \langle Lu, v^{\langle p-1 \rangle} \rangle
			-\frac{1}{p} \langle Lv, (p-1)u|v|^{p-2} \rangle.
		\end{align}
	\end{thm}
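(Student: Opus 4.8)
The plan is to follow the template of Proposition~\ref{thm:EpDpL} and Theorem~\ref{thm:polarized}, reducing everything to the differentiability computation already carried out in the proof of Proposition~\ref{lem:HSpm}. First I would fix $u,v\in\DpL$, put $\Phi=(u,v)$, and apply the identities \eqref{eq:HSp} and \eqref{eq:HSm} to the pairs $(u_+,v)$ and $(u_-,v)$ after reducing to $u\ge 0$ exactly as in the proof of Theorem~\ref{thm:polarized}; this gives absolute convergence of $\int_0^\infty\int\int \mathcal J_p(\mP_t\Phi(x),\mP_t\Phi(y))\nu(x,y)\,\dx\dy\dt$ and hence, in particular, that $\mathcal E_p(u,v)=\E_p(\Phi,\Phi)$ is an absolutely convergent integral, i.e.\ well-defined.

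Next I would extract \eqref{eq:pfag2} from the intermediate step in the proof of Proposition~\ref{lem:HSpm}. There, for $u,v\in\DpL$, the quantity
\[
W(t)=\frac{\rm d}{\dt}\intRd\left(u(t)(v(t)_+)^{p-1}+|U(t)|^p\right)\dx
\]
was shown to equal both $\langle Lu(t),(v(t)_+)^{p-1}\rangle+\langle Lv(t),(p-1)u(t)(v(t)_+)^{p-2}\rangle+\langle LU(t),pU(t)^{\langle p-1\rangle}\rangle$ (see \eqref{eq:LpRangle}) and $\intRd\intRd(\mathcal J_p^{(+)}+\mathcal F_p)(U(t)(x),U(t)(y))\nu(x,y)\,\dx\dy$ (see \eqref{eq:Wt}); by Proposition~\ref{thm:EpDpL} applied to $U=(u,v)\in(\DpL)^2$, the last bracket $\langle LU(t),pU(t)^{\langle p-1\rangle}\rangle$ equals $-p\,\E_p[U(t)]=-p\intRd\intRd \mathcal F_p(U(t)(x),U(t)(y))\nu(x,y)\,\dx\dy$ (the vector version of \eqref{eq:pfag}, which follows by summing the scalar identity over coordinates). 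Subtracting cancels the $\mathcal F_p$ terms and yields, at $t=0$,
\[
\langle Lu,(v_+)^{p-1}\rangle+\langle Lv,(p-1)u(v_+)^{p-2}\rangle
=\intRd\intRd \mathcal J_p^{(+)}(\Phi(x),\Phi(y))\nu(x,y)\,\dx\dy,
\]
and the analogous identity with $(\cdot)_-$ and $\mathcal J_p^{(-)}$. Subtracting these two and using $\mathcal J_p=\mathcal J_p^{(+)}-\mathcal J_p^{(-)}$, $v^{\langle p-1\rangle}=(v_+)^{p-1}-(v_-)^{p-1}$, and $|v|^{p-2}=(v_+)^{p-2}+(v_-)^{p-2}$ on $\{v\ne 0\}$, I obtain
\[
\langle Lu,v^{\langle p-1\rangle}\rangle+\langle Lv,(p-1)u|v|^{p-2}\rangle
=\intRd\intRd \mathcal J_p(\Phi(x),\Phi(y))\nu(x,y)\,\dx\dy=p\,\E_p(u,v),
\]
which is \eqref{eq:pfag2}. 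Finally I would remove the sign restriction $u\ge0$ by the same linearity-in-the-first-coordinate argument used at the end of the proof of Theorem~\ref{thm:polarized}, splitting $u=u_+-u_-$ and noting that both sides of \eqref{eq:pfag2} are linear in $u$.

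The main subtlety is not the formal cancellation but justifying that the pointwise/$L^1$ differentiation identity $W(0)=\langle Lu,(v_+)^{p-1}\rangle+\cdots$ can be evaluated at $t=0$ and matched with the double-integral representation there; this is precisely what the proof of Proposition~\ref{lem:HSpm} established using Lemma~\ref{lem:product}, Corollary~\ref{lem:pdif} with $\kappa=p-1$ (valid since $p\ge2$, so $1<p-1\le p$), and the Fatou-type Lemma~\ref{lem:Fatou} together with \eqref{eq:mp} and \eqref{eq:zp}. A secondary point is that one must invoke the \emph{vector} form of \eqref{eq:pfag} for $U\in(\DpL)^2$; since $\E_p[U]=\sum_{j}\E_p[\text{(coordinate)}]$ is not literally true (the integrand $\mathcal F_p$ on $\R^2$ is not a coordinate sum), I would instead derive $\langle LU,pU^{\langle p-1\rangle}\rangle=-p\,\E_p[U]$ directly from Corollary~\ref{lem:pdiff-n}(i) with $\kappa=p$ and the conservativeness argument of \eqref{eq:ddtintmod}--\eqref{eq:diff15}, exactly as in the proof of Theorem~\ref{thm:HS15}, so that no separation into coordinates is needed.
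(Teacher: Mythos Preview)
Your argument has a genuine gap in removing the sign restriction $u\ge 0$. The identities \eqref{eq:LpRangle}--\eqref{eq:Wt} from the proof of Proposition~\ref{lem:HSpm} were derived under the assumption $f\ge 0$, so that $P_t\Phi(x)\in[0,\infty)\times\R$ and the nonnegativity of $\mathcal{J}_p^{(+)}+\mathcal{F}_p$ on $([0,\infty)\times\R)^2$ from Lemma~\ref{lem:Ypm} can be invoked in the Fatou-type limit. Your plan is then to obtain \eqref{eq:pfag2} for nonnegative $u\in\DpL$ and extend to general $u\in\DpL$ by writing $u=u_+-u_-$ and using linearity of both sides in the first argument. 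But this requires applying the already-proved case to $u_+$ and $u_-$, which in turn demands $u_\pm\in\DpL$ so that $Lu_\pm$ makes sense on the left-hand side---and for a general $u\in\DpL$ the positive and negative parts need not lie in $\DpL$. The linearity trick at the end of the proof of Theorem~\ref{thm:polarized} succeeds there because that statement only needs $f_\pm\in\Lp$; here you need membership in the domain of the generator, which does not survive taking positive parts.

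This is precisely why the paper introduces a \emph{second} layer of decomposition, writing $\mathcal{J}_p^{(+)}=\mathcal{J}_p^{(++)}-\mathcal{J}_p^{(-+)}$ (see \eqref{eq:jotpp}--\eqref{eq:Jpppmm}) and invoking Lemma~\ref{lem:Jpppmmpositive}, which asserts $\mathcal{J}_p^{(\pm+)}+\mathcal{F}_p\ge 0$ on \emph{all} of $\R^2\times\R^2$, not merely on $([0,\infty)\times\R)^2$. With this, the conservativeness-plus-Fatou argument can be run directly for arbitrary $u\in\DpL$ without any sign restriction, yielding \eqref{eq:lpp} and \eqref{eq:lmp}; subtracting twice then gives \eqref{eq:pfag2}. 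Your first paragraph has a smaller but related problem: Theorem~\ref{thm:polarized} furnishes finiteness of the time-integrated quantity $\int_0^\infty\!\int\!\int|\mathcal{J}_p(P_t\Phi)|\,\nu\,\dx\dy\dt$, which does not by itself imply that the $t=0$ slice $\int\!\int|\mathcal{J}_p(\Phi)|\,\nu\,\dx\dy$ is finite. In the paper, well-definedness of $\E_p(u,v)$ is instead a by-product of the decomposition: each of the integrals $\int\!\int(\mathcal{J}_p^{(\pm+)}+\mathcal{F}_p)(\Phi)\,\nu\,\dx\dy$ equals a finite pairing and is nonnegative.
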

	
	Note that this agrees with \eqref{eq:pfag} if $u=v$.
	Before we prove \eqref{eq:pfag2}, we need to further decompose $\mathcal J_p^{(+)}$ (and $\mathcal J_p$)  into a difference of  nonnegative functions.
	
	Let $\indyk(a):=(1+\sgn(a))/2$ be the Heaviside step function. We define
	\begin{eqnarray}\label{eq:jotpp}
		\mathcal{J}_p^{(++)}(w,z)
		&:=&
		(z_1)_+\left((z_2)_+\right)^{p-1} - (w_1)_+\left((w_2)_+\right)^{p-1}-\indyk(w_1)\left((w_2)_+\right)^{p-1}(z_1-w_1)
		\nonumber
		\\
		&&-(p-1)(w_1)_+\left((w_2)_+\right)^{p-2}(z_2-w_2),
		\\
		\label{eq:jotmp}
		\mathcal{J}_p^{(-+)}(w,z)
		&:=&
		(z_1)_-\left((z_2)_+\right)^{p-1} - (w_1)_-\left((w_2)_+\right)^{p-1}+\indyk(-w_1)\left((w_2)_+\right)^{p-1}(z_1-w_1)
		\nonumber
		\\
		&&-(p-1)(w_1)_-\left((w_2)_+\right)^{p-2}(z_2-w_2),
	\end{eqnarray}
	where $w:=(w_1,w_2), z:=(z_1,z_2)\in \R^2$.
	We may view these functions as the second-order Taylor remainders of the mappings $\R^2\ni z\mapsto (z_1)_+\left((z_2)_+\right)^{p-1}$ and  $\R^2\ni z\mapsto (z_1)_-\left((z_2)_+\right)^{p-1}$, respectively, except for nondifferentiability of the mappings on the vertical positive semi-axis (for more details, see the proof of Lemma~\ref{lem:Jpppmmpositive} in Appendix~\ref{sec:apx}).
	
	Similarly to \eqref{eq:Jppm} and \eqref{eq:Jpsym}, we get a decomposition of $\mathcal J_p^{(+)}$:
	\begin{align}
		\label{eq:Jpppmm}
		\mathcal{J}_p^{(+)}= \mathcal{J}_p^{(++)} - \mathcal{J}_p^{(-+)}
	\end{align}
	and the identity
	\begin{align}
		\label{eq:Jpsym2}
		\mathcal{J}_p^{(++)}(-\bar{w},-\bar{z})
		=
		\mathcal{J}_p^{(-+)}(w,z).
	\end{align}
	In Lemma~\ref{lem:Jpppmmpositive} in Appendix~\ref{sec:apx} we prove that
	\begin{align*}
		\mathcal{J}_p^{(++)}(w,z)+\mathcal{F}_p(w,z) \ge 0, \quad
		\mathcal{J}_p^{(-+)}(w,z)+\mathcal{F}_p(w,z) \ge 0
	\end{align*}
	for all $z,w\in\R^2$.
	Therefore, by adding and subtracting $\mathcal F_p$ in  \eqref{eq:Jpppmm}, we get the desired decomposition of $\mathcal J_p^{(+)}$ and we can proceed from there.
	Let us mention that it is crucial to define the Heaviside function so that $\indyk(0)=1/2$. This is because we use the identity $\indyk(a)+\indyk(-a)=1$ for all $a\in\R$ to derive \eqref{eq:Jpppmm}.

	\begin{proof}[Proof of Theorem~\ref{thm:EpDpL2}.]
		Let $u,v\in\mathcal D_p(L)$.  If $p=2$,
		then, again,  the identity is evident from
		classical polarization,
		\eqref{eq:E2} and \eqref{eq:pfag}.
		
		Thus, we let $p>2$. Denote $\Phi(x):=(u(x),v(x))$. First we prove
		the following:
		\begin{eqnarray}
			\label{eq:lpp}
			l_{++}
			&:=&
			-\langle Lu,\indyk(u)(v_+)^{p-1}\rangle
			-
			\langle Lv,(p-1)u_+(v_+)^{p-2}\rangle
			-
			\langle  L\Phi,p\Phi^{\langle p-1 \rangle}\rangle
			\\ \nonumber
			&\ =&
			\intRd\intRd
			(\mathcal J_p^{(++)}+\mathcal F_p)(\Phi(x),\Phi(y))
			\nu(x,y)\,\dx\dy
		\end{eqnarray}
		and
		\begin{eqnarray}
			\label{eq:lmp}
			l_{-+}
			&:=&
			\langle Lu,\indyk(-u)(v_+)^{p-1}\rangle
			-
			\langle Lv,(p-1)u_-(v_+)^{p-2}\rangle
			-
			\langle  L\Phi,p\Phi^{\langle p-1 \rangle}\rangle
			\\ \nonumber
			&\ =&
			\intRd\intRd
			(\mathcal J_p^{(-+)}+\mathcal F_p)(\Phi(x),\Phi(y))
			\nu(x,y)\,\dx\dy.
		\end{eqnarray}
		We start with the proof of \eqref{eq:lpp}.
		By the definition of $ L$,
		\begin{align}
			\langle  L\Phi,p\Phi^{\langle p-1 \rangle}\rangle
			= \lim_{h\to 0^+}
			\intRd \intRd
			(\Phi(x) - \Phi(y)) \cdot p\Phi(x)^{\langle p-1 \rangle}
			\frac{p_h(x,y)}{h}
			\,\dx\dy.
		\end{align}
		Since the limits defining $Lu$,  $Lv$ exist in the strong sense in $L^p(\Rd)$,  we have
		\begin{align*}
			\nonumber
			l_{++}
			=&
			\lim_{h\to0^+}
			\intRd\intRd\left[(u(x)-u(y))\indyk(u(x))(v(x)_+)^{p-1}
			\right.
			\\ \nonumber
			&\quad\qquad + (v(x)-v(y))(p-1)u(x)_+(v(x)_+)^{p-2}
			\\
			&\quad\qquad \left. + (\Phi(x)-\Phi(y))\cdot p \Phi(x)^{\langle p-1\rangle}\right] \frac{p_h(x,y)}{h}
			\,\dx\dy.
		\end{align*}
		Then, similarly as in the proofs of Theorems~\ref{thm:HS15} and \ref{thm:polarized}, we take advantage of the conservativeness of the semigroup $(P_t)_{t\geq 0}$:
		\begin{eqnarray*}
			\intRd u_+(v_+)^{p-1}\,\dx &=& \intRd P_h\left( u_+(v_+)^{ p-1}\right)\,\dx,
			\\
			\intRd|\Phi|^p\,\dx &=& \intRd P_h\left(|\Phi|^p\right)\,\dx,\quad\mbox{ for } h>0.
		\end{eqnarray*}
		Taking this into account and rearranging, we obtain
		\[
		l_{++} = \lim_{h\to 0^+} \intRd\intRd (\mathcal J_p^{(++)}+\mathcal F_p)(\Phi(x),\Phi(y))\,\frac{p_h(x,y)}{h}\,\dx\dy.
		\]
		From Lemma~\ref{lem:Jpppmmpositive} in Appendix~\ref{sec:apx}, $\mathcal{J}_p^{(++)}+\mathcal{F}_p\ge 0$, hence we can pass to the limit as $h\to 0^+$ and by Lemma~\ref{lem:Fatou} we obtain \eqref{eq:lpp}.
		By substituting $-u$ in place of $u$, we obtain \eqref{eq:lmp}, too; see \eqref{eq:Fpsym} and \eqref{eq:Jpsym2}.

		Further, we claim that
		for all $u,v\in\DpL$,
		\begin{eqnarray}
			\label{eq:lp}
			l_+
			&:=&
			-\langle Lu,(v_+)^{p-1}\rangle
			-
			\langle Lv,(p-1)u(v_+)^{p-2}\rangle
			\\ \nonumber
			&\ =&
			\intRd\intRd
			\mathcal J_p^{(+)}(\Phi(x),\Phi(y))
			\nu(x,y)\,\dx\dy
		\end{eqnarray}
		and
		\begin{eqnarray}
			\label{eq:lm}
			l_-
			&:=&
			-\langle Lu,(v_-)^{p-1}\rangle
			+
			\langle Lv,(p-1)u(v_-)^{p-2}\rangle
			\\ \nonumber
			&\ =&
			\intRd\intRd
			\mathcal J_p^{(-)}(\Phi(x),\Phi(y))
			\nu(x,y)\,\dx\dy.
		\end{eqnarray}
		Indeed, using \eqref{eq:lpp}, \eqref{eq:lmp}, and \eqref{eq:Jpppmm}, we get
		\begin{eqnarray*}
			l_+
			=
			l_{++} - l_{-+}
			&=&
			\intRd\intRd
			(\mathcal J_p^{(++)}+\mathcal F_p)(\Phi(x),\Phi(y))
			\nu(x,y)\,\dx\dy
			\\
			&&
			-
			\intRd\intRd
			(\mathcal J_p^{(-+)}+\mathcal F_p)(\Phi(x),\Phi(y))
			\nu(x,y)\,\dx\dy
			\\
			&=&
			\intRd\intRd
			\mathcal J_p^{(+)}(\Phi(x),\Phi(y))
			\nu(x,y)\,\dx\dy.
		\end{eqnarray*}
		Note that the integral on the right-hand side is well-defined  as a difference of finite integrals with nonnegative integrands. This yields \eqref{eq:lp}.
		Equality \eqref{eq:lm} follows from \eqref{eq:lp} by substituting $-v$ in place of $v$; see \eqref{eq:Fpsym} and \eqref{eq:Jpsym}.
		
		To conclude, using \eqref{eq:lp}, \eqref{eq:lm}, and \eqref{eq:Jppm}, we obtain
		\begin{align}\nonumber
			-\langle &Lu, v^{\langle p-1 \rangle} \rangle
			-\langle Lv, (p-1)u|v|^{p-2} \rangle
			=
			l_+ - l_-
			=
			\\
			&=
			\intRd\intRd
			\mathcal J_p(\Phi(x),\Phi(y))
			\nu(x,y)\,\dx\dy =
			p\mathcal E_p(u,v).\label{e.dvJp}
		\end{align}
		Again,  the integral defining $\mathcal E_p(u,v)$ is absolutely convergent as a difference of two absolutely convergent integrals. The proof is complete.
	\end{proof}

\begin{rem}\label{r.sum}
By the above and Lemma~\ref{lem:otco}, 
\begin{eqnarray*}	
			p\mathcal E_p(f,g)
			&:=&
			\intRd\intRd
			\mathcal J_p(f(x),g(x); f(y), g(y))
			\nu(x,y)\,\dx\dy
			\\
			&\ =&
			\langle -Lf, g^{\langle p-1 \rangle} \rangle +
			\langle -Lg, (p-1)f|g|^{p-2} \rangle
			\\
			&\ =&
			-\frac{\rm d}{\dt}\intRd P_tf(x) (P_tg(x))^{\langle p - 1 \rangle} \,\dx\Big|_{t=0} ,
		\end{eqnarray*}
at least for $f,g\in \DpL$ and $p\geq 2$.
At this moment, Lemma~\ref{lem:otco} offers a simplifying perspective on \eqref{eq.HScd} and Theorem~\ref{thm:polarized}, but we should emphasize the importance of absolute integrability asserted in Theorem~\ref{thm:polarized} for arbitrary $f,g\in L^p(\Rd)$ when $p\ge2$; see also Proposition~\ref{coro:h-l}.
\end{rem}

	\smallskip

	\appendix
	
	\section{Estimates for Bregman divergence}\label{sec:App-A}

	The following lemma extends Lemma 5 of \cite{MR4372148}, where scalar versions of \eqref{eq:restpowers}, \eqref{eq:difpowers}, \eqref{eq:diffrench} were given. The inequality \eqref{eq:restfrench} seems new.
	\begin{lem}\label{lem:inequalities-1}
		There are constants $C_\kappa,C_\kappa',C_\kappa'',C_\kappa'''\in(0,\infty)$ such that for all $w,z\in\Rn$,
		\begin{align}
			\label{eq:restpowers}
			0\leq \mathcal F_\kappa(w,z)
			&\leq
			C_\kappa |z-w|^\lambda ({|w|\vee |z|})^{\kappa-\lambda}, \quad \lambda\in[0,2], \kappa>1,
			\\
			\label{eq:restfrench}
			|\mathcal F_{\langle\kappa\rangle}(w,z)|
			&\leq
			C_\kappa' |z-w|^\lambda ({|w|\vee |z|})^{\kappa-\lambda},\quad \lambda\in[0,2],  \kappa>1,
			\\
			\label{eq:difpowers}
			||z|^\kappa - |w|^\kappa|
			&\leq
			C_\kappa'' |z-w|^\lambda ({|w|\vee |z|})^{\kappa-\lambda}, \quad \lambda\in[0,1], \kappa>0,
			\\
			\label{eq:diffrench}
			|z^{\langle \kappa \rangle} - w^{\langle \kappa \rangle}|
			&\leq
			C_\kappa'''
			|z-w|^\lambda ({|w|\vee |z|})^{\kappa-\lambda},  \quad \lambda\in[0,1], \kappa>0.
		\end{align}
	\end{lem}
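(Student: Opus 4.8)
The plan is to establish, for each of the four quantities, two endpoint estimates — a trivial ``$\lambda=0$'' bound and a sharp bound at $\lambda=2$ (for $\mathcal F_\kappa$ and $\mathcal F_{\langle\kappa\rangle}$) or at $\lambda=1$ (for $|z|^\kappa-|w|^\kappa$ and $z^{\langle\kappa\rangle}-w^{\langle\kappa\rangle}$) — and then interpolate in $\lambda$. The nonnegativity in \eqref{eq:restpowers} is just the convexity of $z\mapsto|z|^\kappa$ for $\kappa>1$, already noted after Definition~\ref{def:bregman}.

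For the interpolation step, suppose a quantity $Q(w,z)$ satisfies $|Q(w,z)|\le A:=C|z-w|^{\mu}(|w|\vee|z|)^{\kappa-\mu}$ for $\mu\in\{1,2\}$ together with the crude bound $|Q(w,z)|\le B:=C(|w|\vee|z|)^{\kappa}$. Writing $\lambda=\mu\theta$ with $\theta\in[0,1]$ and using $|Q|=|Q|^{\theta}|Q|^{1-\theta}\le A^{\theta}B^{1-\theta}$, one gets $|Q|\le C|z-w|^{\lambda}(|w|\vee|z|)^{\kappa-\lambda}$, because the exponents of $|w|\vee|z|$ add up to $(\kappa-\mu)\theta+\kappa(1-\theta)=\kappa-\lambda$. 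This reduces \eqref{eq:restpowers}, \eqref{eq:restfrench} (range $\lambda\in[0,2]$) and \eqref{eq:difpowers}, \eqref{eq:diffrench} (range $\lambda\in[0,1]$) to their respective endpoints. The ``$\lambda=0$'' bounds are routine: each of the four expressions is a finite sum of terms that are either a homogeneous monomial of degree $\kappa$ in $|w|,|z|$ or of the form $|z-w|$ times a monomial of degree $\kappa-1$, and $|z-w|\le2(|w|\vee|z|)$; for the term $J_{\langle\kappa\rangle}(w)(z-w)$ in $\mathcal F_{\langle\kappa\rangle}$ one uses the operator-norm bound $\|J_{\langle\kappa\rangle}(w)\|\le\max(\kappa,1)\,|w|^{\kappa-1}$ read off from \eqref{eq:diff-french-n}.

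For the sharp endpoints I would exploit the joint $\kappa$-homogeneity of both sides of each inequality, $Q(sw,sz)=s^{\kappa}Q(w,z)$ for $s>0$, to reduce to the compact set $K=\{(w,z):|w|\vee|z|=1\}$, where the claim becomes $|Q(w,z)|\le C|z-w|^{\mu}$. On $K$ I split according to whether $|z-w|\ge\delta$ or $|z-w|<\delta$ for a small fixed $\delta\in(0,1)$ depending only on $\kappa$. In the first regime $K\cap\{|z-w|\ge\delta\}$ is compact and $Q$ is continuous there — note that $w\mapsto|w|^{\kappa}$, $w\mapsto w^{\langle\kappa\rangle}$ and $w\mapsto J_{\langle\kappa\rangle}(w)$ all extend continuously by $0$ at the origin in the relevant ranges of $\kappa$ — so $|Q|$ is bounded while $|z-w|^{\mu}\ge\delta^{\mu}$, which gives the bound with a constant independent of $\lambda$. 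In the second regime $|w|,|z|\ge1-\delta$, hence the segment $[w,z]$ lies in an annulus $\{\tfrac14\le|\zeta|\le2\}$ on which $z\mapsto z^{\langle\kappa\rangle}$ and $z\mapsto|z|^{\kappa}$ are smooth with bounded derivatives, so Taylor's formula yields $|\mathcal F_\kappa|,\,|\mathcal F_{\langle\kappa\rangle}|\le C|z-w|^{2}$ and $\big||z|^{\kappa}-|w|^{\kappa}\big|,\,|z^{\langle\kappa\rangle}-w^{\langle\kappa\rangle}|\le C|z-w|$ there; since $\lambda\le\mu$ and $\delta<1$, $|z-w|^{\mu}\le|z-w|^{\lambda}\delta^{\mu-\lambda}\le|z-w|^{\lambda}$. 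Alternatively, the $\lambda=2$ endpoint of \eqref{eq:restpowers} is immediate from Lemma~\ref{lem:Fp2Gp2}, the scalar case of \eqref{eq:difpowers} follows from \cite[Lemma~5]{MR4372148} applied to the reals $|w|,|z|$ together with $\big||z|-|w|\big|\le|z-w|$, and for $\kappa\ge2$ the $\lambda=2$ endpoint of \eqref{eq:restfrench} can also be obtained from $\mathcal F_{\langle\kappa\rangle}(w,z)=\int_0^1\big(J_{\langle\kappa\rangle}(w+t(z-w))-J_{\langle\kappa\rangle}(w)\big)(z-w)\,\dt$ and the Lipschitz bound $\|\nabla J_{\langle\kappa\rangle}(\zeta)\|\le C|\zeta|^{\kappa-2}$ on the convex ball $\{|\zeta|\le|w|\vee|z|\}$.

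The main obstacle is \eqref{eq:restfrench}, which is new even for $n=1$: it concerns the second-order Taylor remainder of the vector field $z\mapsto z^{\langle\kappa\rangle}$, which for $1<\kappa<2$ is only $C^{1}$ (its Jacobian $J_{\langle\kappa\rangle}$ is merely $(\kappa-1)$-H\"older near the origin), so the naive integral-remainder argument gives only $|\mathcal F_{\langle\kappa\rangle}|\le C|z-w|^{\kappa}$, which after interpolation with the $\lambda=0$ bound covers $\lambda\in[0,\kappa]$ but not $\lambda=2$ when $[w,z]$ passes near the origin. The homogeneity-and-compactness scheme above circumvents this precisely because on $K$ the diagonal is pinned to the unit sphere, away from the origin, where $z^{\langle\kappa\rangle}$ is smooth; I would therefore organize the proof of \eqref{eq:restfrench} around that scheme.
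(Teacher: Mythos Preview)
Your proposal is correct and takes a genuinely different route from the paper's proof. Both arguments begin by reducing to the maximal value of $\lambda$ --- the paper via the elementary inequality $|z-w|\le|z-w|^{\mu}(|z|+|w|)^{1-\mu}$, you via the equivalent $|Q|^{\theta}|Q|^{1-\theta}$ interpolation --- and the paper also invokes Lemma~\ref{lem:Fp2Gp2} for \eqref{eq:restpowers}. After that the approaches diverge. The paper normalizes by $|w|^{\kappa}$ (not $(|w|\vee|z|)^{\kappa}$), introduces the polar-type variables $t=|z|/|w|$, $v_{1}=w/|w|$, $v_{2}=z/|z|$, $\beta=1-(v_{1},v_{2})$, and splits into the ranges $t\in[0,\tfrac12]\cup[2,\infty)$ versus $t\in(\tfrac12,2)$, carrying out explicit algebraic estimates in each regime; the delicate case is \eqref{eq:restfrench}, where the paper squares both sides and uses the scalar bounds $F_{\kappa}(1,t)\le C(1-t)^{2}$ and $|1-t^{\kappa-1}|\le C|1-t|$ on $(\tfrac12,2)$. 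Your scheme instead normalizes to the compact set $\{|w|\vee|z|=1\}$ and replaces all the explicit algebra by a soft compactness-plus-Taylor dichotomy: away from the diagonal use boundedness on a compact set, near the diagonal observe that both endpoints (and hence the whole segment $[w,z]$) lie in an annulus bounded away from the origin, where the maps $\zeta\mapsto|\zeta|^{\kappa}$ and $\zeta\mapsto\zeta^{\langle\kappa\rangle}$ are smooth. This is more conceptual, avoids computation, and handles the problematic range $1<\kappa<2$ in \eqref{eq:restfrench} cleanly, precisely because on your normalized set the near-diagonal regime never approaches the singularity at the origin. The paper's hands-on approach, by contrast, makes the constants in principle trackable and displays explicitly how the angular defect $\beta$ and the radial ratio $t$ interact. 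Both proofs are complete; yours is shorter to write but less quantitative.
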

	\begin{proof}
		It suffices to prove the inequalities for the maximal
		value of $\lambda$ (equal to 2 in \eqref{eq:restpowers},  \eqref{eq:restfrench}, and equal to 1 in \eqref{eq:difpowers}, \eqref{eq:diffrench}). Indeed, for other values of $\lambda$,  it is enough to use the inequality $|z-w|\le|z-w|^\mu(|z|+|w|)^{1-\mu}$,  $\mu\in (0,1), $
		$w,z\in\R^n$.
		
		Inequality \eqref{eq:restpowers} follows from \eqref{eq:Fp2Gp2}. In particular, for $a,b\in\R$,  $\lambda=2$, we have
		\begin{equation}\label{eq:real-ineq}
			0\leq F_\kappa(a,b)=|b|^\kappa-|a|^\kappa-\kappa a^{\langle \kappa-1\rangle}(b-a)\leq C_\kappa|b-a|^2(|b|\vee |a|)^{\kappa-2}.
		\end{equation}
		To get the other inequalities, observe that they are obvious for $w=0$. For $w\neq 0$, we divide by $|w|^\kappa$ and, denoting $t:=|z|/|w|\in [0,\infty)$,  $v_1:= w/|w|\in \Sn$,  $v_2:=z/|z|\in \Sn$,  we arrive at the following equivalent statements
		of \eqref{eq:restfrench}, \eqref{eq:difpowers}, \eqref{eq:diffrench}:
		\begin{align}   \label{eq:nowe-2}
			|t^\kappa v_2- v_1-\left((\kappa-1)v_1\otimes v_1 +\mbox{Id}\right)(tv_2-v_1)|&\leq
			C_\kappa'|tv_2-v_1|^2 (1\vee t)^{\kappa -2},\\
			\label{eq:nowe-3}
			{|t^\kappa-1|}&\leq C_\kappa''{|tv_2-v_1|} (1\vee t)^{\kappa -1},\\
			\label{eq:nowe-4}
			{|t^\kappa v_2- v_1|}&\leq C_\kappa'''{|tv_2-v_1|}(1\vee t)^{\kappa -1}.
		\end{align}
		We have
		\begin{equation}\label{eq:nowe-right1}
			{|tv_2-v_1|}^2(1\vee t)^{\kappa-2}=\left((1-t)^2+2t(1-( v_1,v_2))\right)(1\vee t)^{\kappa-2}.
		\end{equation}
		If we square the right-hand sides of \eqref{eq:nowe-3} and \eqref{eq:nowe-4} then, up to a constant, we get
		\begin{equation}\label{eq:nowe-right2}
			{|tv_2-v_1|}^2(1\vee t)^{2\kappa-2}=\left((1-t)^2+2t(1-( v_1,v_2))\right)(1\vee t)^{2\kappa-2}.
		\end{equation}
		Denote $\beta=1-( v_1,v_2)\in [0,2]$,  so that  \eqref{eq:nowe-2} becomes
		\begin{equation}
			\label{neq:nowe-2.2}
			|(t^\kappa-t)v_2 + (\kappa-1)\left((1-t)+\beta t\right)v_1|\leq C_\kappa'\left((1-t)^2+2\beta t\right)({1\vee t})^{\kappa -2}.
		\end{equation}
		This inequality is evident when $t$ is away from 1, say, $t\in [0,\frac{1}{2}]$ or $t\in [2,\infty)$.
		Indeed, for $t\leq 1/2$,  we estimate the left-hand side by ${2}\kappa$,  while the function on the right-hand side is not smaller than $\left(\frac{1}{2}\right)^2$, and \eqref{eq:nowe-2} follows.
		When $t\geq 2$,  then the left-hand side is not {greater} than ${(2\kappa-1)}t^\kappa$,  and for the right-hand side, we get
		\[\left((1-t)^2+ 2\beta t\right)({1\vee t})^{\kappa-2} \geq
		\left(\frac{t}{2}\right)^2 t^{\kappa-2}.\]
		
		To deal with the remaining range $t\in (1/2,2)$,  we square both sides of \eqref{neq:nowe-2.2}.
		The left-hand side yields
		\begin{eqnarray}
			\nonumber
			&&|(t^\kappa -t)v_2 + (\kappa -1)((1-t) +t(1-{(} v_1,v_2{)}))v_1 |^2\\
			\label{eq:nowe-2-1}
			&=& |F_\kappa(1,t)+ (\kappa -1)\beta t|^2 -2(t^\kappa-t)(\kappa-1)
			((1-t)+t\beta))\beta.
		\end{eqnarray}
		In view of \eqref{eq:real-ineq}, the first term on the right-hand side of \eqref{eq:nowe-2-1} is bounded above by $(C_\kappa(1-t)^2 + (\kappa-1)t\beta)^2$.
		Since
		the right-hand side of \eqref{eq:nowe-2}
		is then  not smaller than  a constant multiple of $((1-t)^2 +2\beta t)$, we get the estimate of this part.
		
		For the other term, we use the estimate $\left|1- t^{\kappa-1}\right|\leq C(\kappa)\left|1-t\right|$, $t\in(1/2,2]$, so
		$$2(t-t^\kappa)(\kappa-1)((1-t)+t\beta )\beta \leq C [(t-1)^2 t\beta + 4t^2\beta^2]\leq
		C((1-t)^2+2\beta t)^2.$$
		The estimate \eqref{eq:nowe-2} follows.
		
		After squaring its sides, the proof of \eqref{eq:nowe-4} is reduced to verifying
		\[({t^\kappa-1})^2+2\beta t^\kappa \leq C \left((1-t)^2+ 2\beta t\right)(1\vee t)^{2\kappa-2},\]
		with a constant $C$, uniformly in $\beta\in[0,2]$. This is done like before. For $t\geq 1$,
		\[({t^\kappa-1})^2\leq C (1-t)^2 t^{2\kappa-2}.\]
		For $0\leq t\leq 1/2$, the left-hand side is bounded and the right-hand side is bounded away from zero (uniformly in $\beta\in[0,2])$,  while
		for $t\in (1/2,1]$ we use the inequality $t^\kappa\leq C(\kappa)t$,  $t\in (1/2,1)$.  The square of the left-hand side of \eqref{eq:nowe-3} is smaller than $(1-t)^2 +2\beta t^\kappa$, i.e., the square of the left-hand side of \eqref{eq:nowe-4}. The proof is complete.
	\end{proof}

	\section{Calculus in $L^p$}\label{sec:App-B}
	Let $p \in [1,\infty)$ be fixed. In the discussion of the multivariate Hardy\nobreakdash--Stein identity above we use the  differential calculus in the Banach space
	\[L^p(\R^d;\R^n) := \left\{\Upsilon\!:\R^d\to\R^n \mbox{ measurable, } \intRd|\Upsilon(x)|^p\,\dx<\infty\right\},\quad n=1,2,\ldots,\]
	with the norm
	$\|\Upsilon\|_{L^p(\Rd;\Rn)}:=\left(\int_{\R^d}|\Upsilon(x)|^p\,\dx\right)^{1/p}$,
	or
	\[\|\Upsilon\|_{\ell_2^n(L^p(\R^d))} := \left(\sum_{i=1}^n \|\upsilon_i\|_{L^p(\Rd)}^2\right)^{1/2},\]
	where $\Upsilon=(\upsilon_1,\ldots,\upsilon_n)$, $\upsilon_1,\ldots,\upsilon_n\in L^p(\Rd)$.
	The norms are comparable:
	\begin{eqnarray*}
		\left(\intRd|\Upsilon(x)|^p\,\dx \right)^\frac{1}{p}
		&=&
		\left( \intRd \left(\sum_{i=1}^n |\upsilon_i(x)|^2\right)^\frac{p}{2}\,\dx\right)^\frac{1}{p}
		\leq
		\left( \intRd \left(\sum_{i=1}^n |\upsilon_i(x)|\right)^{p}\,\dx\right)^\frac{1}{p}\\
		&=&
		\||\upsilon_1|+\ldots+|\upsilon_n|\|_{\Lp}\leq \sum_{i=1}^n \|\upsilon_i\|_{\Lp} \leq \sqrt n \|\Upsilon\|_{\ell_2^n(L^p(\R^d))}.
	\end{eqnarray*}

	\smallskip

	Let $\Upsilon\in L^p(\Rd;\R^n)$ and $\Psi\in L^q(\Rd;\R^n)$, where $p,q\in(1,\infty)$ with $\frac{1}{p}+\frac{1}{q}=1$. We consider the canonical pairing
	\begin{align}
		\langle \Upsilon,\Psi \rangle := \intRd \Upsilon(x) \cdot \Psi(x) \,\dx
		= \sum_{j=1}^n \intRd \upsilon_j(x) \psi_j(x) \,\dx.
	\end{align}
	
	\smallskip
	
	For a mapping
	$[0,\infty) \ni t \mapsto \Upsilon(t) \in L^p(\Rd;\Rn)$,
	we denote
	$$\Delta_h\Upsilon(t) = \Upsilon(t+h)-\Upsilon(t) \quad \textnormal{provided} \,\, t, t+h\ge 0.$$
	As usual,
	$\Upsilon$ is called \emph{continuous}
	at $t_0\ge 0$ if $\Delta_h \Upsilon(t_0) \to 0$  in $L^p(\R^d;\R^n)$ as $h\to 0$.
	Furthermore,
	$\Upsilon$ is called \emph{differentiable}
	at $t_0\ge 0$ if the limit
	\begin{equation}\label{eq:def-diff}
		\lim_{h \to 0} \frac{1}{h} \Delta_h \Upsilon(t_0) =:\Upsilon'(t_0)
	\end{equation} exists in $L^p(\R^d;\R^n)$.
	If $\Upsilon'(t)$ defined by \eqref{eq:def-diff} is continuous at $t=t_0$,  then we say that $\Upsilon$ is {\em continuously differentiable} at $t_0$.
	In other words, $\Upsilon'(t_0)$ is the Fr\'{e}chet derivative of the mapping $[0,\infty)\ni t \mapsto \Upsilon(t)$ at $t_0$;
$\Upsilon'(0)$ denotes the right-hand side derivative at $0$.
		Clearly, if $\Upsilon$ is continuously differentiable on $[0,\infty)$, then $\Upsilon$ is continuous on $[0,\infty)$.
	
	Of course, $\Upsilon=(\upsilon_1,\ldots,\upsilon_n)$ is continuous
	(respectively, differentiable, continuously differentiable) in $L^p(\Rd;\Rn)$
	if and only if all the functions
	$\upsilon_i, $ $i=1,\ldots,n$,
	are	continuous
	(respectively, differentiable, continuously differentiable) in $L^p(\Rd)$.
	
	\smallskip
	
	We next present a series of auxiliary lemmas.
	
	\begin{lem}\label{lem:new-cont}
		Let $\kappa\in(0,p]$.  Then the following mappings are continuous:
		\begin{eqnarray}
			\label{eq:new-cont-1}
			L^p(\Rd;\Rn)\ni\Upsilon&\mapsto & \Upsilon^{\langle\kappa\rangle}\in L^{p/\kappa}(\Rd;\Rn), \\
			\label{eq:new-cont-2}
			L^p(\Rd;\Rn)\ni\Upsilon&\mapsto & |\Upsilon|^\kappa\in L^{p/\kappa}(\Rd).		\end{eqnarray}
	\end{lem}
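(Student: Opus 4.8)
The plan is to reduce the continuity of the two nonlinear maps to a single scalar inequality, namely that $\bigl|\,|a|^{\kappa-1}a-|b|^{\kappa-1}b\,\bigr|\le C_\kappa\,|a-b|^{\kappa}$ for $\kappa\in(0,1]$ and $\bigl|\,|a|^{\kappa-1}a-|b|^{\kappa-1}b\,\bigr|\le C_\kappa\,|a-b|\,(|a|\vee|b|)^{\kappa-1}$ for $\kappa>1$, together with the analogous estimate for $|a|^{\kappa}-|b|^{\kappa}$; both are the $n=1$ instances of \eqref{eq:diffrench} and \eqref{eq:difpowers}. In fact it is cleaner to invoke \eqref{eq:diffrench} and \eqref{eq:difpowers} directly in $\R^n$: for $\Upsilon,\Xi\in L^p(\Rd;\Rn)$ they give, pointwise in $x$,
\[
\bigl|\Upsilon(x)^{\langle\kappa\rangle}-\Xi(x)^{\langle\kappa\rangle}\bigr|
\le C_\kappa\,|\Upsilon(x)-\Xi(x)|^{\lambda}\,(|\Upsilon(x)|\vee|\Xi(x)|)^{\kappa-\lambda}
\]
and the same with $|\,\cdot\,|^\kappa$ on the left, where I will choose $\lambda=\min(\kappa,1)$ so that the exponents $\kappa-\lambda\ge 0$ are nonnegative.

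First I would treat the case $\kappa\le 1$, taking $\lambda=\kappa$: then the bound reads $|\Upsilon^{\langle\kappa\rangle}-\Xi^{\langle\kappa\rangle}|\le C_\kappa|\Upsilon-\Xi|^{\kappa}$, so raising to the power $p/\kappa$ and integrating gives
\[
\|\Upsilon^{\langle\kappa\rangle}-\Xi^{\langle\kappa\rangle}\|_{L^{p/\kappa}(\Rd;\Rn)}^{p/\kappa}
\le C_\kappa^{p/\kappa}\intRd|\Upsilon(x)-\Xi(x)|^{p}\,\dx
= C_\kappa^{p/\kappa}\,\|\Upsilon-\Xi\|_{L^p(\Rd;\Rn)}^{p},
\]
which is even (H\"older-)Lipschitz continuity of the map in \eqref{eq:new-cont-1}, and the identical computation handles \eqref{eq:new-cont-2}. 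Next, for $1<\kappa\le p$ I take $\lambda=1$, so the integrand to control is $|\Upsilon-\Xi|^{p}(|\Upsilon|\vee|\Xi|)^{p(\kappa-1)/\kappa}$; I would apply H\"older's inequality with exponents $\kappa$ and $\kappa/(\kappa-1)$ to split it, obtaining
\[
\|\Upsilon^{\langle\kappa\rangle}-\Xi^{\langle\kappa\rangle}\|_{L^{p/\kappa}(\Rd;\Rn)}^{p/\kappa}
\le C_\kappa^{p/\kappa}\,\|\Upsilon-\Xi\|_{L^p(\Rd;\Rn)}^{p/\kappa}
\bigl(\|\Upsilon\|_{L^p}\vee\|\Xi\|_{L^p}\bigr)^{p(\kappa-1)/\kappa},
\]
and likewise for $|\,\cdot\,|^\kappa$. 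Continuity at a fixed $\Upsilon$ follows: if $\Xi\to\Upsilon$ in $L^p$, then $\|\Xi\|_{L^p}$ stays bounded and $\|\Upsilon-\Xi\|_{L^p}\to0$, so the right-hand side tends to $0$.

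The only mild subtlety — and the step I would be most careful about — is making sure \eqref{eq:diffrench} and \eqref{eq:difpowers} are available in the generality I need: the stated ranges are $\lambda\in[0,1]$, $\kappa>0$, which covers exactly $\lambda=\min(\kappa,1)$ in both of my cases, so there is no gap, but I should note that when $\kappa=p$ the target space is $L^1(\Rd)$ and the H\"older split still has valid conjugate exponents $\kappa,\kappa/(\kappa-1)$. I would also remark that continuity of $\Upsilon\mapsto\Upsilon^{\langle\kappa\rangle}$ componentwise is equivalent to the vector statement, so one could alternatively reduce to $n=1$; but invoking the $\R^n$ form of Lemma~\ref{lem:inequalities-1} directly is shorter. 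No further ingredients are needed, and the argument is essentially a two-line estimate once the elementary inequalities are in hand.
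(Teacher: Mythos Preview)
Your argument is correct and essentially the same as the paper's: both invoke the pointwise estimates \eqref{eq:diffrench} and \eqref{eq:difpowers} and then apply H\"older's inequality, the only cosmetic difference being that the paper picks a single $\lambda\in(0,1)$ with $\kappa-\lambda>0$ (and H\"older exponents $\kappa/\lambda$, $\kappa/(\kappa-\lambda)$) rather than splitting into the endpoint choices $\lambda=\kappa$ and $\lambda=1$. Two small slips to clean up: for $\kappa>1$ the integrand after raising to the power $p/\kappa$ is $|\Upsilon-\Xi|^{p/\kappa}(|\Upsilon|\vee|\Xi|)^{p(\kappa-1)/\kappa}$ (not $|\Upsilon-\Xi|^{p}$), and $\bigl\|\,|\Upsilon|\vee|\Xi|\,\bigr\|_{L^p}$ is bounded by $\|\Upsilon\|_{L^p}+\|\Xi\|_{L^p}$ rather than by their maximum---neither affects the continuity conclusion.
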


	\begin{proof}
		First, observe that
		$|\Upsilon|^\kappa$ and $\Upsilon^{\langle\kappa\rangle}$ are in $L^{\frac{p}\kappa}(\Rd)$ and $L^{p/\kappa}(\Rd;\Rn)$, respectively, 
		if
		$\Upsilon \in L^p(\Rd;\Rn)$.
		
		To prove \eqref{eq:new-cont-1}, choose $\lambda\in (0,1)$ such that $\kappa-\lambda>0$ and
		suppose $\Upsilon_k\to \Upsilon$ in $L^p(\Rd;\Rn)$ as $k\to\infty$.
		From \eqref{eq:diffrench} we get, for every $x\in\Rd$,
		\begin{eqnarray*}
			|\Upsilon_k(x)^{\langle \kappa\rangle} -\Upsilon(x)^{\langle \kappa\rangle}| &\leq& C_\kappa'''|\Upsilon_k(x) -\Upsilon(x)|^\lambda (|\Upsilon_k(x)|\vee|\Upsilon(x)|)^{\kappa-\lambda}.
		\end{eqnarray*}
		Using H\"{o}lder's inequality with exponents $\kappa/\lambda$ and $\kappa/(\kappa-\lambda)$, we get
		\begin{eqnarray*}
			\|\Upsilon_k^{\langle \kappa\rangle} - \Upsilon^{\langle \kappa\rangle}\|_{L^{p/\kappa}(\Rd;\R^n)}^{\kappa/p}
			&= &
			\intRd |\Upsilon_k(x)^{\langle \kappa\rangle} -\Upsilon(x)^{\langle \kappa\rangle}|^{\frac{p}{\kappa}}\,\dx\\
			&\leq &
			\intRd \left(C_\kappa'''\right)^{p/\kappa}|\Upsilon_k(x) -\Upsilon(x)|^{\frac{\lambda p}{\kappa}} (|\Upsilon_k(x)|\vee|\Upsilon(x)|)^{\frac{(\kappa-\lambda)p}{\kappa}}\,\dx\\
			&\leq & C
			\|\Upsilon_k-\Upsilon\|_{L^p(\R^d;\R^n)}^{p\lambda/\kappa}
			\cdot
			\left( \|\Upsilon_k\|_{\Lp}^{p(\kappa-\lambda)/\kappa}+\|\Upsilon\|_{\Lp}^{p(\kappa-\lambda)/\kappa}\right).
		\end{eqnarray*}
		The result follows.
		The proof of \eqref{eq:new-cont-2} is similar.
	\end{proof}
	The following generalization of \cite[Lemma 13]{MR4372148} follows from H\"{o}lder's inequality.
	\begin{lem}\label{thm:continuity}
		Let $q \in (1, \infty)$, $r \in \big[\frac{q}{q-1}, \infty \big)$,
		$\Upsilon\in L^q(\Rd;\Rn)$, $\Psi\in L^r(\Rd;\Rn)$. Then
		\[\| \Upsilon\cdot\Psi\|_{L^{\frac{qr}{q+r}}(\Rd;\Rn)} \leq \|\Upsilon\|_{L^p(\Rd;\Rn)} \|\Psi\|_{L^r(\Rd;\Rn)}.\]
		Moreover, if $\Upsilon_n \to \Upsilon$ in $L^q(\Rd;\Rn)$ and $\Psi_n \to \Psi$ in $L^r(\Rd;\Rn)$, then $\Upsilon_n\cdot\Psi_n \to \Upsilon\cdot\Psi$ in $L^{\frac{qr}{q+r}}(\Rd;\R^n)$, as $n\to \infty$.
	\end{lem}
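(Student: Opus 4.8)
The plan is to obtain both assertions directly from the generalized Hölder inequality, after reducing the dot product to a product of scalar functions via Cauchy--Schwarz. First I would record the bookkeeping of exponents. Set $s := \frac{qr}{q+r}$, so that $\frac{1}{s} = \frac{1}{q} + \frac{1}{r}$. The hypothesis $r \ge \frac{q}{q-1}$ is equivalent to $\frac{1}{r} \le 1 - \frac{1}{q}$, hence $\frac{1}{q} + \frac{1}{r} \le 1$ and $s \ge 1$, so that $\|\cdot\|_{L^{s}(\Rd)}$ is a genuine norm. Moreover $\frac{s}{q} + \frac{s}{r} = 1$, i.e.\ the exponents $q/s$ and $r/s$ are conjugate.

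Next, for almost every $x \in \Rd$, Cauchy--Schwarz in $\Rn$ gives $|\Upsilon(x)\cdot\Psi(x)| \le |\Upsilon(x)|\,|\Psi(x)|$. Raising to the power $s$, integrating, and applying Hölder's inequality with the conjugate exponents $q/s$ and $r/s$ to the pair $|\Upsilon|^{s} \in L^{q/s}(\Rd)$, $|\Psi|^{s} \in L^{r/s}(\Rd)$, I get
\[
\|\Upsilon\cdot\Psi\|_{L^{s}(\Rd;\Rn)}^{s}
\le \intRd |\Upsilon(x)|^{s}|\Psi(x)|^{s}\,\dx
\le \big\||\Upsilon|^{s}\big\|_{L^{q/s}(\Rd)}\,\big\||\Psi|^{s}\big\|_{L^{r/s}(\Rd)}
= \|\Upsilon\|_{L^{q}(\Rd;\Rn)}^{s}\,\|\Psi\|_{L^{r}(\Rd;\Rn)}^{s},
\]
which yields the claimed inequality after taking $s$-th roots. (The statement is written with $\|\Upsilon\|_{L^p(\Rd;\Rn)}$, which should read $\|\Upsilon\|_{L^q(\Rd;\Rn)}$.)

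For the convergence statement I would exploit bilinearity. Writing $\Upsilon_n\cdot\Psi_n - \Upsilon\cdot\Psi = (\Upsilon_n - \Upsilon)\cdot\Psi_n + \Upsilon\cdot(\Psi_n - \Psi)$ and applying the inequality just proved to each term,
\[
\|\Upsilon_n\cdot\Psi_n - \Upsilon\cdot\Psi\|_{L^{s}(\Rd;\Rn)}
\le \|\Upsilon_n - \Upsilon\|_{L^{q}(\Rd;\Rn)}\|\Psi_n\|_{L^{r}(\Rd;\Rn)}
+ \|\Upsilon\|_{L^{q}(\Rd;\Rn)}\|\Psi_n - \Psi\|_{L^{r}(\Rd;\Rn)}.
\]
Since $\Psi_n \to \Psi$ in $L^{r}(\Rd;\Rn)$, the norms $\|\Psi_n\|_{L^{r}(\Rd;\Rn)}$ are bounded (by the reverse triangle inequality); the first summand then tends to $0$ because $\|\Upsilon_n - \Upsilon\|_{L^{q}} \to 0$, and the second tends to $0$ because $\|\Psi_n - \Psi\|_{L^{r}} \to 0$. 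Hence $\Upsilon_n\cdot\Psi_n \to \Upsilon\cdot\Psi$ in $L^{s}(\Rd;\Rn)$.

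There is essentially no obstacle here: the result is precisely the bilinear form of the generalized Hölder inequality. The only points deserving a word of care are verifying that $q/s$ and $r/s$ are conjugate exponents and that $s \ge 1$ under the hypothesis $r \ge q/(q-1)$, and the observation that a norm-convergent sequence is norm-bounded.
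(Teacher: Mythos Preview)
Your proof is correct and is exactly the approach the paper indicates: the paper merely states that the lemma ``follows from H\"{o}lder's inequality'' without giving further details, and you have supplied precisely the standard argument via the generalized H\"{o}lder inequality together with the bilinear splitting for the continuity part. Your observation about the typo ($L^p$ should read $L^q$) is also correct.
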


	The next lemma is an extension of \cite[Lemma 15]{MR4372148}, where the result for $|\Upsilon|^\kappa$ was proved for $n=1$,  $\kappa=p$.
	
	\begin{lem}
		\label{thm:difpow}
		Let $1<\kappa\leq p<\infty$ be given.  If $[0,\infty)\ni t \mapsto \Upsilon(t)$ is continuously differentiable in $L^p(\Rd;\Rn)$,  then:
		\begin{enumerate}
			\item[(i)] $|\Upsilon|^\kappa$ is continuously differentiable in $L^{p/\kappa}(\Rd)$ and
			\begin{equation}\label{eq:diff-1}
				(|\Upsilon|^\kappa)' = \kappa \Upsilon^{\langle \kappa - 1 \rangle} \cdot \Upsilon',
			\end{equation}
			\item[(ii)] $\Upsilon^{\langle\kappa\rangle}$ is continuously differentiable in $L^{p/\kappa}(\Rd;\Rn)$ and
			\begin{equation}\label{eq:diff-2}
				\left(\Upsilon^{\langle\kappa\rangle}\right)'= \left(J_{\langle\kappa\rangle}\circ\Upsilon\right)\Upsilon',
			\end{equation}
			with $J_{\langle\kappa\rangle}$ defined in \eqref{eq:diff-french-n}.
		\end{enumerate}
	\end{lem}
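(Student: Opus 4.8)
The plan is to fix $t_0\ge 0$ and work from the two pointwise identities that \emph{define} the Taylor remainders $\mathcal F_\kappa$ and $\mathcal F_{\langle\kappa\rangle}$. With $w=\Upsilon(t_0)$, $z=\Upsilon(t_0+h)$, $h\ne 0$, $t_0+h\ge 0$, dividing by $h$ gives
\[
\frac{|\Upsilon(t_0+h)|^\kappa-|\Upsilon(t_0)|^\kappa}{h}=\kappa\,\Upsilon(t_0)^{\langle\kappa-1\rangle}\cdot\frac{\Delta_h\Upsilon(t_0)}{h}+\frac{\mathcal F_\kappa(\Upsilon(t_0),\Upsilon(t_0+h))}{h}
\]
and
\[
\frac{\Upsilon(t_0+h)^{\langle\kappa\rangle}-\Upsilon(t_0)^{\langle\kappa\rangle}}{h}=\bigl(J_{\langle\kappa\rangle}\!\circ\!\Upsilon(t_0)\bigr)\frac{\Delta_h\Upsilon(t_0)}{h}+\frac{\mathcal F_{\langle\kappa\rangle}(\Upsilon(t_0),\Upsilon(t_0+h))}{h}.
\]
It then suffices to show that, in $L^{p/\kappa}$, the first term on each right-hand side converges to the claimed derivative in \eqref{eq:diff-1}, resp.\ \eqref{eq:diff-2}, while the remainder term converges to $0$; the continuity of the derivatives is treated at the end. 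The linear terms are easy: since $1<\kappa\le p$, we have $\Upsilon(t_0)^{\langle\kappa-1\rangle}\in L^{p/(\kappa-1)}(\Rd;\Rn)$ and, by \eqref{eq:diff-french-n}, $|J_{\langle\kappa\rangle}(\Upsilon(t_0))|\le\kappa|\Upsilon(t_0)|^{\kappa-1}$, so $J_{\langle\kappa\rangle}\!\circ\!\Upsilon(t_0)$ lies in $L^{p/(\kappa-1)}$ as a matrix-valued function; as $\Upsilon$ is differentiable at $t_0$, $\Delta_h\Upsilon(t_0)/h\to\Upsilon'(t_0)$ in $L^p$, and H\"older's inequality with the (legitimate, cf.\ Lemma~\ref{thm:continuity}) exponents $p/(\kappa-1)$ and $p$ yields convergence of both linear terms in $L^{p/\kappa}$ to $\kappa\Upsilon(t_0)^{\langle\kappa-1\rangle}\cdot\Upsilon'(t_0)$ and $\bigl(J_{\langle\kappa\rangle}\!\circ\!\Upsilon(t_0)\bigr)\Upsilon'(t_0)$, respectively.

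The remainder terms are the technical heart. Fix $\lambda$ with $1<\lambda\le\min\{2,\kappa\}$, which is possible since $\kappa>1$. By \eqref{eq:restpowers} and \eqref{eq:restfrench},
\[
\bigl|\mathcal F_\kappa(\Upsilon(t_0),\Upsilon(t_0+h))\bigr|+\bigl|\mathcal F_{\langle\kappa\rangle}(\Upsilon(t_0),\Upsilon(t_0+h))\bigr|\le C\,|\Delta_h\Upsilon(t_0)|^{\lambda}\bigl(|\Upsilon(t_0)|\vee|\Upsilon(t_0+h)|\bigr)^{\kappa-\lambda}.
\]
Writing $|\Delta_h\Upsilon(t_0)|^{\lambda}/h=|\Delta_h\Upsilon(t_0)|^{\lambda-1}\bigl(|\Delta_h\Upsilon(t_0)|/h\bigr)$ and applying H\"older's inequality with the three exponents $p/(\lambda-1)$, $p$, $p/(\kappa-\lambda)$ (whose reciprocals add up to $\kappa/p$, and dropping the last factor when $\kappa=\lambda$), the $L^{p/\kappa}$-norm of the remainder is bounded by a constant times
\[
\|\Delta_h\Upsilon(t_0)\|_{L^p}^{\lambda-1}\;\bigl\|\Delta_h\Upsilon(t_0)/h\bigr\|_{L^p}\;\bigl\||\Upsilon(t_0)|\vee|\Upsilon(t_0+h)|\bigr\|_{L^p}^{\kappa-\lambda}.
\]
As $h\to 0$ the first factor tends to $0$ by continuity of $\Upsilon$ at $t_0$, while the second and third factors stay bounded; hence the remainder tends to $0$ in $L^{p/\kappa}$, which with the previous step establishes \eqref{eq:diff-1} and \eqref{eq:diff-2} (with the right-hand derivative at $t_0=0$).

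Finally, continuity of the derivatives amounts to showing that $t\mapsto\kappa\,\Upsilon(t)^{\langle\kappa-1\rangle}\cdot\Upsilon'(t)$ and $t\mapsto\bigl(J_{\langle\kappa\rangle}\!\circ\!\Upsilon(t)\bigr)\Upsilon'(t)$ are continuous into $L^{p/\kappa}$. Continuity of $t\mapsto\Upsilon(t)^{\langle\kappa-1\rangle}$ into $L^{p/(\kappa-1)}$ is Lemma~\ref{lem:new-cont}; continuity of $t\mapsto J_{\langle\kappa\rangle}\!\circ\!\Upsilon(t)$ into $L^{p/(\kappa-1)}$ follows in the same spirit, using the pointwise continuity of $z\mapsto J_{\langle\kappa\rangle}(z)$ (with $J_{\langle\kappa\rangle}(0)=0$, which is continuity also at $0$ since $\kappa>1$), the domination $|J_{\langle\kappa\rangle}(z)|\le\kappa|z|^{\kappa-1}$, and a dominated-convergence argument along subsequences. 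Combining these with the assumed continuity of $\Upsilon'$ into $L^p$ and the ``moreover'' part of Lemma~\ref{thm:continuity} gives continuity of both derivatives into $L^{p/\kappa}$. The two steps that require care are the remainder estimate (the choice of $\lambda$ and the exponent bookkeeping) and this last matrix-valued continuity statement; everything else is routine H\"older estimation.
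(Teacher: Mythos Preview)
Your proof is correct and follows essentially the same approach as the paper's: both split the difference quotient into the linear term plus the Taylor remainder $\mathcal F_\kappa$ (resp.\ $\mathcal F_{\langle\kappa\rangle}$), control the remainder via \eqref{eq:restpowers}--\eqref{eq:restfrench} with an exponent $\lambda\in(1,2]$, and finish with H\"older. The only cosmetic difference is the factorization of the remainder---the paper writes $|\Delta_h\Upsilon|^\lambda/|h|=|h|^{\lambda-1}|\Delta_h\Upsilon/h|^\lambda$ and uses a two-factor H\"older, while you write it as $|\Delta_h\Upsilon|^{\lambda-1}\,|\Delta_h\Upsilon/h|$ and use a three-factor H\"older; your treatment of the continuity of $t\mapsto J_{\langle\kappa\rangle}\!\circ\!\Upsilon(t)$ is also somewhat more explicit than the paper's terse ``with the limit continuous''.
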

	\begin{proof}
		Both statements are proved similarly, therefore we only prove \eqref{eq:diff-2}, as it is the more complicated of the two.
		
	We choose $\gamma\in(0,\kappa-1)$. Observe that when
        $\Upsilon\neq0$,
        \begin{eqnarray*}
            \left(J_{\langle\kappa\rangle}\circ\Upsilon\right)\Upsilon'
            &=&
            |\Upsilon|^{\kappa-1} \left((\kappa-1)(
			\frac{\Upsilon}{|\Upsilon|}\otimes\frac{\Upsilon}{|\Upsilon|})+\mbox {Id}\right)\Upsilon'
            \\
            &=&
            (\kappa-1) |\Upsilon|^{\kappa-1} (\frac{\Upsilon}{|\Upsilon|}\cdot \Upsilon') \frac{\Upsilon}{|\Upsilon|}
            +
            |\Upsilon|^{\kappa-1} \Upsilon'
            \\
            &=&
            (\kappa-1) (\Upsilon^{\langle \gamma \rangle}\cdot \Upsilon') \Upsilon^{\langle \kappa - 1- \gamma \rangle}
            +
            |\Upsilon|^{\kappa-1} \Upsilon'
        \end{eqnarray*}
        and
        $\left(J_{\langle\kappa\rangle}\circ\Upsilon\right)\Upsilon' = 0$
        whenever $\Upsilon=0$.
        Given that $\Upsilon$ and $\Upsilon'$ are continuous, it follows that the above mapping is also continuous, according to Lemmas \ref{lem:new-cont} and \ref{thm:continuity}.

        Fix $t\ge0$.  
		To prove \eqref{eq:diff-2}, we only need  to verify that for $h\to 0$,
		\begin{align*}
			W_h(t) := \frac{1}{h}\Delta_h \Upsilon^{\langle\kappa\rangle}(t) - (J_{\langle\kappa\rangle}\circ\Upsilon(t)) \frac{1}{h}\Delta_h u(t) \to 0\qquad \mbox{in}\ L^{p/\kappa}(\Rd;\R^n).
		\end{align*}
		Since $W_h(t)=\frac{1}{h} \mathcal F_{\langle\kappa\rangle}(\Upsilon(t),\Upsilon(t+h))$,  we  choose $\lambda\in(1,2]$ such that $\kappa-\lambda>0$,  then use
		the inequality \eqref{eq:restfrench}  to get:
		\begin{eqnarray*}
			|W_h(t)|
			&\leq&
			\frac{1}{|h|} C_\kappa'
			|\Upsilon(t+h)-\Upsilon(t)|^\lambda
			(|\Upsilon(t+h)|\vee|\Upsilon(t)|)^{\kappa-\lambda}
			\\
			&=&
			|h|^{\lambda-1} C_\kappa'
			\left|\frac{1}{h}\Delta_h \Upsilon(t)\right|^\lambda
			(|\Upsilon(t+h)|\vee|\Upsilon(t)|)^{\kappa-\lambda}.
		\end{eqnarray*}
		Furthermore, by H\"{o}lder's inequality with parameters $\kappa/\lambda$ and $\kappa/(\kappa-\lambda)$,
		\begin{eqnarray*}
			\|W_h(t)\|_{{L^{p/\kappa}(\Rd;\R^n)}}
			& \leq &
			C_\kappa |h|^{\lambda-1}
			\left\|
			\left|\frac{1}{h}\Delta_h \Upsilon(t)\right|^\lambda
			(|\Upsilon(t+h)|\vee|\Upsilon(t)|)^{\kappa-\lambda}
			\right\|_{L^{p/\kappa}(\Rd)} \nonumber
			\\
			& \leq &
			C_\kappa |h|^{\lambda-1}\left\|\frac{1}{h}\Delta_h\Upsilon(t)
			\right\|_{L^p(\Rd;\R^n)}^\lambda \cdot \left\||\Upsilon(t+h)|+|\Upsilon(t)|\right\|_{\Lp}^{\kappa-\lambda}.
		\end{eqnarray*}
		We then conclude as in the proof of Lemma~\ref{lem:new-cont}.
	\end{proof}
	
	Finally  we invoke, without proof, an analogue of the Leibniz rule.
	\begin{lem}[\bf Product rule]
		\label{lem:product}
		Let $p>1$ and $r\in \left[\frac{p}{p-1},\infty\right)$ be given.
		If the mappings $[0,\infty)\ni t \mapsto \Upsilon(t)\in L^{p}(\Rd;\R^n)$
		and $[0,\infty)\ni t \mapsto \Psi(t)\in L^{r}(\Rd;\R^n)$ are continuously differentiable in $L^{p}(\Rd;\R^n)$ and $L^{r}(\Rd;\R^n)$, respectively,
		then $\Upsilon\cdot\Psi$ is continuously differentiable in $L^{\frac{pr}{p+r}}(\Rd)$ and $(\Upsilon\cdot\Psi)'= \Upsilon'\cdot \Psi+ \Upsilon\cdot\Psi'$.
	\end{lem}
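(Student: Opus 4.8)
The plan is to prove this Leibniz-type rule directly from the definition of the Fr\'echet derivative in $L^p$, using only H\"older's inequality and the joint continuity of the pairing recorded in Lemma~\ref{thm:continuity}. First I would observe that the hypothesis $r\ge p/(p-1)$ is exactly what guarantees $\tfrac{1}{p}+\tfrac{1}{r}\le 1$, so that $s:=\tfrac{pr}{p+r}\ge 1$ and the target space $L^{s}(\Rd)$ is a genuine Lebesgue space; moreover, by H\"older's inequality (the bound in Lemma~\ref{thm:continuity}), $\Upsilon(t)\cdot\Psi(t)\in L^{s}(\Rd)$ for every $t\ge 0$, so the map $t\mapsto\Upsilon(t)\cdot\Psi(t)$ indeed takes values where claimed.

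The core of the argument is the elementary identity for the difference quotient obtained by adding and subtracting $\Upsilon(t)\cdot\Psi(t+h)$: for $t,t+h\ge 0$,
\[
\frac{1}{h}\Delta_h(\Upsilon\cdot\Psi)(t)
=\Bigl(\tfrac{1}{h}\Delta_h\Upsilon(t)\Bigr)\cdot\Psi(t+h)
+\Upsilon(t)\cdot\Bigl(\tfrac{1}{h}\Delta_h\Psi(t)\Bigr).
\]
Now I would pass to the limit $h\to0$ in $L^{s}(\Rd)$ term by term. In the first term, $\tfrac{1}{h}\Delta_h\Upsilon(t)\to\Upsilon'(t)$ in $L^{p}(\Rd;\Rn)$ by assumption, while $\Psi(t+h)\to\Psi(t)$ in $L^{r}(\Rd;\Rn)$ since $\Psi$, being continuously differentiable, is in particular continuous; hence the convergence half of Lemma~\ref{thm:continuity} gives $\bigl(\tfrac{1}{h}\Delta_h\Upsilon(t)\bigr)\cdot\Psi(t+h)\to\Upsilon'(t)\cdot\Psi(t)$ in $L^{s}(\Rd)$. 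In the second term, $\Upsilon(t)$ is a fixed element of $L^{p}(\Rd;\Rn)$ and $\tfrac{1}{h}\Delta_h\Psi(t)\to\Psi'(t)$ in $L^{r}(\Rd;\Rn)$, so Lemma~\ref{thm:continuity} again yields convergence to $\Upsilon(t)\cdot\Psi'(t)$ in $L^{s}(\Rd)$. Adding the two limits shows that $(\Upsilon\cdot\Psi)'(t)$ exists in $L^{s}(\Rd)$ and equals $\Upsilon'(t)\cdot\Psi(t)+\Upsilon(t)\cdot\Psi'(t)$.

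It then remains to verify that $(\Upsilon\cdot\Psi)'$ is continuous in $L^{s}(\Rd)$, which is immediate: $t\mapsto\Upsilon(t)$ and $t\mapsto\Upsilon'(t)$ are continuous into $L^{p}(\Rd;\Rn)$, $t\mapsto\Psi(t)$ and $t\mapsto\Psi'(t)$ are continuous into $L^{r}(\Rd;\Rn)$, and Lemma~\ref{thm:continuity} shows the pairing $L^{p}(\Rd;\Rn)\times L^{r}(\Rd;\Rn)\to L^{s}(\Rd)$ is jointly (sequentially) continuous, so both products $\Upsilon'\cdot\Psi$ and $\Upsilon\cdot\Psi'$ are continuous and hence so is their sum. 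I do not expect a genuine obstacle here; the only point requiring a little care is that in the first term of the difference quotient \emph{both} factors vary with $h$, so one cannot merely fix one factor and invoke bilinearity of the pairing --- this is precisely why the joint-continuity part of Lemma~\ref{thm:continuity}, rather than just the H\"older bound, is needed. Everything else is the standard add-and-subtract proof of the Leibniz rule transplanted to this Banach-space setting.
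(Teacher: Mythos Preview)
Your argument is correct and is exactly the standard proof one would expect; the paper itself states the lemma ``without proof'' and provides no argument to compare against. The add-and-subtract decomposition of the difference quotient together with the joint continuity of the bilinear map $L^{p}(\Rd;\Rn)\times L^{r}(\Rd;\Rn)\to L^{pr/(p+r)}(\Rd)$ from Lemma~\ref{thm:continuity} is precisely the right tool, and your observation that both factors in the first term vary with $h$ (so joint continuity rather than a mere norm bound is needed) is the only genuinely nonroutine point.
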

	\begin{lem}\label{lem:otco}
	Let $p>1$. If $f,g\in \mathcal D_p(L)$, then for $t\in [0,\infty)$,
	$$\frac{\rm d}{\dt}\int_\Rd P_tf (P_tg)^{\langle p - 1 \rangle}\dx
=\int_\Rd \left( (P_tg)^{\langle p-1\rangle}LP_tf  +(p-1)P_tf|P_tg|^{p-2}LP_tg\right)\dx.$$
 For general $f,g\in L^p(\Rd)$, the formula holds for $t\in (0,\infty)$.
	\end{lem}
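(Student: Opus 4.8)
The plan is to read the left-hand side as $\frac{\rm d}{\dt}\int_\Rd \Upsilon(t)\Psi(t)\,\dx$ with $\Upsilon(t):=P_tf$ and $\Psi(t):=(P_tg)^{\langle p-1\rangle}$, and to produce the derivative of the product $\Upsilon\Psi$ from the $L^p$ calculus developed in this appendix. The first ingredient is that $t\mapsto P_tf$ and $t\mapsto P_tg$ are continuously differentiable in $L^p(\Rd)$ with derivatives $LP_tf=P_tLf$ and $LP_tg=P_tLg$: on $[0,\infty)$ when $f,g\in\DpL$ (as recalled just before Corollary~\ref{lem:pdif}), and on $(0,\infty)$ when merely $f,g\in L^p(\Rd)$, because then $P_tf,P_tg\in\DpL$ for $t>0$ by analyticity of $(P_t)$ on $L^p$. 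Fix such an admissible $t$ throughout.

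Assume first $p\ge2$. Then $\Psi(t)=(P_tg)^{\langle p-1\rangle}$ is continuously differentiable as a map into $L^{p/(p-1)}(\Rd)$ with $\Psi'(t)=(p-1)|P_tg|^{p-2}LP_tg$: for $p>2$ this is exactly Corollary~\ref{lem:pdif}(ii) (equivalently Lemma~\ref{thm:difpow}(ii)) applied with $\kappa=p-1\in(1,p]$, noting $p/\kappa=p/(p-1)$; for $p=2$ it is trivial, as $\Psi(t)=P_tg$. Now apply the product rule, Lemma~\ref{lem:product}, to $\Upsilon(t)\in L^p(\Rd)$ and $\Psi(t)\in L^{r}(\Rd)$ with $r:=p/(p-1)$ (the left endpoint of the admissible range of $r$): since $\tfrac{pr}{p+r}=1$, the product $\Upsilon\Psi$ is continuously differentiable in $L^1(\Rd)$ and
\[(\Upsilon\Psi)'=\Upsilon'\Psi+\Upsilon\Psi'=(P_tg)^{\langle p-1\rangle}LP_tf+(p-1)P_tf\,|P_tg|^{p-2}LP_tg.\]
As integration is a continuous linear functional on $L^1(\Rd)$, we may differentiate under the integral sign, which gives the asserted identity for $p\ge2$ (this is the range actually needed later, e.g.\ in Remark~\ref{r.sum}).

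The remaining range $1<p<2$ is where the real work lies: now $\kappa=p-1\in(0,1)$, the scalar map $s\mapsto s^{\langle p-1\rangle}$ fails to be differentiable at the origin, Corollary~\ref{lem:pdif}(ii) is unavailable, and both sides of the identity may even be infinite near $\{P_tg=0\}$ (where $P_tg\approx h\,LP_tg$ for small time increments forces a contribution growing like $|h|^{p-2}$). I would run the difference quotient by hand, as in the proof of Theorem~\ref{thm:HS15}, splitting
\[\tfrac1h\big(\Upsilon(t+h)\Psi(t+h)-\Upsilon(t)\Psi(t)\big)=\big(\tfrac1h\Delta_h\Upsilon(t)\big)\,\Psi(t+h)+\Upsilon(t)\,\big(\tfrac1h\Delta_h\Psi(t)\big).\]
The first term converges in $L^1(\Rd)$ to $(P_tg)^{\langle p-1\rangle}LP_tf$ by Lemma~\ref{thm:continuity}, since $\tfrac1h\Delta_h\Upsilon(t)\to LP_tf$ in $L^p$ while $\Psi(t+h)\to\Psi(t)$ in $L^{p/(p-1)}$ by Lemma~\ref{lem:new-cont} ($\kappa=p-1\le p$). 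For the second term I would use the pointwise bound $|\Delta_h\Psi(t)|\le C\,|\Delta_hP_tg|\,(|P_{t+h}g|\vee|P_tg|)^{p-2}$ from \eqref{eq:diffrench} (with $\kappa=p-1$, $\lambda=1$) together with the domination $\tfrac1{|h|}|\Delta_hP_tg|\le(LP_tg)^{*}\in L^p(\Rd)$, which follows from $\Delta_hP_tg=\int_t^{t+h}P_{s-t}(LP_tg)\,{\rm d}s$ and Stein's maximal inequality (Lemma~\ref{lem:stein}); the pointwise limit on $\{P_tg\ne0\}$ is $(p-1)P_tf|P_tg|^{p-2}LP_tg$ by differentiability of $s\mapsto s^{\langle p-1\rangle}$ off the origin, and one then passes to the limit through Lemma~\ref{lem:Fatou}. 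I expect the main obstacle to be controlling the dominating function $|P_tf|\,(LP_tg)^{*}\,|P_tg|^{p-2}$ near $\{P_tg=0\}$ and reconciling it with the (possibly infinite, possibly only conditionally defined) value of the right-hand side there; this is precisely where one must exploit $p-1>0$, and, if a cleaner treatment is wanted, reduce to the convex-decomposition device of Section~\ref{sec:polarized}, writing the relevant powers as differences of convex functions with signed but controlled Taylor remainders.
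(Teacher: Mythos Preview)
Your argument for $p\ge 2$ is exactly the paper's: apply Lemma~\ref{thm:difpow} (equivalently Corollary~\ref{lem:pdif}) with $\kappa=p-1$ to get continuous differentiability of $(P_tg)^{\langle p-1\rangle}$ in $L^{p/(p-1)}(\Rd)$, then the product rule Lemma~\ref{lem:product} with $r=p/(p-1)$, then pass the derivative through the bounded linear functional $\int_\Rd$. The extension to $f,g\in L^p(\Rd)$ via analyticity is also identical.

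You are right to flag $1<p<2$: Lemma~\ref{thm:difpow} requires $\kappa>1$, hence $p>2$, and Corollary~\ref{lem:pdif} inherits the same restriction. The paper's own proof, however, does not address this; it simply invokes Lemma~\ref{thm:difpow} with $n=1$ across the full range $p>1$ and supplies no separate argument for $1<p\le 2$. So on that range you are being more careful than the paper, not less. Your sketch there (splitting the difference quotient, using \eqref{eq:diffrench} with $\lambda=1$, and dominating $\tfrac1{|h|}|\Delta_h P_tg|$ via Stein's maximal inequality) is a reasonable line of attack, though as you yourself note the control of $|P_tf|\,(LP_tg)^*\,|P_tg|^{p-2}$ near $\{P_tg=0\}$ is not settled. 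Since the paper only invokes this lemma for $p\ge 2$ (see Remark~\ref{r.sum}), the gap has no downstream effect, and there is nothing in the paper's proof to compare your $1<p<2$ sketch against.
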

	\begin{proof}
	Of course, $P_tf$ and $P_tg$ are continuously differentiable at $t\geq0$ in $L^p(\Rd)$ and $\frac{\rm d}{\dt}P_tf =LP_tf$, $\frac{\rm d}{\dt}P_tg =LP_tg$.
	Hence, by Lemma~\ref{thm:difpow} with $n=1$, $(P_t g)^{\langle p-1\rangle}$ is continuously differentiable  at $t\geq0$ in $L^{\frac{p}{p-1}}(\Rd)$ and $\frac{\rm d}{\dt}(P_tg)^{\langle p - 1 \rangle}=(p-1)|P_tg|^{p-2}LP_tg$. By Lemma~\ref{lem:product} with $r=p/(p-1)$, $P_tf (P_tg)^{\langle p - 1 \rangle}$ is continuously differentiable at $t\geq0$ in $L^1(\Rd)$ and
	\begin{equation}\label{e.L1}
	\frac{\rm d}{\dt}\left(P_tf (P_tg)^{\langle p - 1 \rangle}\right)
=(P_tg)^{\langle p-1\rangle}LP_tf  +(p-1)P_tf|P_tg|^{p-2}LP_tg.
\end{equation}
 Since $u\mapsto \int_\Rd u(x)\,\dx$ is a continuous linear functional on $L^1(\Rd)$, we get the result (the case of arbitrary $f,g\in L^p(\Rd)$ follows since the semigroup $P_t$ is analytic). 
	\end{proof}
	
	\section{Convexity properties}\label{sec:apx}
	We provide here precise statements and proofs of convexity properties needed in Sections~\ref{sec:polarized} and \ref{sec:form-polarized}.
	First, we recall some facts from the theory of convex functions.

	Let $T\!:A\to\R$,  where the set $A\subset\R^n$ is convex.
	By definition, $d(w)\in\R^n$ is a \emph{subgradient} of $T$ at $w\in A$ if 
	\begin{align}
		\label{eq:subgrad}
		T(z) \ge T(w) + d(w)\cdot(z-w)
		\quad\mbox{for all }z\in A.
	\end{align}
	The function $T$ is convex in $A$ if and only if for every $w\in A$, a subgradient $d(w)$ exists.
	If $T$ is convex and the first-order partial derivatives of $T$ exist at some $w\in A$, then $T$ has exactly one subgradient at the point $w$, which is equal to its gradient $\nabla T(w)$.
	Denote by $\frac{\partial T}{\partial v}$  the directional derivative of $T$ along a given vector $v\in\R^n$. When $T$ is convex in $A$,
	we have that $d(w)$ is a subgradient of the function $T$ at the  point $w\in A$  if and only if 
	\begin{align*}
		\frac{\partial T}{\partial v} (w) \ge d(w)\cdot v,\quad v\in\R^n.
	\end{align*}
	For more details see Borwein and Lewis \cite[Chapter 3]{MR2184742}.
	
	We need the following lemma.
	\begin{lem}
		\label{lem:Yconv}
		Let $p\geq2$.
		The function
		\begin{align*}
			Y(z) := z_1 \left(z_2\right)^{p-1} + |z|^p,
			\quad z=(z_1,z_2)\in[0,\infty)^2,
		\end{align*}
		is convex on $[0,\infty)^2$.
	\end{lem}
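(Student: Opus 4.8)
The plan is to prove the claim by showing that the Hessian of $Y$ is positive semidefinite on the open quadrant $(0,\infty)^2$, and then to extend convexity to the closed quadrant $[0,\infty)^2$ by continuity of $Y$ (which is immediate for $p\ge 2$). On $(0,\infty)^2$ the function is smooth, so it suffices to prove $H:=\nabla^2 Y\succeq 0$ there.

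First I would split $H=\nabla^2(z_1 z_2^{p-1})+\nabla^2|z|^p$, where the first summand is the (in general indefinite) matrix recorded in \eqref{rem:z1z2pm1Hessian}, while a direct computation gives $\nabla^2|z|^p=p|z|^{p-2}\,\mathrm{Id}+p(p-2)|z|^{p-4}\,z\otimes z$. Adding the entries yields $H_{11}=p|z|^{p-4}\big((p-1)z_1^2+z_2^2\big)>0$, so by Sylvester's criterion the whole problem reduces to checking that $\det H\ge 0$. Using the identity $\det(M+N)=\det M+\det N+\big(M_{11}N_{22}+M_{22}N_{11}-2M_{12}N_{12}\big)$ for symmetric $2\times 2$ matrices, together with $\det\nabla^2|z|^p=p^2(p-1)|z|^{2p-4}$ and $\det\nabla^2(z_1z_2^{p-1})=-(p-1)^2 z_2^{2p-4}$, a short computation gives
\[
\det H=(p-1)\Big(p^2|z|^{2p-4}-(p-1)z_2^{2p-4}+p(p-2)z_1z_2^{p-3}|z|^{p-4}\big((p-1)z_1^2-z_2^2\big)\Big).
\]

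The one place requiring genuine care is showing that the bracket is positive, and this is where the hypothesis $p\ge 2$ enters. On $[0,\infty)^2$ we have $z_1,z_2\le |z|$ and $2p-4\ge 0$; hence $z_2^{2p-4}\le |z|^{2p-4}$, so we may replace $-(p-1)z_2^{2p-4}$ by the smaller quantity $-(p-1)|z|^{2p-4}$. Expanding $p(p-2)z_1z_2^{p-3}|z|^{p-4}\big((p-1)z_1^2-z_2^2\big)$, the term $p(p-1)(p-2)z_1^3z_2^{p-3}|z|^{p-4}$ is nonnegative and may be dropped, while the remaining negative term $-p(p-2)z_1z_2^{p-1}|z|^{p-4}$ is bounded below by $-p(p-2)|z|^{2p-4}$ via $z_1z_2^{p-1}\le |z|^p$. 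Collecting the surviving terms, the bracket is at least $\big(p^2-p+1-p(p-2)\big)|z|^{2p-4}=(p+1)|z|^{2p-4}$, so $\det H\ge (p^2-1)|z|^{2p-4}>0$ on $(0,\infty)^2$. Together with $H_{11}>0$ this gives $H\succ 0$ there, hence $Y$ is (strictly) convex on $(0,\infty)^2$, and by continuity $Y$ is convex on all of $[0,\infty)^2$. I expect the determinant computation and the short chain of elementary estimates taming the indefinite cross-term to be the only nonroutine part; once the inequality $z_1z_2^{p-1}\le |z|^p$ is brought in, the remainder is bookkeeping.
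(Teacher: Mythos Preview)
Your proof is correct and follows essentially the same route as the paper's: both reduce to showing the Hessian is positive semidefinite on $(0,\infty)^2$ by checking $H_{11}>0$ and $\det H>0$, arrive at the same expression for $\det H$, and bound it below by $(p^2-1)|z|^{2p-4}$ using $z_2\le|z|$ and $z_1z_2^{p-1}\le|z|^p$. The only cosmetic differences are that you organize the determinant computation via the sum-of-matrices identity, and you invoke $z_1z_2^{p-1}\le|z|\cdot|z|^{p-1}=|z|^p$ directly, whereas the paper routes this through Young's inequality.
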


	\begin{proof}
		As $Y$ is continuous on $[0,\infty)^2$, it is enough to prove the convexity on $(0,\infty)^2$.
		Recall 
		\eqref{eq:Nabla|z|^p}
		and
		\begin{align*}
			\nabla^2|z|^p
			=
			p(p-2)|z|^{p-4}
			\begin{bmatrix}
				z_1^2 & z_1 z_2 \\
				z_1z_2 & z_2^2
			\end{bmatrix}
			+
			p|z|^{p-2} \mbox{Id}
			,\quad
			z\in\R^2\setminus\{0\}.
		\end{align*}
		The Hessian $\nabla^2\left(z_1z_2^{p-1}\right)$ is calculated in \eqref{rem:z1z2pm1Hessian}.
		The Hessian  $\nabla^2 Y (z)$ of $Y$ is:
		\begin{align*}
			\begin{bmatrix}
				{p|z|^{p-2} + p(p-2)z_1^2|z|^{p-4} } & (p-1)z_2^{p-2} + p(p-2)z_1z_2|z|^{p-4} \\
				(p-1)z_2^{p-2} + p(p-2)z_1z_2|z|^{p-4} & (p-1)(p-2)z_1z_2^{p-3} + { p|z|^{p-2} + p(p-2)z_2^2|z|^{p-4} }
			\end{bmatrix}
		\end{align*}
		We will verify that for $z\in(0,\infty)^2$, the matrix is positive semi-definite.
		Clearly,
		$$\left[ p|z|^{p-2} + p(p-2)z_1^2|z|^{p-4} \right]>0.$$
		Moreover,
		after long, but elementary, calculations we get:
		\begin{eqnarray*}
			\det \nabla^2 Y (z)
			&=&
			\left[ p|z|^{p-2} + p(p-2)z_1^2|z|^{p-4} \right](p-1)(p-2)z_1z_2^{p-3}
			\\
			&&+
			p^2|z|^{2p-4} + p^2(p-2)|z|^{2p-4}
			\\
			&&-
			(p-1)^2z_2^{2p-4} - p(p-1)(p-2)z_1z_2^{p-1}|z|^{p-4}
			\\
			&=&
			p^2(p-1)|z|^{2p-4}-(p-1)^2 z_2^{2p-4}
			\\
			&&+
			p(p-1)(p-2)|z|^{p-4}\left((p-1)z_1^3 z_2^{p-3}-z_1z_2^{p-1}\right).
		\end{eqnarray*}
		We have $z_2\leq |z|$,  so applying Young's inequality with exponents $p$ and $q=p/(p-1)$ to the product $z_1z_2^{p-1}$  we obtain
		\begin{align*}
			z_1z_2^{p-1}
			&\leq
			\frac{z_1^p}{p} + \frac{(p-1)z_2^p}{p}
			=
			\frac{1}{p} (z_1^2)^\frac{p}{2} + \frac{p-1}{p} (z_2^2)^\frac{p}{2} \leq |z|^p
			.
		\end{align*}
		Summarizing,
		\begin{eqnarray*}
			\det \nabla^2 Y (z) &\geq& p(p-1)^2(p-2)|z|^{p-4}z_1^3z_2^{p-3}\\
			&&+ |z|^{2p-4}(p-1)\left(p^2-p(p-2)-(p-1)\right)\\
			&=& p(p-1)^2(p-2)|z|^{p-4}z_1^3z_2^{p-3} +|z|^{2p-4}(p-1)(p+1)>0.
		\end{eqnarray*}
	\end{proof}
	
	If $w_1\le z_1, w_2\le z_2, \ldots, w_k\le z_k$ implies $T(w_1,\ldots, w_k)\le T(z_1,\ldots,z_k)$ in the domain of a~real-valued function $T$, then we say $T$ is \emph{coordinate-wise nondecreasing}. The following fact is self-explanatory, see also Boyd and Vandenberghe \cite[Section 3.2.4]{MR2061575}.
	\begin{lem}
		\label{fact:convcomp}
		Let $S\!:A\to\R^k$, $S(A)\subset B$, and $T\!:B\to\R$, where $A\subset\R^n$ and $B\subset\R^k$ are convex. If each coordinate of $S$ is convex and $T$ is coordinate-wise nondecreasing  and convex, then the composition $T\circ S\!:A\to\R$ is convex.
	\end{lem}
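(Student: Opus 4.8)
The plan is to verify the defining inequality of convexity of $T\circ S$ directly, chaining together the three hypotheses: convexity of the coordinates of $S$, coordinate-wise monotonicity of $T$, and convexity of $T$.

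First I would fix $x,y\in A$ and $\lambda\in[0,1]$, and set $z:=\lambda x+(1-\lambda)y$, which lies in $A$ since $A$ is convex. Writing $S=(S_1,\ldots,S_k)$, convexity of each $S_i$ gives $S_i(z)\le \lambda S_i(x)+(1-\lambda)S_i(y)$ for every $i$, that is, $S(z)\le \lambda S(x)+(1-\lambda)S(y)$ coordinate-wise. Both $S(z)$ and $\lambda S(x)+(1-\lambda)S(y)$ belong to $B$: the former because $S(A)\subset B$, the latter because $S(x),S(y)\in B$ and $B$ is convex. Hence the monotonicity hypothesis on $T$ applies and yields
\[
(T\circ S)(z)=T\bigl(S(z)\bigr)\le T\bigl(\lambda S(x)+(1-\lambda)S(y)\bigr).
\]

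Next I would invoke convexity of $T$ on $B$ to bound the right-hand side by $\lambda\, T(S(x))+(1-\lambda)\, T(S(y))$. Combining the two estimates gives
\[
(T\circ S)(\lambda x+(1-\lambda)y)\le \lambda\,(T\circ S)(x)+(1-\lambda)\,(T\circ S)(y),
\]
which is precisely convexity of $T\circ S$ on $A$.

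There is no genuine obstacle in this argument; the one point that deserves a word of care is checking that \emph{both} arguments of $T$ in the monotonicity step actually lie in $B$, which is exactly where convexity of $B$ together with the inclusion $S(A)\subset B$ is used. (If one prefers, the same conclusion can be drawn via subgradients as recalled above, but the elementary three-line chain is cleaner.)
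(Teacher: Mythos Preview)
Your proof is correct and complete; it is the standard three-step argument (convexity of the $S_i$, then monotonicity of $T$, then convexity of $T$), and your care in verifying that both $S(z)$ and $\lambda S(x)+(1-\lambda)S(y)$ lie in $B$ is exactly the right thing to check. The paper itself gives no proof, simply declaring the fact ``self-explanatory'' and pointing to Boyd--Vandenberghe, so your argument fills in precisely what the authors leave to the reader.
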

	
	The following two lemmas are critical for our development.
	\begin{lem}
		\label{lem:Ypm}
		Let $p\geq2$ and define, for $z=(z_1,z_2)\in\R^2$,
		\begin{equation*}
			Y^{(+)}(z)  := z_1 \left((z_2)_+\right)^{p-1} + |z|^p, \qquad
			Y^{(-)}(z)  := z_1 \left((z_2)_-\right)^{p-1} + |z|^p. 
		\end{equation*}
		 The functions are convex on $[0,\infty)\times\R$.
	\end{lem}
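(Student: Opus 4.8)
The plan is to reduce both functions to $Y^{(+)}$ and then to glue two convex pieces across the line $\{z_2=0\}$. First, observe that $Y^{(-)}(z_1,z_2)=Y^{(+)}(z_1,-z_2)$ and that $(z_1,z_2)\mapsto(z_1,-z_2)$ is a linear bijection of $D:=[0,\infty)\times\R$ onto itself, so $Y^{(-)}$ is convex on $D$ if and only if $Y^{(+)}$ is; hence I would only need to treat $Y^{(+)}$. Writing $D=D_+\cup D_-$ with $D_+:=[0,\infty)^2$ and $D_-:=[0,\infty)\times(-\infty,0]$, on $D_+$ we have $(z_2)_+=z_2$, so $Y^{(+)}=Y$, which is convex by Lemma~\ref{lem:Yconv}, while on $D_-$ we have $(z_2)_+=0$, so $Y^{(+)}=|z|^p$, which is convex. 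The remaining issue is to glue these two pieces along $\{z_2=0\}=D_+\cap D_-$.

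Assume first $p>2$. Then $Y^{(+)}\in C^1(D)$, since $\partial_{z_1}Y^{(+)}=((z_2)_+)^{p-1}+pz_1|z|^{p-2}$ and $\partial_{z_2}Y^{(+)}=(p-1)z_1((z_2)_+)^{p-2}+pz_2|z|^{p-2}$ are continuous on $D$ (here $p-2>0$ gives continuity of $((z_2)_+)^{p-2}$ at $z_2=0$, and $p-2>-1$ gives continuity of $z\mapsto|z|^{p-2}z$ at the origin); in particular $\nabla Y^{(+)}(w_1,0)=(pw_1^{p-1},0)$, matching both $\nabla Y$ and $\nabla|z|^p$ there. Since a $C^1$ function is convex on a convex set exactly when it lies above all its tangent planes there, it suffices to show $Y^{(+)}(z)\ge Y^{(+)}(w)+\nabla Y^{(+)}(w)\cdot(z-w)$ for all $w,z\in D$. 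When $w,z$ lie in a common $D_\pm$ this is convexity on that piece; otherwise the segment $[w,z]$ meets $\{z_2=0\}$ at a point $c=w+\theta(z-w)$, $\theta\in(0,1)$, and applying the tangent-plane inequality first to $(w,c)$ and then to $(c,z)$ --- each within the piece containing the corresponding subsegment --- yields
\[
Y^{(+)}(z)\ \ge\ Y^{(+)}(w)+\nabla Y^{(+)}(w)\cdot(c-w)+\nabla Y^{(+)}(c)\cdot(z-c).
\]
Because $z-c$ and $c-w$ are positive multiples of each other, monotonicity of $\nabla Y^{(+)}$ on the piece containing both $w$ and $c$ gives $\big(\nabla Y^{(+)}(c)-\nabla Y^{(+)}(w)\big)\cdot(z-c)\ge0$, whence the right-hand side above is $\ge Y^{(+)}(w)+\nabla Y^{(+)}(w)\cdot(z-w)$, as needed.

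For $p=2$ the function $Y^{(+)}$ is not $C^1$ across $\{z_2=0\}$, so I would instead note that $Y^{(+)}$ is the pointwise limit on $D$ of its counterparts with exponent $p'\downarrow 2$; as a pointwise limit of convex functions is convex, the case $p=2$ follows from the case $p>2$. (Alternatively, one checks directly that $(2w_1,0)$ is a subgradient of $Y^{(+)}$ on $D$ at each $(w_1,0)$ with $w_1\ge0$, and combines this with the positive-definiteness of the Hessian on the interiors of $D_\pm$ via the subgradient criterion recalled above.) I expect the gluing step to be the main obstacle: convexity of the two pieces alone does not preclude a downward kink along $\{z_2=0\}$, and what saves the day is that for $p>2$ there is no kink at all --- the one-sided gradients match --- while for $p=2$ the (genuine) kink bends the right way.
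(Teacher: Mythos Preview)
Your argument is correct, but it takes a genuinely different route from the paper's. The paper avoids the gluing step altogether by observing that $Y^{(+)}$ can be written as a maximum of two convex functions: with $T(z):=(z_1,(z_2)_+)$ mapping $[0,\infty)\times\R$ into $[0,\infty)^2$, one has $Y^{(+)}(z)=\max\{(Y\circ T)(z),\,|z|^p\}$. The composition $Y\circ T$ is convex because each coordinate of $T$ is convex and $Y$ (from Lemma~\ref{lem:Yconv}) is convex and coordinate-wise nondecreasing on $[0,\infty)^2$; the maximum of convex functions is then convex. This works uniformly for all $p\ge 2$ with no case distinction and no differentiability analysis.

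Your approach, by contrast, argues directly that the two convex pieces $Y$ on $D_+$ and $|z|^p$ on $D_-$ fit together without a downward kink: for $p>2$ you verify $C^1$-matching along $\{z_2=0\}$ and use gradient monotonicity to propagate the tangent-plane inequality across the interface; for $p=2$ you take a pointwise limit $p'\downarrow 2$. This is sound and self-contained, but it trades the paper's single structural observation (max of convex functions plus the composition rule) for explicit computations and a separate treatment of the endpoint $p=2$. The paper's proof is shorter and more robust; yours has the merit of making the geometric reason for convexity---that the kink, if any, bends upward---completely explicit.
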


	\begin{proof}
		Define $T\!:[0,\infty)\times\R\to[0,\infty)^2$ as
		\begin{align*}
			T(z) := (z_1, (z_2)_+),
			\quad z=(z_1,z_2)\in [0,\infty)\times\R,
		\end{align*}
		and let $Y\!:[0,\infty)^2\to\R$  be as in Lemma~\ref{lem:Yconv}. Since each coordinate of $T$ is convex and the function $Y$ is convex and coordinate-wise nondecreasing,  the composition
		\begin{align*}
			(Y\circ T)(z) = z_1 ((z_2)_+)^{p-1} + \left( (z_1)^2 + ((z_2)_+)^2 \right)^{p/2}
		\end{align*}
		is convex on $[0,\infty)\times\R$ (from Lemma~\ref{fact:convcomp}). Therefore,
		\begin{align*}
			Y^{(+)}(z) = \max\{(Y\circ T)(z),|z|^p\}
		\end{align*}
		is convex on $[0,\infty)\times\R$ as the maximum of convex functions \cite[Section 3.2.3]{MR2061575}.
		
		To prove the convexity of $Y^{(-)}$ we just notice that $Y^{(-)}(z_1,z_2)=Y^{(+)}(z_1,-z_2)$.
	\end{proof}
	
	\begin{lem}
		\label{lem:Jpppmmpositive}
		If $p>2$ then for all $z,w\in\R^2$, 
		\begin{align*}
			\mathcal{J}_p^{(++)}(w,z)+\mathcal{F}_p(w,z) \ge 0, \quad
			\mathcal{J}_p^{(-+)}(w,z)+\mathcal{F}_p(w,z) \ge 0,
		\end{align*}
		where
		$\mathcal{J}_p^{(++)}(w,z)$,  $\mathcal{J}_p^{(-+)}(w,z)$ are given by \eqref{eq:jotpp} and \eqref{eq:jotmp}.
	\end{lem}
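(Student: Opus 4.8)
The plan is to exhibit $\mathcal{J}_p^{(++)}(w,z)+\mathcal{F}_p(w,z)$ as the excess of a convex function over one of its supporting affine maps. Put
\[
Y^{(++)}(z):=(z_1)_+\big((z_2)_+\big)^{p-1}+|z|^p,\qquad z=(z_1,z_2)\in\R^2,
\]
and, for $w=(w_1,w_2)\in\R^2$,
\[
d_w:=\Big(\indyk(w_1)\big((w_2)_+\big)^{p-1},\ (p-1)(w_1)_+\big((w_2)_+\big)^{p-2}\Big)+p\,w^{\langle p-1\rangle}.
\]
Matching terms with the definitions \eqref{eq:jotpp} and \eqref{eq:Fp} (the latter with $n=2$) gives the bookkeeping identity
\[
\mathcal{J}_p^{(++)}(w,z)+\mathcal{F}_p(w,z)=Y^{(++)}(z)-Y^{(++)}(w)-d_w\cdot(z-w).
\]
So it suffices to prove that $Y^{(++)}$ is convex on $\R^2$ and that $d_w\in\partial Y^{(++)}(w)$ for every $w$; then \eqref{eq:subgrad} yields $\mathcal{J}_p^{(++)}+\mathcal{F}_p\ge0$. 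The companion bound follows from this one by the reflection symmetry \eqref{eq:Jpsym2} and the identity $\mathcal{F}_p(-\bar w,-\bar z)=\mathcal{F}_p(w,z)$, which is \eqref{eq:Fpsym} for the orthogonal map $(z_1,z_2)\mapsto(-z_1,z_2)$: together they give $\mathcal{J}_p^{(-+)}(w,z)+\mathcal{F}_p(w,z)=\big(\mathcal{J}_p^{(++)}+\mathcal{F}_p\big)(-\bar w,-\bar z)\ge0$.

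To prove that $Y^{(++)}$ is convex I would glue across the line $\ell=\{z_1=0\}$. On $\{z_1\ge0\}$ one has $Y^{(++)}=Y^{(+)}$, which is convex by Lemma~\ref{lem:Ypm}; on $\{z_1\le0\}$ one has $Y^{(++)}(z)=|z|^p$; and the two representations agree on $\ell$. For $w=(0,w_2)$ with $w_2>0$, a short computation of one-sided gradients at $w$ (where $p>2$ is used, to ensure $t\mapsto(t_+)^{p-1}$ is $C^1$) shows that the subdifferential at $w$ of the first piece, viewed as a convex function on the half-plane $\{z_1\ge0\}$, equals $\{(\mu,\,pw_2|w|^{p-2}):\mu\le((w_2)_+)^{p-1}\}$, while the subdifferential at $w$ of the second piece on $\{z_1\le0\}$ equals $\{(\mu,\,pw_2|w|^{p-2}):\mu\ge0\}$; their intersection is therefore nonempty and equals $\{(\lambda((w_2)_+)^{p-1},0)+p\,w^{\langle p-1\rangle}:\lambda\in[0,1]\}$ (for $w_2\le0$ the function $Y^{(++)}$ is differentiable near $w$ and this set is the single point $p\,w^{\langle p-1\rangle}$). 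For any $d$ in this set the affine map $z\mapsto Y^{(++)}(w)+d\cdot(z-w)$ lies below $Y^{(++)}$ on each half-plane, hence on all of $\R^2$, so $d\in\partial Y^{(++)}(w)$; inserting such supporting affine functions at the points where a segment meets $\ell$ yields the midpoint-convexity inequality for segments crossing $\ell$ (segments not meeting $\ell$ are handled by the convexity of the two pieces), and continuity then upgrades this to convexity of $Y^{(++)}$ on $\R^2$. At interior points $w$ with $w_1\ne0$, $Y^{(++)}$ is differentiable with $\nabla Y^{(++)}(w)=d_w$ (note $\indyk(w_1)\in\{0,1\}$ and $(w_1)_+\in\{0,w_1\}$), while on $\ell$ the choice $\lambda=\indyk(w_1)=\tfrac12$ lies in $[0,1]$; hence $d_w\in\partial Y^{(++)}(w)$ for every $w$. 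This is exactly why the convention $\indyk(0)=\tfrac12$ is forced in \eqref{eq:Jpppmm}.

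I expect the gluing step to be the crux. The function $Y^{(++)}$ is genuinely non-differentiable along the ray $\{z_1=0,\ z_2>0\}$ — there the right and left $\partial_{z_1}$-derivatives differ by $(z_2)^{p-1}$ — so one cannot simply differentiate, and must instead verify (i) that the half-plane subdifferentials of the two pieces actually overlap at each such $w$, (ii) that the overlap contains the particular vector read off from \eqref{eq:jotpp}, and (iii), crucially, that the corresponding supporting hyperplane also stays below $Y^{(++)}$ on the opposite half-plane, which is what makes \eqref{eq:Jpppmm} a legitimate difference of genuine subgradient remainders. Once the gluing is in place, what remains is the purely algebraic identification above, together with \eqref{eq:subgrad}, \eqref{eq:Jpsym2}, and \eqref{eq:Fpsym}.
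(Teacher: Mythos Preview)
Your argument is correct and follows the paper's overall strategy: identify $\mathcal{J}_p^{(++)}+\mathcal{F}_p$ with $Y^{(++)}(z)-Y^{(++)}(w)-d_w\cdot(z-w)$, prove $Y^{(++)}$ convex, verify $d_w\in\partial Y^{(++)}(w)$, and obtain the second inequality from \eqref{eq:Jpsym2} and \eqref{eq:Fpsym}. The one place you diverge is the convexity step: instead of gluing across $\{z_1=0\}$ via overlapping half-plane subdifferentials, the paper observes directly that $Y^{(++)}=\max\{Y\circ T,\,|z|^p\}$ with $T(z)=((z_1)_+,(z_2)_+)$ and $Y$ as in Lemma~\ref{lem:Yconv}, so convexity follows at once from Lemma~\ref{fact:convcomp} and the fact that a maximum of convex functions is convex. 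This bypasses the gluing bookkeeping entirely; the verification that $d_w$ is a subgradient on the non-differentiability ray $\{w_1=0,\ w_2>0\}$ is then carried out separately by comparing directional derivatives with $d_w\cdot v$, essentially as you do it. Your route works, but the $\max$ representation is shorter and makes the convexity independent of the subgradient analysis.
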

	
	\begin{proof}
		Because of \eqref{eq:Fpsym} and \eqref{eq:Jpsym2}, we only need to show that $\mathcal{J}_p^{(++)}(w,z)+\mathcal{F}_p(w,z) \ge 0$. We rewrite this inequality as
		\begin{align}
			\label{eq:Yppsubgrad}
			Y^{(++)}(z)
			\ge
			Y^{(++)}(w) + d(w)\cdot(z-w),
		\end{align}
		where $Y^{(++)}(z):=(z_1)_+\left((z_2)_+\right)^{p-1} + |z|^p$ and
		\begin{align}\label{eq:111}
			d(w)
			:=
			\left(\indyk(w_1)\left((w_2)_+\right)^{p-1},
			(p-1)(w_1)_+\left((w_2)_+\right)^{p-2}\right)
			+
			pw^{\langle p - 1 \rangle}.
		\end{align}
		Therefore the proof of \eqref{eq:Yppsubgrad} amounts to checking that  $d(w)$ is a subgradient of the function $Y^{(++)}$ at the point $w\in\R^2$.
		
		To show \eqref{eq:Yppsubgrad},  we first establish the convexity of $Y^{(++)}$. Define $T\!:\R^2\to[0,\infty)^2$ as
		\begin{align*}
			T(z) := ((z_1)_+, (z_2)_+),
			\quad z=(z_1,z_2)\in \R^2.
		\end{align*}
		Let $Y\!:[0,\infty)^2\to\R$ as in Lemma~\ref{lem:Yconv}. Since each coordinate of $T$ is convex and the function $Y$ is convex and coordinate-wise nondecreasing, the convexity on $\R^2$ of the composition
		\begin{align*}
			(Y\circ T)(z) = (z_1)_+ ((z_2)_+)^{p-1} + \left( ((z_1)_+)^2 + ((z_2)_+)^2 \right)^{p/2}
		\end{align*}
		follows from Lemma~\ref{fact:convcomp}. Since
		\begin{align*}
			Y^{(++)}(z) = \max\{(Y\circ T)(z),|z|^p\},
		\end{align*}
		it  is convex on $\R^2$ as a maximum of two convex functions.
		
		If $w=0$ then $Y^{(++)}(w)=0$ and $d(w)=0$, hence \eqref{eq:Yppsubgrad} is true for every $z$.
		
		If $w\neq0$,  to show that $d(w)$ is a subgradient of $Y^{(++)}$ at $w$,  we need to prove that
		\begin{align*}
			\frac{\partial Y^{(++)}}{\partial v} (w) \ge d(w)\cdot v,
			\quad
			w\in\R^2\setminus\{{0}\},\quad \mbox{for every } v=(v_1,v_2)\in\R^2.
		\end{align*}
		Denote $B:=\{(w_1,w_2)\in\R^2\!: w_1=0, w_2>0\}$ as vertical positive semi-axis.
		The function $Y^{(++)}$ is differentiable everywhere but on $B$. Thus when $w\notin B$,   the gradient of $Y^{(++)}$ exists, is given by \eqref{eq:111}
		and
		\[
		\frac{\partial Y^{(++)}}{\partial v} (w) = \nabla Y^{(++)}(w)\cdot v = d(w)\cdot v.
		\]
		In the remaining case  $w\in B$, we have two possibilities.
		If $v_1\ge0$, then
		\begin{eqnarray*}
			\frac{\partial Y^{(++)}}{\partial v} (w)
			&=&
			\left((w_2)_+\right)^{p-1}v_1
			+
			pw^{\langle p - 1 \rangle}
			\cdot v
			\\
			&\ge&
			\frac{1}{2}\left((w_2)_+\right)^{p-1}v_1
			+
			pw^{\langle p - 1 \rangle}
			\cdot v
			=
			d(w)\cdot v.
		\end{eqnarray*}
		Otherwise, when $v_1<0$, then
		\begin{align*}
			\frac{\partial Y^{(++)}}{\partial v} (w)
			=
			pw^{\langle p - 1 \rangle}
			\cdot v
			\ge
			\frac{1}{2}\left((w_2)_+\right)^{p-1}v_1
			+
			pw^{\langle p - 1 \rangle}
			\cdot v
			=
			d(w)\cdot v.
		\end{align*}
		The proof is complete.
	\end{proof}

	\section{Alternative proof of polarization for $p\geq 3$}\label{s.ap}
	The main difficulty in the proof of Theorem~\ref{thm:polarized} above is to justify the limiting procedure in the absence of nonnegativity in the integrands.
	For $p\geq 3$, we can proceed differently: the absolute value of the function $\mathcal J_p$ is dominated by the function $\mathcal G_p$, which  helps with the integrability issues in the proof of the polarized Hardy\nobreakdash--Stein formula. 
	\begin{lem}
		\label{lem:Jpup}
		For every $p\ge3$, there is a constant $c_p>0$ such that
		\begin{align}
			\label{eq:Jpup}
			|\mathcal J_p(w,z)|\leq c_p \mathcal G_p(w,z) \asymp \mathcal H_p(w,z), \quad w,z\in \R^2.
		\end{align}
	\end{lem}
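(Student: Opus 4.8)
The plan is to reduce \eqref{eq:Jpup} to the one-dimensional Taylor estimate \eqref{eq:restfrench} from Lemma~\ref{lem:inequalities-1} via an algebraic identity for $\mathcal J_p$. First I would record that
\begin{align*}
	\mathcal J_p(w,z) = z_1\,F_{\langle p-1\rangle}(w_2,z_2) + (p-1)(z_1-w_1)\,|w_2|^{p-2}(z_2-w_2), \qquad w,z\in\R^2,
\end{align*}
where $F_{\langle p-1\rangle}(w_2,z_2)=z_2^{\langle p-1\rangle}-w_2^{\langle p-1\rangle}-(p-1)|w_2|^{p-2}(z_2-w_2)$ is the scalar Taylor remainder of $x\mapsto x^{\langle p-1\rangle}$. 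This is checked by expanding the right-hand side and comparing with \eqref{eq:Jp}: both sides equal $z_1z_2^{\langle p-1\rangle}-z_1w_2^{\langle p-1\rangle}-(p-1)w_1|w_2|^{p-2}(z_2-w_2)$. Set $M:=|w|\vee|z|$, so that $|w_1|,|w_2|,|z_1|,|z_2|\le M$ and $|z_1-w_1|,|z_2-w_2|\le|z-w|$.

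Next I would estimate the two summands separately. For the second term, $|(p-1)(z_1-w_1)|w_2|^{p-2}(z_2-w_2)|\le (p-1)M^{p-2}|z_1-w_1||z_2-w_2|\le\tfrac{p-1}{2}M^{p-2}|z-w|^2$. For the first term, apply \eqref{eq:restfrench} with $\kappa=p-1$ (legitimate since $p\ge 3$ gives $\kappa>1$) and $\lambda=2$, obtaining $|F_{\langle p-1\rangle}(w_2,z_2)|\le C'_{p-1}|z_2-w_2|^2(|w_2|\vee|z_2|)^{p-3}$; since $p-3\ge 0$ we may bound $(|w_2|\vee|z_2|)^{p-3}\le M^{p-3}$ and $|z_1|\le M$, hence $|z_1 F_{\langle p-1\rangle}(w_2,z_2)|\le C'_{p-1}M^{p-2}|z-w|^2$. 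Adding the two bounds gives $|\mathcal J_p(w,z)|\le c_p\,M^{p-2}|z-w|^2=c_p\,\mathcal G_p(w,z)$ with $c_p:=C'_{p-1}+\tfrac{p-1}{2}$, recalling \eqref{eq:gp-vect} with $n=2$. The comparison $\mathcal G_p\asymp\mathcal H_p$ is then exactly \eqref{eq:Hp2Gp2} of Lemma~\ref{lem:Fp2Gp2} with $n=2$, which completes \eqref{eq:Jpup}.

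The ``obstacle'' here is not technical difficulty but the role of the hypothesis $p\ge 3$: it is used precisely to guarantee that the exponent $p-3$ appearing in \eqref{eq:restfrench} is nonnegative, so that $(|w_2|\vee|z_2|)^{p-3}$ is dominated by $M^{p-3}$. When $2<p<3$ this step fails — the factor blows up exactly when the first coordinates dominate the second ones — and $\mathcal J_p$ is genuinely not controlled by $\mathcal G_p$, which is the whole reason the general case requires the more delicate decomposition argument of Section~\ref{sec:polarized}. As an alternative route one could write $\mathcal J_p$ as the integral second-order Taylor remainder of $z\mapsto z_1 z_2^{\langle p-1\rangle}$ and bound the operator norm of its Hessian \eqref{rem:z1z2pm1Hessian} along the segment $[w,z]$ by a constant multiple of $|z_2|^{p-2}\vee |z_1||z_2|^{p-3}\le M^{p-2}$; this again needs $p\ge 3$ for the $(2,2)$-entry to be locally bounded, and the identity above is cleaner because it isolates the single place where \eqref{eq:restfrench} is invoked.
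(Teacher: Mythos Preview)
Your proof is correct and follows essentially the same strategy as the paper's: decompose $\mathcal J_p$ into a term containing the scalar remainder $F_{\langle p-1\rangle}(w_2,z_2)$ (controlled by \eqref{eq:restfrench}, where $p\ge 3$ enters to make the exponent $p-3$ nonnegative) plus a cross term of order $|z-w|^2 M^{p-2}$. The only difference is cosmetic: the paper writes $\mathcal J_p=(z_1-w_1)\bigl(z_2^{\langle p-1\rangle}-w_2^{\langle p-1\rangle}\bigr)+w_1\,F_{\langle p-1\rangle}(w_2,z_2)$ and handles the first summand via \eqref{eq:diffrench}, whereas your identity $\mathcal J_p=z_1\,F_{\langle p-1\rangle}(w_2,z_2)+(p-1)(z_1-w_1)|w_2|^{p-2}(z_2-w_2)$ makes the cross term elementary and needs only \eqref{eq:restfrench}. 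Both routes isolate the same obstruction for $2<p<3$, and your closing remark on why the bound fails there matches the paper's subsequent counterexample.
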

	\begin{proof}
		The formula \eqref{eq:Jp}, defining $\mathcal J_p$,  can be rewritten as
		\begin{eqnarray*}
			\mathcal{J}_p(w,z)
			&=&
			z_1z_2^{\langle p-1\rangle} - w_1w_2^{\langle p-1\rangle}-w_2^{\langle p-1\rangle}(z_1-w_1)-(p-1)w_1|w_2|^{p-2}(z_2-w_2)\\
			&=&
			(z_1-w_1)(z_2^{\langle p-1\rangle}-w_2^{\langle p-1\rangle})
			+
			w_1[z_2^{\langle p-1\rangle}-w_2^{\langle p-1\rangle} - (p-1)|w_2|^{p-2}(z_2-w_2)].
		\end{eqnarray*}
		Using \eqref{eq:difpowers} we can estimate the first summand above in the following manner
		\begin{eqnarray*}
			|z_1-w_1||z_2^{\langle p-1\rangle}-w_2^{\langle p-1\rangle}|
			&\leq&
			C_p' |w_1-z_1|\cdot|w_2-z_2|\cdot \left(|w_2|\vee|z_2|\right)^{p-2}
			\\
			&\leq&
			\frac{C_p'}{2}\left(|w_1-z_1|^2+|w_2-z_2|^2\right)
			\left(|w_2|\vee|z_2|\right)^{p-2}\\
			&\leq&
			\frac{C_p'}{2}|w-z|^2\left(|w|\vee|z|\right)^{p-2}
			=
			\frac{C_p'}{2} \mathcal{G}_p(w,z).
		\end{eqnarray*}
		For the second summand we use \eqref{eq:restfrench},
		\begin{align*}
			|w_1[z_2^{\langle p-1\rangle}-w_2^{\langle p-1\rangle} - & (p-1)|w_2|^{p-2}(z_2-w_2)]|
			\leq C_{p-1}''|w_1||z_2-w_2|^2(|w_2|\vee|z_2|)^{p-3}
			\\
			&\leq
			C_{p-1}''|w-z|^2 (|w_1|\vee |z_1|)(|w_2|\vee|z_2|)^{p-3}\\
			&\leq C_{p-1}''|w-z|^2(|w|\vee |z|)^{p-2}
			=
			C_{p-1}''\mathcal{G}_p(w,z).
		\end{align*}
		Thus,
		\begin{align*}
			|\mathcal{J}_p(w,z)| \le c_p \mathcal{G}_p(w,z)
		\end{align*}
		and $\mathcal{G}_p(w,z)\asymp\mathcal{H}_p(w,z)$ by \eqref{eq:Hp2Gp2}.
	\end{proof}
	
	\begin{rem}
		The statement \eqref{eq:Jpup} stays true also for $p=2$. Indeed, by \eqref{eq:Jp2},
		\begin{align*}
			|\mathcal{J}_2(w,z)|
			=
			|(z_1-w_1)(z_2-w_2)|
			\leq
			|z-w|^2
			=
			\mathcal{G}_2(w,z).
		\end{align*}
		On the other hand, it fails in general  for $p\in(1,3)\setminus\{2\}$. Indeed, for $k=1,2,\ldots$, let $w^{(k)}:=\left(1,\frac{1}{k}\right)$, $z^{(k)}:=\left(1,\frac{2}{k}\right)$. Then, by \eqref{eq:Jp},
		\begin{eqnarray*}
			|\mathcal J_p(w^{(k)},z^{(k)})|
			&=&
			\left|
			\frac{2^{p-1}}{k^{p-1}} - \frac{1}{k^{p-1}} - \frac{p-1}{k^{p-1}}
			\right|
			=
			\left| 2^{p-1} - p \right| \frac{1}{k^{p-1}}, \\
			\mathcal G_p(w^{(k)},z^{(k)})
			&=&
			\frac{1}{k^2} \left(1+\frac{4}{k^2}\right)^\frac{p-2}{2}.
		\end{eqnarray*}
		Our claim is verified by notifying that
		\begin{align*}
			\frac{|\mathcal J_p(w^{(k)},z^{(k)})|}{\mathcal{G}_p(w^{(k)},z^{(k)})}
			=
			\frac{k^{3-p} \left| 2^{p-1} - p \right|}{\left(1+\frac{4}{k^2}\right)^\frac{p-2}{2}}
			\to
			\infty
			\quad \mbox{as } k\to\infty.
		\end{align*}
	\end{rem}
	
	Estimate \eqref{eq:Jpup} permits to substantially simplify the proof of the polarized Hardy\nobreakdash--Stein identity \eqref{eq:HS2v2}. Indeed, for $f,g\in\Lp$, $u(t)=P_tf$, $v(t)=P_tg$, $\Phi=(u,v)$, and $p\geq 3$, from Theorem~\ref{thm:HS15} we have that
	\begin{align}\label{eq:Hp-int}
		\int_0^\infty\int_{\Rd}\int_{\Rd} \mathcal H_p(P_t\Phi(x),P_t\Phi(y))\nu(x,y)\,\dx\dy\dt <\infty,
	\end{align}
	so in view of Lemma~\ref{lem:Jpup}, an analogous integral of $\left|\mathcal J_p(P_t\Phi(x),P_t\Phi(y))\right|$ is convergent as well.
	We next review 
the proof of Theorem~\ref{thm:polarized}:
	for $f,g\in\mathcal D_p(L)$, we differentiate $u(t)v(t)^{\langle p - 1 \rangle}$ in $L^1(\Rd)$ as in \eqref{eq:diff-l1} and we have:
	\[  \frac{\rm d}{\dt} \intRd u(t) v(t)^{\langle p - 1 \rangle}\,\dx
	=
	-\lim_{h\to0^+}\intRd\intRd
	\mathcal{J}_p(P_t\Phi(x),P_t\Phi(y))
	\frac{p_h(x,y)}{h}\,\dx\dy.\]
	Since $|\mathcal J_p|\leq \mathcal H_p$ and the integral in \eqref{eq:Hp-int} is convergent, we can pass to the limit when  $h\to 0^+$ (we use the Dominated Convergence Theorem, \eqref{eq:mp}, and \eqref{eq:zp}) to obtain
	\begin{equation}\label{e.tjJ} \frac{\rm d}{\dt} \intRd u(t) v(t)^{\langle p - 1 \rangle} \,\dx
	=
	-\intRd\intRd
	\mathcal{J}_p(P_t\Phi(x),P_t\Phi(y))
	\nu(x,y) \,\dx\dy.
	\end{equation}
	The rest of the proof remains unchanged: we integrate from
	0 to $T$ with $T>0$ fixed, then we pass to the limit $T\to\infty$.
	Then, we relax the assumption that $f,g\in \mathcal D_p(L)$ by using the analyticity of the semigroup.
	
	\section{Proof of \eqref{eq:HSgauss2}}
	\label{sec:brown}

	Let $\{ B_t, t\geq0\}$ be the Brownian motion on the Euclidean space $\Rd$ running at twice the usual speed, and let $(P_t)_{t\geq 0}$ be
its semigroup:
	\begin{align*}
		P_t f(x):=\Ex f(B_t)= \intRd f(y) p_t(x,y) \,\dy = (p_t\ast f)(x),
		\quad t>0,x\in\Rd,
	\end{align*}
	where
	\begin{align*}
		p_t(x) = (4\pi t)^{-d/2} e^{-\frac{|x|^2}{4t}},
		\quad t>0,x\in\Rd
	\end{align*}
	and $p_t(x,y):=p_t(x-y)$,  as before. Let $1<p<\infty$. It is well known that $(P_t)_{t\geq 0}$ is a strongly continuous, analytic,  Markovian semigroup of symmetric operators in $\Lp$.  In particular, for every $t>0$ and $f\in\Lp$,  $P_tf$ belongs to the domain of the generator of this semigroup. Estimates \eqref{eq:Ptf} and \eqref{eq:stein} hold true as well, therefore the key ingredients needed to prove Hardy\nobreakdash--Stein identity remain satisfied for the Brownian motion. Thus, for every $u\in \Lp$, we define, as before,
	\begin{align*}
		\E_p[u] := \lim_{t\to 0} \E^{(t)}(u,u^{\langle p-1 \rangle})
	\end{align*}
	and
	{
		\begin{align*}
			\DEp
			&=
			\{u\in \Lp: \text{ finite } \lim_{t\to 0}\mathcal E^{(t)}(u,u^{\langle p-1 \rangle}) \text{ exists}\}.
		\end{align*}
	}

	Similarly as in the proof of 
	Theorem~\ref{thm:HS15}, 
	we obtain
	\begin{align}
		\label{eq:HSgauss}
		\intRd |f|^p \,\dx = p \int_0^\infty \E_p[P_t f] \,\dt
		,\quad  f\in\Lp.
	\end{align}
	The generator of the Gaussian semigroup $(P_t)_{t\geq 0}$ acting on $u\in\Lp$ is
	\begin{align*}
		L u := 
		\lim_{h\to0^+} \frac{1}{h}(P_h u - u),
		\quad \textit{if the limit exists in }\Lp.
	\end{align*}
	We can also write
	\begin{align*}
		L u = \sum_{j=1}^d \frac{\partial^2 u}{\partial x_j^2},\quad u\in\Lp,
	\end{align*}
	where the partial derivatives of $u$ are understood in the distributional sense. We kept the letter $L$ here, to be in accordance with the previous development.
	The domain of the generator is
	\begin{eqnarray*}
		\mathcal D_p(L)
		&:=&
		\{u\in\Lp: \lim_{h\to0^+} (P_h u - u)/h \text{ exists in }\Lp\}
		\\
		&\ =&
		\left\{u\in\Lp: \sum_{j=1}^d \frac{\partial^2 u}{\partial x_j^2}\in\Lp\right\}.
	\end{eqnarray*}
	In  Appendix~\ref{sec:app} we explain and justify the above statements.
	
	As earlier, for $u\in\mathcal D_p(L)\subset\DEp$,
	\begin{align}
		\label{eq:pfaggauss}
		\E_p[u] = -\langle Lu, u^{\langle p-1 \rangle} \rangle.
	\end{align}
	To express the Hardy\nobreakdash--Stein identity in a more explicit form, we need the following identity, which was proved by Metafune and Spina \cite{MR2465581}.
	\begin{lem}
		Let $1<p<\infty$. For $u\in W^{2,p}(\Rd)$,
		\begin{align}
			\label{eq:metafune}
			\intRd u^{\langle p-1 \rangle} L u \,\dx
			=
			-(p-1)\intRd |u|^{p-2}|\nabla u|^2 \,\dx,
		\end{align}
		where $W^{k,p}(\Rd)$ is the Sobolev space of order $k$.
	\end{lem}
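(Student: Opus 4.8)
The plan is to prove \eqref{eq:metafune} first for $u\in C_0^\infty(\Rd)$ and then extend to $u\in W^{2,p}(\Rd)$ by density, treating the ranges $p\ge2$ and $1<p<2$ differently, since $|u|^{p-2}$ is locally bounded in the former case but singular along $\{u=0\}$ in the latter. For $u\in C_0^\infty(\Rd)$ with $p\ge2$ the function $u^{\langle p-1\rangle}=u|u|^{p-2}$ is $C^1$ and compactly supported, with $\nabla\big(u^{\langle p-1\rangle}\big)=(p-1)|u|^{p-2}\nabla u$ (recall $(x^{\langle p-1\rangle})'=(p-1)|x|^{p-2}$, which is continuous for $p\ge 2$), so classical integration by parts gives $\intRd u^{\langle p-1\rangle}Lu\,\dx=-(p-1)\intRd|u|^{p-2}|\nabla u|^2\,\dx$, which is \eqref{eq:metafune}. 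For $1<p<2$ one first regularizes: put $g_\varepsilon(t):=t\,(t^2+\varepsilon^2)^{(p-2)/2}\in C^\infty(\R)$, so that $g_\varepsilon(t)\to t^{\langle p-1\rangle}$, $|g_\varepsilon(t)|\le|t|^{p-1}$, and $g_\varepsilon'(t)=(t^2+\varepsilon^2)^{(p-4)/2}\big((p-1)t^2+\varepsilon^2\big)$ satisfies $0\le g_\varepsilon'(t)\le(t^2+\varepsilon^2)^{(p-2)/2}\le|t|^{p-2}$ with $g_\varepsilon'(t)\to(p-1)|t|^{p-2}$ for $t\neq0$; classical integration by parts applied to $g_\varepsilon(u)\in C_0^\infty$ yields $\intRd g_\varepsilon(u)Lu\,\dx=-\intRd g_\varepsilon'(u)|\nabla u|^2\,\dx$, and letting $\varepsilon\to0^+$ — by dominated convergence, the majorant $|u|^{p-2}|\nabla u|^2$ being locally integrable since $u$ is smooth and $p>1$, and equal to $0$ a.e.\ on $\{u=0\}$ because $\nabla u$ vanishes a.e.\ there — recovers \eqref{eq:metafune} for $u\in C_0^\infty$.

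For the density step, let $u\in W^{2,p}(\Rd)$ and choose $u_k\in C_0^\infty(\Rd)$ with $u_k\to u$ in $W^{2,p}$. The left-hand side of \eqref{eq:metafune} passes to the limit: by Lemma~\ref{lem:new-cont} with $\kappa=p-1$ we have $u_k^{\langle p-1\rangle}\to u^{\langle p-1\rangle}$ in $L^{p/(p-1)}(\Rd)$, and $Lu_k\to Lu$ in $L^p(\Rd)$, so $\intRd u_k^{\langle p-1\rangle}Lu_k\,\dx\to\intRd u^{\langle p-1\rangle}Lu\,\dx$ by H\"older's inequality. For the right-hand side, write $|u|^{p-2}|\nabla u|^2=\big(|u|^{(p-2)/2}|\nabla u|\big)^2$; if $p>2$, Lemma~\ref{lem:new-cont} with $\kappa=(p-2)/2\in(0,p]$ gives $|u_k|^{(p-2)/2}\to|u|^{(p-2)/2}$ in $L^{2p/(p-2)}(\Rd)$, and since $\nabla u_k\to\nabla u$ in $L^p$ the product $|u_k|^{(p-2)/2}|\nabla u_k|$ converges in $L^2(\Rd)$ (note $\tfrac{p-2}{2p}+\tfrac1p=\tfrac12$), so $\intRd|u_k|^{p-2}|\nabla u_k|^2\to\intRd|u|^{p-2}|\nabla u|^2<\infty$; for $p=2$ this is immediate. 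If $1<p<2$, one keeps $g_\varepsilon$: the regularized identity $\intRd g_\varepsilon(u)Lu\,\dx=-\intRd g_\varepsilon'(u)|\nabla u|^2\,\dx$ extends to $u\in W^{2,p}$ by a routine density argument ($g_\varepsilon$ being Lipschitz), then letting $\varepsilon\to0^+$ Fatou's lemma gives $(p-1)\intRd|u|^{p-2}|\nabla u|^2\,\dx\le-\intRd u^{\langle p-1\rangle}Lu\,\dx<\infty$, and with $|u|^{p-2}|\nabla u|^2$ now known to be integrable and to dominate $g_\varepsilon'(u)|\nabla u|^2$, dominated convergence upgrades this to the equality \eqref{eq:metafune}. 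Passing to the limit in the identity for $u_k$ (or letting $\varepsilon\to0^+$ when $1<p<2$) completes the proof.

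The main obstacle is concentrated in the range $1<p<2$: there $u^{\langle p-1\rangle}$ is not classically differentiable, the integrand $|u|^{p-2}|\nabla u|^2$ is not obviously integrable for $u\in W^{2,p}$, and $|u_k|^{p-2}|\nabla u_k|^2$ cannot be dominated uniformly along an approximating sequence. The device that resolves all of this is the smoothing $g_\varepsilon$ together with the two-step limiting argument — Fatou's lemma first, to establish integrability of the right-hand side, then dominated convergence with that integrand as majorant, to upgrade the resulting inequality to an equality — supplemented by the continuity properties of $u\mapsto u^{\langle\kappa\rangle}$ from Lemma~\ref{lem:new-cont} that are needed to pass to the limit on the left-hand side.
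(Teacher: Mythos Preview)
The paper does not give its own proof of this lemma; it simply cites Metafune and Spina \cite{MR2465581}. Your argument for $p\ge 2$ is correct and self-contained: the integration by parts for $u\in C_0^\infty(\Rd)$ is classical since $u^{\langle p-1\rangle}\in C_c^1$, and the density step goes through cleanly because $|u|^{(p-2)/2}\in L^{2p/(p-2)}$ and $|\nabla u|\in L^p$ place the product in $L^2$, so Lemmas~\ref{lem:new-cont} and~\ref{thm:continuity} handle both sides.

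For $1<p<2$ there is a genuine gap. The step you label ``routine density argument ($g_\varepsilon$ being Lipschitz)'' does not go through as stated. The Lipschitz property of $g_\varepsilon$ lets you pass to the limit on the left-hand side $\int g_\varepsilon(u_k)Lu_k$, but on the right-hand side you need
\[
\intRd g_\varepsilon'(u_k)\,|\nabla u_k|^2\,\dx \;\longrightarrow\; \intRd g_\varepsilon'(u)\,|\nabla u|^2\,\dx,
\]
and here the problem is that for $u\in W^{2,p}(\Rd)$ with $p<2$ one only has $|\nabla u|^2\in L^{p/2}(\Rd)$, not $L^1(\Rd)$. The bound $0\le g_\varepsilon'\le \varepsilon^{p-2}$ is therefore useless as a majorant, and the alternative bound $g_\varepsilon'(t)\le |t|^{p-2}$ presupposes exactly the integrability of $|u|^{p-2}|\nabla u|^2$ that you are trying to establish. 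Fatou's lemma (along an a.e.\ convergent subsequence) gives only the inequality
\[
\intRd g_\varepsilon'(u)\,|\nabla u|^2\,\dx \;\le\; -\intRd g_\varepsilon(u)\,Lu\,\dx,
\]
not the equality. Consequently, after letting $\varepsilon\to 0^+$ your Fatou-then-DCT scheme still delivers only $(p-1)\int |u|^{p-2}|\nabla u|^2\le -\int u^{\langle p-1\rangle}Lu$, and the reverse inequality remains unproven. Closing this gap is precisely the nontrivial content of the Metafune--Spina paper; it is not bypassed by the $g_\varepsilon$ device alone.
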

	It is not hard to see that for $t>0$ and $f\in\Lp$, we have $P_tf\in W^{2,p}(\Rd)$. Indeed, for every multi-index $\alpha=(\alpha_1,\ldots,\alpha_d)$,  we denote, as usual, $|\alpha|:=\alpha_1+\ldots+\alpha_d$ and
		$\partial^\alpha := \frac{\partial^{|\alpha|}}{\partial x_1^{\alpha_1}\ldots\partial x_d^{\alpha_d}}$.
	Then,
	\begin{align*}
		\left\| \partial^\alpha P_t f \right\|_{\Lp}
		=
		\left\| (\partial^\alpha p_t) \ast f \right\|_{\Lp}
		\leq
		\left\| \partial^\alpha p_t\right\|_{L^1(\Rd)} \cdot \left\| f \right\|_{\Lp}
		<\infty.
	\end{align*}
	By \eqref{eq:pfaggauss} and \eqref{eq:metafune}, for $f\in\Lp$ and $t>0$, 
	\begin{align}
		\label{eq:metafuneEp}
		\E_p[P_t f] = -\langle \Delta P_t f, (P_t f)^{\langle p-1 \rangle} \rangle
		=
		(p-1)\intRd |P_t f|^{p-2} |\nabla P_t f|^2 \,\dx.
	\end{align}
	Since $P_t f\in C^\infty(\Rd)$, the above derivatives are taken in the classical sense. Here $\Delta$ is the classical Laplacian. Using \eqref{eq:metafuneEp}, we can express Hardy\nobreakdash--Stein identity for the Gaussian semigroup \eqref{eq:HSgauss} in the desired form.
	This finishes the proof of \eqref{eq:HSgauss2}.

	\section{The  generator of the Gaussian semigroup in $L^p$}
	\label{sec:app}

	For completeness we prove the equivalence of two definitions of the Laplacian on $\Lp$ used in Appendix~\ref{sec:brown}.
		Let $1<p<\infty$. Let $(P_t)_{t \geq 0}$ be the Gaussian semigroup. It is well known that for $\varphi\in C_c^\infty (\Rd),$  the infinitely differentiable functions on $\Rd$ with compact support,
	\begin{align*}
		 \lim_{h\to0^+} \frac{1}{h}(P_h \varphi - \varphi)  = \sum_{j=1}^d \frac{\partial^2 \varphi}{\partial x_j^2} 
		=:\Delta\varphi  
		\quad \text{ (the limit taken in }\Lp ).
	\end{align*} We show that the equality is satisfied also for those functions from $\Lp$, for which the right-hand side limit exists, without further regularity assumptions.
	
	The {\it semigroup Laplacian} is defined as
	\begin{align}
		L f := \lim_{h\to0^+} \frac{1}{h}(P_h f - f)
		,\quad f\in\mathcal D_p(L)\subset\Lp,
	\end{align}
	where the limit above is taken in $\Lp$, for $f$ in the natural domain:
	\begin{align*}
		\D_p(L)
		:=
		\{u\in\Lp: \lim_{h\to0^+} (P_h u - u)/h \text{ exists in }\Lp\}.
	\end{align*}
	Since $L$ is the generator of a strongly continuous semigroup in $\Lp$,  the operator $\lambda I - L\!:\D_p(L)\to\Lp$ is a bijection for every $\lambda>0$.

	We then recall the notion of the {\em distributional Laplacian} $\tilde L$ of $f\in\Lp$. 
If there exists $g\in\Lp$ such that
	\begin{align*}
		\langle g, \varphi \rangle = \langle f, L\varphi \rangle
		= \left\langle f, \sum_{j=1}^d \frac{\partial^2 \varphi}{\partial x_j^2} \right\rangle = \langle f, \Delta\varphi \rangle
	\end{align*}
	for all test functions $\varphi\in C_c^\infty (\Rd)$, then we let $\tilde{L}f := g$. The class of all such functions $f$
is denoted $\D(\tilde{L})$. In other words,
	\begin{align*}
		\tilde{L}f = \sum_{j=1}^d \frac{\partial^2 f}{\partial x_j^2},
	\end{align*}
	where the partial derivatives are taken in distributional sense.
The operators $L$ and $\tilde L$ coincide, which we prove below.
	\begin{lem}
		\label{lem:Lext}
		The operator $\tilde{L}$ is an extension of $L$.
	\end{lem}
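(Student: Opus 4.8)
The plan is to unpack the word ``extension'': I must show that $\D_p(L)\subset\D(\tilde L)$ and that $\tilde Lf=Lf$ for every $f\in\D_p(L)$. Both will drop out of a single duality computation against test functions. So I would fix $f\in\D_p(L)$, so that $\tfrac1h(P_hf-f)\to Lf$ in $\Lp$ as $h\to0^+$, and fix an arbitrary $\varphi\in C_c^\infty(\Rd)$, which in particular lies in $L^q(\Rd)$ with $q=p/(p-1)$. Using that the operators $P_h$ are symmetric in the canonical pairing (equivalently, that $p_h$ is an even function, so Fubini gives $\langle P_h u,v\rangle=\langle u,P_hv\rangle$ for $u\in\Lp$, $v\in L^q(\Rd)$), for every $h>0$ I would write
\begin{equation*}
	\Big\langle \tfrac1h\big(P_hf-f\big),\,\varphi\Big\rangle
	=\Big\langle f,\,\tfrac1h\big(P_h\varphi-\varphi\big)\Big\rangle.
\end{equation*}

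Next I would let $h\to0^+$ on both sides. On the left, since $\tfrac1h(P_hf-f)\to Lf$ in $\Lp$ and $\varphi\in L^q(\Rd)$, continuity of the pairing (H\"older) gives $\langle\tfrac1h(P_hf-f),\varphi\rangle\to\langle Lf,\varphi\rangle$. On the right I need $\tfrac1h(P_h\varphi-\varphi)\to\Delta\varphi$ in $L^q(\Rd)$; this is exactly the fact recalled at the start of Appendix~\ref{sec:brown}, now with exponent $q$ in place of $p$, and it holds verbatim for every exponent in $(1,\infty)$ because $\varphi\in C_c^\infty(\Rd)$ and $s\mapsto P_s\varphi$ solves the heat equation, so $P_h\varphi-\varphi=\int_0^h P_s\Delta\varphi\,{\rm d}s$ and hence $\tfrac1h(P_h\varphi-\varphi)=\tfrac1h\int_0^h P_s\Delta\varphi\,{\rm d}s\to\Delta\varphi$ in $L^q(\Rd)$ by the strong continuity of the semigroup (here $\Delta\varphi\in C_c^\infty(\Rd)\subset L^q(\Rd)$). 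Since $f\in\Lp$, continuity of the pairing again gives $\langle f,\tfrac1h(P_h\varphi-\varphi)\rangle\to\langle f,\Delta\varphi\rangle$.

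Combining the two limits yields $\langle Lf,\varphi\rangle=\langle f,\Delta\varphi\rangle$ for every $\varphi\in C_c^\infty(\Rd)$, which by the definition of the distributional Laplacian means precisely that $f\in\D(\tilde L)$ and $\tilde Lf=Lf$; this is the assertion. I do not expect a genuine obstacle here. The only point deserving a word of care is the $L^q$-convergence $\tfrac1h(P_h\varphi-\varphi)\to\Delta\varphi$, i.e.\ that $C_c^\infty(\Rd)$ lies in the $L^q$-domain of the generator of $(P_t)_{t\ge0}$ and that this generator is the classical Laplacian there (equivalently, the validity of Appendix~\ref{sec:brown}'s opening identity for all exponents in $(1,\infty)$, which is standard and, as indicated, can be established in one line via the heat-equation integral formula). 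The reverse inclusion $\D(\tilde L)\subset\D_p(L)$ is not needed for the present statement, but I note it would follow from the bijectivity of $\lambda I-L$ for $\lambda>0$ together with the fact that $\lambda I-\tilde L$ is injective.
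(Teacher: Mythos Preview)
Your proposal is correct and follows essentially the same approach as the paper: both use the symmetry of $P_h$ to pass the difference quotient across the pairing, then take the limit on each side to obtain $\langle Lf,\varphi\rangle=\langle f,\Delta\varphi\rangle$ for all test functions, whence $f\in\D(\tilde L)$ and $\tilde Lf=Lf$. Your write-up is more detailed about the $L^q$-convergence $\tfrac1h(P_h\varphi-\varphi)\to\Delta\varphi$ (via the integral formula), whereas the paper simply invokes the identity $L\varphi=\Delta\varphi$ recalled at the start of the appendix.
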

	\begin{proof}
		Let $f\in\D_p(L)$. For every $\varphi\in C_c^\infty (\Rd)$, by symmetry of the operators $P_h$,
		\begin{align*}
			\langle \tilde{L}f, \varphi \rangle
			=
			\langle f, L\varphi \rangle
			=
			\lim_{h\to0^+} \frac{1}{h} \left(\langle f, P_h \varphi \rangle - \langle f, \varphi \rangle \right)
			=
			\lim_{h\to0^+} \frac{1}{h} \left(\langle P_h f, \varphi \rangle - \langle f, \varphi \rangle \right)
			=
			\langle Lf, \varphi \rangle.
		\end{align*}
		Thus, $f\in\D(\tilde{L})$ and $\tilde{L}f=Lf$.
	\end{proof}
	
	\begin{lem}
		\label{lem:Lsurj}
		For every $\lambda>0$, the operator $\lambda I - \tilde{L}$ defined on $\D(\tilde{L})$ is one-to-one.
	\end{lem}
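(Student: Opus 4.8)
The plan is to push the equation $\tilde{L}f=\lambda f$ onto the regularizing scale of the Gaussian semigroup, where Lemma~\ref{lem:Lext} together with the uniform boundedness of $(P_t)_{t\ge0}$ on $\Lp$ will force $f\equiv0$. So I would fix $\lambda>0$, take $f\in\D(\tilde{L})$ with $\tilde{L}f=\lambda f$, and put $u(t):=P_tf=p_t\ast f$ for $t\ge0$.

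The first step is to identify $LP_tf$ for $t>0$. Since the Gaussian semigroup is analytic on $\Lp$, the excerpt already gives $P_tf\in\D_p(L)$, so by Lemma~\ref{lem:Lext} we have $LP_tf=\tilde{L}P_tf$; and because $P_tf=p_t\ast f\in C^\infty(\Rd)$, the distributional Laplacian $\tilde{L}P_tf$ coincides with the classical one, $\Delta(p_t\ast f)$. I would then use the commutation $\Delta(p_t\ast f)=p_t\ast\tilde{L}f$: this follows by extending the defining identity $\langle\tilde{L}f,\varphi\rangle=\langle f,\Delta\varphi\rangle$ from $\varphi\in C_c^\infty(\Rd)$ to Schwartz test functions (legitimate since $f,\tilde{L}f\in\Lp$ and $p_t$ together with its derivatives lies in every $L^r$) and applying it with $\varphi=p_t(x-\cdot)$, so that $\big(p_t\ast\tilde{L}f\big)(x)=\langle\tilde{L}f,p_t(x-\cdot)\rangle=\langle f,(\Delta p_t)(x-\cdot)\rangle=\big((\Delta p_t)\ast f\big)(x)=\Delta(p_t\ast f)(x)$. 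Altogether $LP_tf=p_t\ast\tilde{L}f=\lambda\,p_t\ast f=\lambda P_tf$ for every $t>0$.

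Next I would set $v(t):=e^{-\lambda t}P_tf$ for $t>0$. Since $t\mapsto P_tf$ is differentiable in $\Lp$ on $(0,\infty)$ with $\frac{\rm d}{\dt}P_tf=LP_tf$ (again by analyticity), $v$ is differentiable there with $v'(t)=e^{-\lambda t}(LP_tf-\lambda P_tf)=0$, so $v$ is constant on $(0,\infty)$. On the other hand $\|v(t)\|_{\Lp}=e^{-\lambda t}\|P_tf\|_{\Lp}\to0$ as $t\to\infty$, because $\sup_{t>0}\|P_tf\|_{\Lp}<\infty$ (e.g.\ by the Stein inequality of Lemma~\ref{lem:stein}, using $|P_tf|\le f^*\in\Lp$). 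A constant $\Lp$-valued function whose norm tends to $0$ is identically $0$, hence $P_tf=0$ for all $t>0$; letting $t\to0^+$ and invoking the strong continuity of $(P_t)_{t\ge0}$ yields $f=0$, which is the claim.

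The step I expect to be the main obstacle is the commutation identity $\Delta(p_t\ast f)=p_t\ast\tilde{L}f$ — more precisely, verifying that the distributional Laplacian, which is defined only through $C_c^\infty$-testing, genuinely commutes with convolution against the Schwartz but non-compactly-supported kernel $p_t$ (one convolves $\tilde{L}f$ against $p_t$ by approximating $p_t(x-\cdot)$ with cutoffs whose gradients and Laplacians vanish in $L^q$), and that the resulting distributional object agrees with the abstract semigroup generator on the smooth function $P_tf$. Everything after that is a short argument with the linear ODE $v'=0$ and the boundedness of the semigroup on $\Lp$.
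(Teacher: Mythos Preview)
Your argument is correct and takes a genuinely different route from the paper. The paper convolves $f$ with an arbitrary $\varphi\in C_c^\infty(\Rd)$, observes that $f\ast\varphi\in C_0^\infty(\Rd)$ satisfies $\lambda(f\ast\varphi)-\Delta(f\ast\varphi)=0$ classically, and then kills $f\ast\varphi$ by the maximum principle: at a positive interior maximum the Laplacian is $\le 0$, forcing a contradiction. Your proof instead pushes $f$ through the semigroup, uses analyticity together with Lemma~\ref{lem:Lext} to identify $LP_tf=\lambda P_tf$, and then runs the ODE $v'(t)=0$ for $v(t)=e^{-\lambda t}P_tf$ combined with the contraction bound $\|P_tf\|_{\Lp}\le\|f\|_{\Lp}$ (Stein's inequality is more than you need here). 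The paper's approach is shorter and more elementary---it needs only the commutation of $\tilde{L}$ with convolution against compactly supported mollifiers (immediate from the definition) and a pointwise maximum principle, with no appeal to analyticity or to the extension of the duality pairing to Schwartz test functions. Your approach, on the other hand, is the standard dissipativity/uniqueness argument for semigroup generators and would transfer verbatim to settings where no classical maximum principle is available; the only real cost is the commutation step $\Delta(p_t\ast f)=p_t\ast\tilde{L}f$, which you have justified correctly.
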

	\begin{proof}
		Assume that $f\in\D(\tilde{L})$ and $\lambda f - \tilde{L}f=0$. We prove that $f=0.$ Take $\varphi\in C_c^\infty (\Rd)$,  then using properties of convolutions and distributional derivatives, we can write
		\begin{align}
			\label{eq:astphi}
			\lambda f\ast\varphi - \tilde{L}(f\ast\varphi)
			=
			\lambda f\ast\varphi - (\tilde{L}f)\ast\varphi
			=
			(\lambda f - \tilde{L}f) \ast \varphi
			=
			0 \ast \varphi
			=
			0.
		\end{align}
		This yields $f\ast\varphi=0.$ Indeed, assuming the contrary, 
		since $f\ast\varphi \in C_0^\infty (\Rd)$, there is a point $x_0\in\Rd$ which is the positive maximum or the negative minimum of $f\ast\varphi$. If $x_0$ is the positive maximum of $f\ast\varphi$, then $\tilde{L}(f\ast\varphi)(x_0)=\Delta(f\ast\varphi)(x_0)\leq0$ and
		\begin{align*}
			0=  (\lambda f\ast\varphi)(x_0) - \tilde{L}(f\ast\varphi)(x_0)
			\geq
			(\lambda f\ast\varphi)(x_0)
			>
			0,
		\end{align*}
		a contradiction. The case of $s_0$ being the negative minimum is handled similarly. 
Therefore $f\ast\varphi=0$ for every $\varphi\in C_c^\infty (\Rd)$, meaning $f=0$.
	\end{proof}
	
		\begin{prop}\label{prop:Laplace}
		We have  $\D_p(L)=\D(\tilde{L})$
		and $L=\tilde{L}$.
	\end{prop}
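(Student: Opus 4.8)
The plan is to combine the three facts already in hand: Lemma~\ref{lem:Lext} (so $\tilde L$ extends $L$, i.e. $\mathcal D_p(L)\subseteq\mathcal D(\tilde L)$ and $\tilde Lf=Lf$ there), Lemma~\ref{lem:Lsurj} (so $\lambda I-\tilde L$ is one-to-one on $\mathcal D(\tilde L)$ for every $\lambda>0$), and the semigroup fact recorded above that $\lambda I-L\colon\mathcal D_p(L)\to\Lp$ is a bijection for every $\lambda>0$. Since Lemma~\ref{lem:Lext} already gives the inclusion $\mathcal D_p(L)\subseteq\mathcal D(\tilde L)$ together with the agreement of the two operators on $\mathcal D_p(L)$, the only thing left to prove is the reverse inclusion $\mathcal D(\tilde L)\subseteq\mathcal D_p(L)$.

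First I would fix $\lambda>0$ (say $\lambda=1$) and take an arbitrary $f\in\mathcal D(\tilde L)$. Put $g:=\lambda f-\tilde Lf\in\Lp$. By surjectivity of $\lambda I-L$ there is $\tilde f\in\mathcal D_p(L)$ with $\lambda\tilde f-L\tilde f=g$. By Lemma~\ref{lem:Lext}, $\tilde f\in\mathcal D(\tilde L)$ and $\tilde L\tilde f=L\tilde f$, hence also $\lambda\tilde f-\tilde L\tilde f=g$. Subtracting the two identities gives $(\lambda I-\tilde L)(f-\tilde f)=0$, and Lemma~\ref{lem:Lsurj} forces $f=\tilde f$, so $f\in\mathcal D_p(L)$; moreover $\tilde Lf=\tilde L\tilde f=L\tilde f=Lf$. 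This establishes $\mathcal D(\tilde L)=\mathcal D_p(L)$ and $\tilde L=L$ simultaneously.

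I do not anticipate a genuine obstacle: this is the standard Hille--Yosida bootstrap in which surjectivity of the resolvent for the generator plus injectivity of the resolvent for the candidate extension pins down equality. The only step that draws on the preceding work is the injectivity of $\lambda I-\tilde L$ (Lemma~\ref{lem:Lsurj}), whose proof rests on the maximum principle applied to the smooth mollifications $f\ast\varphi$; with that lemma available, the argument above is a short diagram chase. Should one prefer not to cite the bijectivity of $\lambda I-L$ as a black box, one may instead exhibit the preimage explicitly as the resolvent integral $\tilde f:=\int_0^\infty e^{-\lambda t}P_tg\,\dt$ and verify $\tilde f\in\mathcal D_p(L)$ with $(\lambda I-L)\tilde f=g$ directly, but this is routine $L^p$ semigroup calculus.
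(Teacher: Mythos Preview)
Your proposal is correct and is essentially the same argument as the paper's: both combine Lemma~\ref{lem:Lext}, Lemma~\ref{lem:Lsurj}, and the bijectivity of $\lambda I-L$ to conclude that the injective extension $\lambda I-\tilde L$ of the bijection $\lambda I-L$ must coincide with it. The only difference is that you spell out explicitly the element-chasing step (produce $\tilde f$ via surjectivity, then use injectivity to force $f=\tilde f$) that the paper compresses into a single sentence.
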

	\begin{proof}
		Take any $\lambda>0$. In view of Lemma~\ref{lem:Lext} and \ref{lem:Lsurj}, the bijection\linebreak $\lambda I - L\!:\D_p(L)\to\Lp$ and its injective extension  $\lambda I -\tilde{L}\!:\D(\tilde{L})\to\Lp$ are equal,
		$\D_p(L)=\D(\tilde{L})$, $\lambda I - L=\lambda I -\tilde{L}$. Thus, $L=\tilde{L}$.
	\end{proof}


\end{document}